\documentclass[10pt]{amsart}
\usepackage{amssymb,MnSymbol}
\usepackage{amsthm,amsmath}
\usepackage{epic,cite}
\usepackage[justification=centering]{caption}
\usepackage{amssymb,MnSymbol}%,times}
\usepackage{amsthm,amsmath,enumerate,color}
\usepackage{cite}
\usepackage[utf8]{inputenc}
\usepackage{graphicx, subfig}
\usepackage{tikz}
\usepackage{caption}
\usepackage{cite}

\title[Visualization of associative quasitrivial nondecreasing operations]{Visual characterization of associative quasitrivial nondecreasing operations on finite chains}

%\author{Jimmy Devillet}
%\address{Mathematics Research Unit, University of Luxembourg, Maison du Nombre, 6, avenue de la Fonte, L-4364 Esch-sur-Alzette, Luxembourg}
%\email{jimmy.devillet[at]uni.lu}

\author{Gergely Kiss}
\address{Mathematics Research Unit, University of Luxembourg, Maison du Nombre, 6, avenue de la Fonte, L-4364 Esch-sur-Alzette, Luxembourg}
\email{gergely.kiss[at]uni.lu}
\keywords{associativity, bisymmetry, quasitriviality,
characterization.}

\subjclass[2010]{Primary 20N15, 39B72; Secondary 20M14.}

\usepackage{amssymb,MnSymbol}%,times}
\usepackage{amsthm,amsmath,enumerate,color}
\usepackage{cite}
\usepackage[utf8]{inputenc}
\usepackage{graphicx, subfig}
\usepackage{tikz}
\usepackage{caption}
\usepackage{cite}

\theoremstyle{plain}
\newtheorem{theorem}{Theorem}[section]% Supprimer [section] pour une num?rotation lin?aire
\newtheorem{lemma}[theorem]{Lemma}
\newtheorem{proposition}[theorem]{Proposition}
\newtheorem{corollary}[theorem]{Corollary}

\newtheorem{obs}[theorem]{Observation}
\theoremstyle{definition}
\newtheorem{definition}[theorem]{Definition}
\newtheorem{example}[theorem]{Example}

\theoremstyle{remark}
\newtheorem{remark}{Remark}

\begin{document}
%\title{Visual characterization of associative quasitrivial monotone functions defined on a finite chain}
%\author{Jimmy Devillet and Gergely Kiss}

%\author[G. Kiss]{Gergely Kiss}
%\address{Mathematics Research Unit, FSTC, University of Luxembourg \\
%6, rue Coudenhove-Kalergi, L-1359 Luxembourg, Luxembourg}
%\email{gergely.kiss[at]uni.lu }

%\author[G. Somlai]{G\'abor Somlai}
%\address{Mathematics Research Unit, FSTC, University of Luxembourg \\a
%6, rue Coudenhove-Kalergi, L-1359 Luxembourg, Luxembourg}
%\email{jean-luc.marichal[at]uni.lu }

%

\date{\today}

%\date{\today,\currenttime}

\maketitle

\begin{abstract}
In this paper we provide visual characterization of associative
quasitrivial nondecreasing operations on finite chains. We also
provide a characterization of bisymmetric quasitrivial nondecreasing
binary operations on finite chains. Finally, we estimate the number
of operations belonging to the previous classes.
\end{abstract}

%%%%%%%%%%%%%%%%%%%%%%%%%%%%%%%%%%%%%%%%%%%%%%%%%%%%%%%%%%%%%%%%%%%%%%%%%%%%%%%%
\section{Introduction}\label{s1}
%In this paper we present a characterization of associative quasitrivial monotone binary operations on finite chains. The analogue result for $[0,1]$ was made by \cite{Martin2003}.
%In the symmetric case the following result was proved. ..

%In \cite[Theorem 3.3 and Corollary 3.4]{DKM}  the following results were proved.
%\begin{theorem}\label{propQS}
%Let $X$ be a totally ordered set. Let $F\colon X^k\to X$ be a
%quasitrivial, symmetric, nondecreasing, associative operation. Then
%$F$ is reducible. More precisely, $F$ is derived from $G\colon
%X^2\to X$ defined by \begin{equation}\label{eqxy}
%G(x,y)=F((k-1)\cdot x,y)=F(x,(k-1)\cdot y).
%\end{equation}
%In this case $F$ can be calculated by the following %formula.
%\begin{equation}\label{eqext1}
%F(x_1, \dots, x_k)=G(\wedge_{i=1}^k x_i, \vee_{i=1}^k %x_i).
%\end{equation}
%\end{theorem}
%It is easy to see that operation $G$ defined by %\eqref{eqxy} is also
%quasitrivial, symmetric and nondecreasing.
%e note that as in \cite[Theorem 3.3]{DKM} it can be %shown  using symmetry
%that in this case
%\begin{equation}
%F(x_1, \dots, x_k)=G(\wedge_{i=1}^k x_i, \vee_{i=1}^k %x_i).
%\end{equation}
%We note that equation \eqref{eqext1} means that
The study of aggregation operations defined on finite ordinal scales
(i.e, finite chains) have been in the center of interest in the last
decades, e.g., \cite{C,B,Fo,Ma1, Ma2, Ma3, Ma4, May2, May, Li, RA,
Su1, Su2}. Among these operations, discrete uninorms has an
important role in fuzzy logic and decision making \cite{Be, B1,  B2,
F}.

In this paper we investigate associative quasitrivial nondecreasing
operations on finite chains.
In \cite{Beats2009, Jimmy2017, DKM} idempotent discrete uninorms (i.e. idempotent symmetric nondecreasing associative operations with neutral elements defined on finite chains) have been characterized. Since every idempotent uninorm is quasitrivial (see e.g. \cite{Czogala1984}), in some sense this paper is a continuation of these works %\cite{Beats2009, Jimmy2017, DKM}
where we eliminate the assumption of symmetry of the operations.

 %that is motivated by \cite{Beats2009, Jimmy2017, DKM}.
%Since every idempotent uninorms are quasitrivial, in some sense this paper is a continuation of the works \cite{Beats2009, Jimmy2017, DKM} for finite chains where we eliminate the assumption of symmetry of the operations.

Now we recall the analogue results for the unit interval $[0,1]$ as
follows. Czoga\l a-Drewniak proved in \cite{Czogala1984} that an
associative monotonic idempotent operation with neutral element is a
combination of minimum and maximum, and thus these are quasitrivial.
Martin, Mayor and Torrens in \cite{Martin2003} gave a complete
characterization of associative quasitrivial nondecreasing
operations on $[0,1]$. A refinement of their argument can be found
in \cite{Beats2010}. (For the multivariable generalization of these
results see \cite{GG2016}.) We note that in \cite{Beats2009} the
analogue of the result of Czoga\l a-Drewniak for finite chains has
been provided assuming of symmetry of such operations.

The study of $n$-ary operations $F\colon X^n\to X$ satisfying the associativity property (see Definition \ref{dbase}) stemmed from the work of D\"ornte \cite{Dor28} and Post \cite{Pos40}. In \cite{DudMuk96, DM} the reducibility (see Definition \ref{defred}) of associative $n$-ary operations have been studied by adjoining neutral elements. In \cite{A} a complete characterization of quasitrivial associative $n$-ary operations have been presented. In \cite{DKM} the quasitrivial symmetric nondecreasing associative $n$-ary operations defined on chains have been characterized. Recently, in \cite{KS} it was proved that associative idempotent nondecreasing $n$-ary operations defined on any chain are reducible. Using reducibility (see Theorem \ref{t2}) a characterization of associative quasitrivial nondecreasing $n$-ary operations for any $2\le n\in \mathbb{N}$ can be obtained automatically by %  Since every quasitrivial operation is idempotent,
a characterization of associative quasitrivial nondecreasing binary
operations.

The paper is organized as follows. In Section \ref{s2} we present the most important definitions. %At the beginning of Section \ref{s3} by recalling \cite[Theorem 4.8]{KS}, which states that every associative idempotent nondecreasing $n$-ary operation is derived from a binary one. Thus,
In Section \ref{s3}, we recall (\cite[Theorem 4.8]{KS}) the
reducibility of associative idempotent nondecreasing $n$-ary
operations and, hence, in the sequel we mainly focus on the binary
case. We introduce the basic concept of visualization for
quasitrivial monotone binary operations and present some preliminary
results due to this concept. Here we discuss an important visual
test of non-associativity (Lemma \ref{lpic}). Section \ref{s4} is
devoted to the visual characterization of associative quasitrivial
nondecreasing operations with so-called 'downward-right paths'
(Theorems \ref{tfo1} and \ref{thmchar}). We also present an
Algorithm which provides the contour plot of any associative
quasitrivial nondecreasing operation.
In Section \ref{s5} we characterize the bisymmetric quasitrivial nondecreasing binary operations (Theorem \ref{tbfo1}). %In Subsection \ref{s5.1} we show that the analogue of \cite[Theorem 4.8]{KS} does not hold for bisymmetric quasitrivial nondecreasing $n$-ary operations (Example \ref{exbi}). This answers the question whether a quasitrivial, bisymmetric operation is associative (see \cite[Remark 10.(b)]{DKM}). In Subsection \ref{s5.2} we provide some special classes of $n$-ary bisymmetric operations where a reduction to the binary case can be guaranteed (Corollary \ref{cbired}).
In Section \ref{s6} we calculate the number of associative
quasitrivial nondecreasing operations defined on a finite chain of
given size with and also without the assumption of the existence of
neutral elements (Theorem \ref{tnumb}).
We get similar estimations for the number of bisymmetric  quasitrivial nondecreasing binary operations defined on a finite chain of given size. %with and also without the assumption of neutral elements
(Proposition \ref{binumb}). In Section \ref{s7} we present some
problems for further investigation. Finally, using a slight
modification of the proof of \cite[Theorem 3.2]{KS}, in the Appendix
we show that every associative quasitrivial monotonic $n$-ary
operations are nondecreasing.

\section{Definition}\label{s2}
%\section{Definition of the binary case}\label{s2}
Here we present the basic definitions and some preliminary results.
First we introduce the following simplification. For any integer
$l\geq 0$ and any $x\in X$, we set $l\cdot x= x,\ldots,x$ ($l$
times). For instance, we have $F(3\cdot x_1,2\cdot
x_2)=F(x_1,x_1,x_1,x_2,x_2)$.
\begin{definition}\label{dbase}
Let $X$ be an arbitrary nonempty set. A operation  $F:X^n\to X$ is
called
\begin{itemize}
\item \emph{idempotent} if $F(n\cdot x)=x$ for all $x\in X$;
\item \emph{quasitrivial} (or \emph{conservative}) %\emph{selective})
if $$F(x_1,\ldots,x_n)\in\{x_1,\ldots,x_n\}$$ for all
$x_1,\ldots,x_n\in X$;
\item \emph{($n$-ary) associative} if
\begin{eqnarray*}\label{assoc}
\lefteqn{F(x_1,\ldots,x_{i-1},F(x_i,\ldots,x_{i+n-1}),x_{i+n},\ldots,x_{2n-1})}\\
&=&
F(x_1,\ldots,x_i,F(x_{i+1},\ldots,x_{i+n}),x_{i+n+1},\ldots,x_{2n-1})
\end{eqnarray*}
for all $x_1,\ldots,x_{2n-1}\in X$ and all $i\in \{1, \dots, n-1\}$;
\item\emph{($n$-ary) bisymmetric} if
$$
F(F(\mathbf{r}_1),\ldots,F(\mathbf{r}_n)) ~=~
F(F(\mathbf{c}_1),\ldots,F(\mathbf{c}_n))
$$
for all $n\times n$ matrices $[\mathbf{r}_1 ~\cdots
~\mathbf{r}_n]=[\mathbf{c}_1 ~\cdots ~\mathbf{c}_n]^T\in X^{n\times
n}$.
%\end{equation}
\end{itemize}

We say that $F:X^n\to X$ has a {\it neutral element} $e\in X$ if for
all $x\in X$ and all $i\in \{1,\dots, n\}$
$$
F((i-1)\cdot e,x,(n-i)\cdot e) ~=~ x.
$$

Hereinafter we simply write that an $n$-ary operation is associative
or bisymmetric if the context clarifies the number of its variables.
We also note that if $n=2$ we get the binary definition of
associativity, quasitriviality, idempotency, and neutral element
property.

%Let $(X, \ge)$ be a chain. An  operation  $F:X^2\to X$ is called

%\end{definition}

Let $(X,\leq)$ be a nonempty chain (i.e, a totally ordered set). An
operation $F\colon X^n\to X$ is said to be
\begin{itemize}
\item {\it nondecreasing} (resp. {\it nonincreasing}) %(w.r.t.\ $\leq$)
if $$F(x_1,\ldots,x_n)\leq F(x'_1,\ldots,x'_n) \ \ \ (\textrm{resp.
} F(x_1,\ldots,x_n)\geq F(x'_1,\ldots,x'_n))$$ whenever $x_i\leq
x'_i$ for all $i\in \{1,\dots, n\}$,
\item {\it monotone in the $i$-th variable} if
for all fixed elements $a_1,\dots a_{i-1}, a_{i+1}, \dots, a_n$ of
$X$, the $1$-ary function defined as $$f_i(x):=F(a_1,\dots,
a_{i-1},x,a_{i+1},\dots, a_n)$$ is nondecreasing or nonincreasing.
\item {\it monotone} if it is
monotone in each of its variables.

% \item {\it nonincreasing} (or {\it order-reversing}) %(w.r.t.\ $\leq$)
%if $F(x_1,\ldots,x_n)\leq F(x'_1,\ldots,x'_n)$ whenever $x_i\leq x'_i$ for all $i\in \{1,\dots, n\}$.
%\end{itemize}
\end{itemize}
\end{definition}

\begin{definition}\label{defred}

%\begin{enumerate}
%\item
We say that $F:X^n\to X$ {\it is derived from} a binary operation
$G:X^2\to X$ if $F$ can be written of the form
\begin{equation}\label{eqjdef}
    F(x_1, \dots, x_n)=x_1\circ \dots
\circ x_n,
\end{equation} where $x\circ y=G(x,y)$. It is easy to see that $G$ is associative (and $F$ is $n$-ary associative) if and only if \eqref{eqjdef}
is well-defined. If such a $G$ exists, then we say that $F$ is {\it
reducible}.
%We also say that $F:$
%\item
%\end{enumerate}
\end{definition}

We denote the {\it diagonal} of $X^2$ by $\Delta_X=\{(x,x):x\in
X\}$.

\begin{definition}
Let $L_k$ denote $\{1,\dots, k\}$ endowed with the natural ordering
$(\le)$.
\end{definition}

Then $L_k$ is a finite chain.  Moreover, every finite chain with $k$
element can be identified with $L_k$ and the domain of an
$n$-variable operation defined on a finite chain can be identified
with $\underbrace{L_k\times\cdots\times L_k}_n=(L_k)^n$ for some
$k\in \mathbb{N}$.

For an arbitrary poset $(X, \le)$ and $a\le b\in X$ we denote the elements between $a$ and $b$ by $[a,b]\subseteq X$. In particular, for $L_k$ % this is the set of natural numbers between $a$ and $b$.
%Formally,
$$[a,b]=\{m\in L_k: a \le m \le b\}.$$ %\textrm{ or } a \le s \le b\}.$

We also introduce the lattice notion of the minimum ($\wedge$) and
the maximum ($\vee$) as follows
$$x_1\wedge\dots\wedge x_n=\wedge_{i=1}^n x_i=\min\{x_1, \dots, x_n\},$$
$$x_1\vee\dots\vee x_n=\vee_{i=1}^n x_i=\max\{x_1, \dots, x_n\}.$$

The binary operations Proj$_x$ and Proj$_y$ denote the projection to
first and the second coordinate, respectively. Namely, Proj$_x(x, y)
= x$ and Proj$_y(x, y) = y$ for all $x, y \in X.$

\section{Basic concept and preliminary results}\label{s3}
The following general result was published as \cite[Theorem 4.8]{KS}
recently.

\begin{theorem}\label{t2}
Let $X$ be a nonempty chain and $F:X^n\to X$ $(n\ge 2)$ be an
associative idempotent nondecreasing operation. Then there exists
uniquely an associative idempotent nondecreasing binary operation
$G:X^2\to X$ such that $F$ is derived from $G$. Moreover, $G$ can be
defined by
\begin{equation}\label{eqgf}
G(a,b)=F(a, (n-1)\cdot b)=F((n-1)\cdot a, b)~~~~(a,b\in
X).\end{equation}
\end{theorem}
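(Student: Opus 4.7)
The plan is to establish, in order: (a) the two expressions defining $G(a,b)$ coincide; (b) $G$ is idempotent and nondecreasing; (c) $F$ is derived from this $G$ in the sense of~\eqref{eqjdef}, whence associativity of $G$ follows; and (d) $G$ is uniquely determined by $F$. The three ingredients I expect to use throughout are $n$-ary associativity, idempotency in the form $F(n\cdot x) = x$, and the monotonicity hypothesis.

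Step~(a) is the technical heart. Write $c := F(a,(n-1)\cdot b)$ and $d := F((n-1)\cdot a, b)$, and assume without loss of generality $a \le b$. The chain $a \le d \le c \le b$ follows from monotonicity, since $F(k\cdot a, (n-k)\cdot b)$ is nonincreasing in $k$. For the reverse inequality I would apply $n$-ary associativity to two carefully chosen $(2n-1)$-tuples. On the tuple of $n$ copies of $a$ followed by $(n-1)$ copies of $b$, placing the inner $F$ at position~$1$ collapses by idempotency to $c$, whereas at position~$2$ it gives $F(a, d, (n-2)\cdot b)$, yielding $F(a, d, (n-2)\cdot b) = c$. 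On the tuple of $(n-1)$ copies of $a$ followed by $n$ copies of $b$, positions~$2$ and~$n$ produce $F(a, e, (n-2)\cdot b)$ and $d$ respectively, where $e := F((n-2)\cdot a, 2\cdot b) \ge d$ by monotonicity. Then $d = F(a, e, (n-2)\cdot b) \ge F(a, d, (n-2)\cdot b) = c$, and combined with $d \le c$ we conclude $c = d$.

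Step~(b) is immediate: $G(a,a) = F(n\cdot a) = a$, and $G$ inherits coordinatewise monotonicity from $F$. The main obstacle is step~(c): I would prove by induction on $k \in \{2,\ldots,n\}$ that the left-associated iterate
\[
L_k(x_1,\dots,x_k) \;:=\; G\bigl(\dots G(G(x_1,x_2),x_3)\dots,x_k\bigr)
\]
equals $F((n-k+1)\cdot x_1, x_2, \ldots, x_k)$; specializing at $k = n$ then gives $F(x_1, \ldots, x_n) = L_n$. The inductive step reduces to showing
\[
F\bigl((n-1)\cdot F((n-k+1)\cdot x_1, x_2, \ldots, x_k),\, x_{k+1}\bigr) \;=\; F\bigl((n-k)\cdot x_1, x_2, \ldots, x_{k+1}\bigr),
\]
which I would handle by expanding a single inner copy and using $n$-ary associativity to splice it into the outer $F$, then absorbing the remaining redundant inner copies by idempotency. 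Once $F = L_n$, $n$-ary associativity of $F$ forces every parenthesization of the iterated $G$-expression to agree, so $G$ is binary associative.

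Finally, step~(d) is a direct collapse: if $F$ is derived from any associative idempotent binary $G'$, then the iterated $G'$-expression evaluated on $(n-1)$ copies of $a$ followed by $b$ reduces, via $G'$-idempotency absorbing the prefix of $a$'s, to $G'(a,b)$; hence $G'(a,b) = F((n-1)\cdot a, b) = G(a,b)$, so $G' = G$.
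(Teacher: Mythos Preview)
The paper does not supply a proof of Theorem~\ref{t2}; it is quoted verbatim as \cite[Theorem~4.8]{KS} and used as an imported black box to reduce the $n$-ary problem to the binary one. So there is no in-paper argument to compare against.

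On the substance of your sketch: steps~(a), (b), and~(d) are correct, and your proof of~(a) is clean. In fact once~(a) is in hand, associativity of $G$ follows much more directly than you suggest: apply $n$-ary associativity at positions~$1$ and~$n$ on the $(2n-1)$-tuple $((n-1)\cdot a,\,b,\,(n-1)\cdot c)$, and read off $G(G(a,b),c)=G(a,G(b,c))$ immediately from the two formulas in~\eqref{eqgf}. So deferring associativity of $G$ until after step~(c) is unnecessary.

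Step~(c), however, is where the genuine difficulty of \cite{KS} lives, and your description does not close it. Concretely, write $y=F((n-k+1)\cdot x_1,x_2,\ldots,x_k)$ and $z=F((n-k)\cdot x_1,x_2,\ldots,x_{k+1})$. ``Expanding one copy of $y$ and splicing'' on the $(2n-1)$-tuple $\bigl((n-2)\cdot y,\,(n-k+1)\cdot x_1,x_2,\ldots,x_k,x_{k+1}\bigr)$ yields
\[
F\bigl((n-1)\cdot y,\,x_{k+1}\bigr)\;=\;F\bigl((n-2)\cdot y,\,x_1,\,z\bigr),
\]
and you are still carrying $n-2$ copies of $y$; idempotency only collapses $F(n\cdot t)$, not mixed tuples, so there is no ``absorption by idempotency'' move that eliminates them. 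Iterating the splice does not visibly terminate at~$z$ either. This reduction of $F$ to an iterated $G$-product is exactly the content of the reducibility theorem in \cite{KS}, whose proof uses additional order-theoretic structure beyond pure syntactic manipulation; your outline does not yet supply a substitute for that.
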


\begin{remark}\label{rem2}
By the definition \eqref{eqgf} of $G$, it is clear that if $F$ is
quasitrivial, then $G$ is also.
\end{remark}

According to Theorem \ref{t2} and Remark \ref{rem2},  a characterization of associative quasitrivial nondecreasing binary operations automatically implies a characterization for the $n$-ary case. Therefore, from now on we deal with the binary case ($n=2$). %That is what we do in Section \ref{s4} for binary operations that are defined on finite chains.

\subsection{Visualization of binary operations}\label{s3.1}
In this section we prove and reprove basic properties of
quasitrivial associative nondecreasing binary operations in the
spirit of visualization.

\begin{obs}\label{l1}
Let $X$ be a nonempty chain and let $F:X^2\to X$ be a quasitrivial
monotone operation. If $F(x,t)=x$, then $F(x,s)=x$ for every $s\in
[x\wedge t, x\vee t]$ . Similarly, if $F(x,t)=t$, then $F(s,t)=t$
for every $s\in [x\wedge t, x\vee t]$.
\end{obs}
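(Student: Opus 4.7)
Quasitriviality of $F$ forces the idempotence $F(x,x)=x$ and, for every $s\in X$, the dichotomy $F(x,s)\in\{x,s\}$. Monotonicity of $F$ in its second variable means that for the fixed value $x$, the one-variable section $f(y):=F(x,y)$ is either nondecreasing or nonincreasing on $X$. These two facts are all that is needed.

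For the first statement, I would argue by contradiction. The boundary cases are immediate: if $s=x$ then $F(x,s)=F(x,x)=x$, and if $s=t$ then $F(x,s)=F(x,t)=x$ by hypothesis. So we may assume that $s$ lies strictly between $x$ and $t$. If $F(x,s)\neq x$, then quasitriviality forces $F(x,s)=s$, and the section $f$ takes the three values
$$f(x)=x,\qquad f(s)=s,\qquad f(t)=x,$$
with $s\neq x$. Reading these outputs along whichever of the two orderings ($x<s<t$ or $t<s<x$) applies, the sequence becomes $x,s,x$: the function first leaves $x$ and then returns. Both the nondecreasing and nonincreasing options for $f$ would force $x\leq s\leq x$ (respectively $x\geq s\geq x$), hence $s=x$, contradicting $s\neq x$. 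Therefore $F(x,s)=x$.

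The second statement follows by the symmetric argument applied to the one-variable section $g(y):=F(y,t)$, using monotonicity of $F$ in its first variable.

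The only real subtlety is that the paper's notion of \emph{monotone} permits the monotonicity type (nondecreasing vs.\ nonincreasing) to depend on which variables are fixed; but the contradiction above rules out both types simultaneously, so this causes no trouble. Apart from that, the argument is just a short case analysis, so I do not anticipate a significant obstacle.
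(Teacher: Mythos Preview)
Your argument is correct. The paper states this as an \emph{observation} and gives no explicit proof; your contradiction argument using idempotence, the quasitrivial dichotomy $F(x,s)\in\{x,s\}$, and the monotonicity of the section $f(y)=F(x,y)$ is exactly the elementary reasoning the paper is taking for granted, and you handle the ``monotone may mean either direction'' subtlety cleanly.
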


%\begin{proof} Obvious.
%The operation $F$ is quasitrivial, thus idempotent. Monotonicity implies the statement.% as follows.
%
%Without loss of generality we can assume that $x\le t$. %\in X$.
%Using monotonicity for every $s\in [x,t]$ one of the following holds  \begin{align*}
%x=F(x,t)= &F(x,s)= F(x,x),  \\  &\textrm{or}\\
%t=F(x,t)= &F(s,t)= F(t,t).
%\end{align*}
%\end{proof}
A {\it level-set} of $F$ is a set of vertices of $L_k^2$ where $F$
has the same value. The {\it contour plot} of $F$ can be visualized
by connecting the closest elements of the level-sets of $F$ by line
segments.
According to Observation \ref{l1}, this contour plot can be drawn using only horizontal and vertical line segments starting from the diagonal (as in Figure 1.). %Thus these segments represent the level sets of $F$. This picture that we call the $\it{contour plot}$ of $F$.
%These lines (restricted to $L_k$) represent the different values of $F$.
It is clear that these lines do not cross each other by the
monotonicity of $F$.
\begin{figure}[ht]
\centering
\begin{tikzpicture}
 %C/.style = {circle,thick,draw, inner sep=2pt},
\draw (0,0)-- (0,4)--(4,4)--(4,0) -- cycle; \draw (0,0)--(4,4);
\draw[very thick] (1,1)--(3,1);
%\draw[very thick] (2,2)--(1,2);
\draw[very thick] (3,3)--(3,2);
%\node[] at (1,-0.3) {x};
\node[] at (-0.3,1) {y};
%\node[] at (2,-0.3) {y};
\node[] at (-0.3,2) {z}; \node[] at (3,-0.3) {x};
%\node[] at (-0.3,3) {z};
\draw[step=1cm, gray, very thin](0,0) grid (4,4);
%\node[] at (2,-1) {Case 1/b: $y<z<x$ };
%\filldraw (1,1) circle[radius=1.5pt];
\end{tikzpicture}
\caption{$F(x,y)=y$ and $F(x,z)=x$ }
  \label{figbase}
  \end{figure}
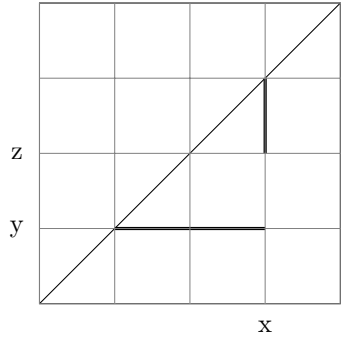

%Here we note that the previous observation can be deduced easily from \cite[Proposition 6]{Jimmy2017}.
As a consequence we get the following.
\begin{corollary}\label{cnem}
Let $X$ be a nonempty chain and $F:X^2 \to X$ be a quasitrivial
operation. $$F \textrm{ is monotone } \Longleftrightarrow F \textrm{
is nondecreasing}.$$
\end{corollary}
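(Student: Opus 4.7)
The $(\Leftarrow)$ direction is immediate from the definitions. For the nontrivial $(\Rightarrow)$ direction, the plan is to show that, under quasitriviality, monotonicity of every one-variable section already forces each section to be nondecreasing; since separate nondecreasingness in each variable implies joint nondecreasingness for a binary operation, this suffices.

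To prove that each section of the form $f_a(y)=F(a,y)$ is nondecreasing, I would argue by contradiction. Suppose $f_a$ is monotone but not nondecreasing; then there exist $x_1<x_2$ with $F(a,x_1)>F(a,x_2)$. Quasitriviality gives $F(a,x_i)\in\{a,x_i\}$, and inspecting the four possibilities for the pair $(F(a,x_1),F(a,x_2))$ leaves exactly two consistent configurations:
\begin{enumerate}
\item[(A)] $x_1<x_2<a$ with $F(a,x_1)=a$ and $F(a,x_2)=x_2$;
\item[(B)] $a<x_1<x_2$ with $F(a,x_1)=x_1$ and $F(a,x_2)=a$.
\end{enumerate}

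In case (A), I would apply the first half of Observation \ref{l1} to $F(a,x_1)=a$ (taking $x=a$, $t=x_1$); since $x_2\in[x_1,a]$, this yields $F(a,x_2)=a$, contradicting $F(a,x_2)=x_2<a$. In case (B), I would apply the same half of the observation to $F(a,x_2)=a$ (taking $x=a$, $t=x_2$); since $x_1\in[a,x_2]$, this gives $F(a,x_1)=a$, contradicting $F(a,x_1)=x_1>a$. An entirely symmetric argument, using the second half of Observation \ref{l1}, rules out a nonincreasing section of the form $g_b(x)=F(x,b)$.

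Once every vertical and horizontal section is nondecreasing, the conclusion follows immediately: for $x\le x'$ and $y\le y'$ one has
\[
F(x,y)\le F(x',y)\le F(x',y'),
\]
so $F$ is (jointly) nondecreasing. The plan contains no real obstacle; the whole force of the argument is the case reduction from quasitriviality and the observation that Observation \ref{l1} requires only monotonicity, not nondecreasingness, and hence is available here.
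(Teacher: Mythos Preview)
Your proof is correct and follows essentially the same approach as the paper: both arguments reduce to showing that each one-variable section is nondecreasing, using Observation~\ref{l1} (which indeed requires only monotonicity) together with the quasitriviality constraint $F(a,y)\in\{a,y\}$. The paper's version is terser---it simply asserts the inequalities $F(s,x)\le F(t,x)$ and $F(x,t)\le F(s,t)$ for $s\in[x,t]$ as direct consequences of Observation~\ref{l1} and quasitriviality---while you spell out the same content through an explicit contradiction argument with case analysis, but the underlying idea is identical.
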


\begin{proof}
We only need to prove that every monotone quasitrivial operation is
nondecreasing.

As an easy consequence of Observation \ref{l1} and the
quasitriviality  of $F$, we have $F(s,x)\le F(t,x)$ and $F(x,t)\le
F(s,t)$ for any $x,s,t\in X$ that satisfies $s\in[x,t]$.
%If $x\ge t$ then $F(x)$
This implies that $F$ is nondecreasing in the first variable.
Similar argument shows the statement for the second variable.
%Let $F$ be decreasing operation fixing the first variable in $x\in L_k$. Thus, there exist $y,z\in L_k$ $(y<z)$ and $F(x,y)>F(x,z)$.
%By the previous observation it is clear that each element  $(x,y)\in L_k\times L_k$ is connected to the diagonal with a horizontal or a vertical line that does not cross each other. %This means that $F$ is constant in $(x,y)$ in one of its variables. Without loss of generality we assume that $x<y$. Let $$s=\max{F()}$$Thus it cannot be nondecreasing.
%Thus for any $s\in [x,t]$ $F(x,s)=x$ or ($F(x,s)=s$ and $F(x,t)=t$). This simple gives that $F$ cannot be decreasing. Indeed,
%Now we show that every coordinate operation if $F$ is nondecreasing that
%This also finishes the proof of the statement.

%If $F(x,t)=x$, then  $F(x,s)=x$ by Observation \ref{l1}.  In the case $F(x,t)=t$, we know that $F(x,s)=s$ or $x$ thus $F(x,s)\le F(x,t)$. The case $x>s>t$ can be handled similarly.
%The same argument works for the second variable.
\end{proof}
%We note that the analogue statement holds for $n\ge 3$ (see ...).

\begin{remark}\label{rnem}
%Proposition
The analogue of Corollary \ref{cnem} holds whenever $n> 2$. The
proof is essentially the same as the proof of \cite[Theorem
3.10]{KS}. Thus we present it in Appendix A.
\end{remark}
In the sequel we are dealing with associative, quasitrivial and
nondecreasing operations.
%%%%%% Ide kell meg a kesobbi hivatkozas

There are several know forms of the following proposition. This type
of results was first proved in  \cite{Martin2003}. The form as
stated here is \cite[Proposition 18]{Jimmy2017}.

 \begin{proposition}\label{prop:eqv}
 Let $X$ be an arbitrary nonempty set and %let $\Delta_X=\{(x,x):x\in X\}$.
 let $F:X^2 \to X$ be a quasitrivial operation. Then the following assertions are equivalent.
 \begin{enumerate}
     \item[(i)] $F$ is {\bf not} associative.
     \item[(ii)] There exist pairwise distinct $x,y,z\in X$ such that $F(x,y), F(x,z), F(y,z)$ are pairwise distinct.
     \item[(iii)] There exists a rectangle in $X^2$ such that one of the vertices is on $\Delta_X$ and the three remaining vertices are in $X^2\setminus \Delta_X$ and pairwise disconnected.
 \end{enumerate}
 \end{proposition}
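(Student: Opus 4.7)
The plan is to prove the cycle $(i)\Rightarrow(ii)\Rightarrow(iii)\Rightarrow(i)$. The two steps $(ii)\Leftrightarrow(iii)$ are essentially a change of coordinates in the $X^2$ grid: given pairwise distinct $x,y,z$ as in $(ii)$, the rectangle with vertices $(y,y),(x,y),(y,z),(x,z)$ has its first corner on $\Delta_X$, the other three off $\Delta_X$ by pairwise distinctness, and their $F$-values are exactly the three pairwise distinct values supplied by $(ii)$, so they lie in three different level sets (hence are pairwise disconnected in the contour plot). Conversely, if a rectangle with diagonal corner $(a,a)$ and off-diagonal corners $(a,c),(b,a),(b,c)$ has pairwise disconnected $F$-values, relabelling $x=b$, $y=a$, $z=c$ immediately produces the triple required by $(ii)$.

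The core of the argument is $(i)\Rightarrow(ii)$. Starting from $(a,b,c)$ with $F(F(a,b),c)\ne F(a,F(b,c))$, I first show that $a,b,c$ must be pairwise distinct: quasitriviality yields $F(u,u)=u$, so in each degenerate case $a=b$, $b=c$, or $a=c$ a short case analysis on the values of $F$ on the relevant pairs (in the $a=c$ case one splits on $F(a,b),F(b,a)\in\{a,b\}$) collapses both sides of the associativity equation. Assuming distinctness, I split on $(F(a,b),F(b,c))\in\{a,b\}\times\{b,c\}$: two of the four combinations make the two sides coincide regardless of $F(a,c)$ (so they are excluded by non-associativity), while each of the remaining two combinations pins $F(a,c)$ to the unique element of $\{a,c\}$ that breaks equality, and a direct check shows that in both cases the resulting triple $\{F(a,b),F(a,c),F(b,c)\}$ is a permutation of $\{a,b,c\}$, hence pairwise distinct.

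For $(iii)\Rightarrow(i)$ I run the same quasitriviality table in reverse: given the rectangle of $(iii)$ with three distinct $F$-values on $(a,c),(b,a),(b,c)$, the arguments $a,b,c$ are automatically pairwise distinct, and evaluating $F(F(b,a),c)$ against $F(b,F(a,c))$ by cases on $F(b,a)\in\{a,b\}$ and $F(a,c)\in\{a,c\}$ exhibits a triple on which associativity fails (the distinctness of the three values on the rectangle's corners is precisely what prevents the two sides from collapsing).

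The main obstacle is the bookkeeping in $(i)\Rightarrow(ii)$: one must rule out the three degenerate coincidences among $a,b,c$ cleanly, and then verify that in each of the two surviving branches of the $(F(a,b),F(b,c))$ case split the forced value of $F(a,c)$ really yields three pairwise distinct outputs rather than merely two differing ones. The other implications are essentially geometric bookkeeping on the contour plot picture from Figure~\ref{figbase}.
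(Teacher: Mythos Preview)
Your proof is correct. Note, however, that the paper does not actually supply its own proof of this proposition: it is quoted as \cite[Proposition 18]{Jimmy2017} (with the remark that results of this type go back to \cite{Martin2003}), so there is no in-paper argument to compare against. Your cycle $(i)\Rightarrow(ii)\Rightarrow(iii)\Rightarrow(i)$ with the quasitriviality case analysis is precisely the standard elementary proof; the only point worth making explicit when you write it out in full is that in the $(iii)\Rightarrow(i)$ step the two ``bad'' subcases $F(b,a)=F(a,c)=a$ and $(F(b,a),F(a,c))=(b,c)$ are ruled out not by the associativity computation but directly by the pairwise-distinctness of the three corner values, since in the first both values equal $a$ and in the second $F(b,c)\in\{b,c\}$ must coincide with one of them.
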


%\begin{remark}\label{remiv}
%\end{remark}
%\end{remark}
%For the sake of completeness we present also a short proof.
%\begin{lemma}\label{l2}
%Let $F:X^2\to X$ be a
% nondecreasing quasitrivial operation , then $F$ is not associative %, i.e:
%\begin{equation}\label{cdt:06}
%   F(F(x,y),z)=F(x,F(y,z))
%\end{equation}
%if and only if there are some $x,y,z\in X$ which satisfies
%    \begin{equation}\label{eq1}
%    F(x,y)=x, F(x,z)=z, F(y,z)=y,
%    \end{equation}
%    or
%    \begin{equation}\label{eq2}
%    F(x,y)=y, F(y,z)=z, F(x,z)=x.
%    \end{equation}
%\end{lemma}
%\begin{proof}
%($\Longrightarrow$): We prove it by contraposition. Direct calculation shows that if there exist such $x,y,z\in X$ satisfying \eqref{eq1} and \eqref{eq2}, then $F$ is not associative. \\
%($\Longleftarrow$): If there are no such $x,y,z\in X$ satisfying \eqref{eq1} and \eqref{eq2} then
%one of the following cases holds \begin{align*}
%F(x,y)=x, F(x,z)=x;\\
%F(x,y)=y, F(y,z)=y;\\
%F(x,z)=z, F(y,z)=z.
%\end{align*}
%In the first case we get
%F(F(x,y),z)=F(x,z)=x \text{ and }  F(x,F(y,z))=F(x, y\text{ or } z)=x.\]
% In the other cases similar argument shows that $F$ is associative.
%\end{proof}

Now %for nondecreasing, quasitrivial operations
we present a visual version of the % which is also an equivalent form of the
previous statement if $F$ is nondecreasing.
\begin{lemma}\label{lpic}
Let $X$ be chain and $F:X^2\to X$ a quasitrivial, nondecreasing
operation. Then $F$ is {\bf not} associative if and only if there
are pairwise distinct elements $x,y,z\in X$  that give one of the
following pictures.
\begin{figure}[ht]
\centering
\begin{tikzpicture}
 %C/.style = {circle,thick,draw, inner sep=2pt},
\draw (0,0)-- (0,2)--(2,2)--(2,0) -- cycle; \draw (0,0)--(2,2);
\draw[very thick] (0.4,0.4)--(1,0.4); \draw[very thick]
(1,1)--(1,1.7); \draw[very thick] (1.7,1.7)--(1.7,0.4); \node[] at
(0.4,-0.3) {z}; \node[] at (-0.3,0.4) {z}; \node[] at (1,-0.3) {x};
\node[] at (-0.3,1) {x}; \node[] at (-0.3,1.7) {y}; \node[] at
(1.7,-0.3) {y}; \node[] at (1,-1) {(a)};

\draw (3,0)-- (3,2)--(5,2)--(5,0) -- cycle; \draw (3,0)--(5,2);
\draw[very thick] (3.4,0.4)--(3.4,1.7); \draw[very thick]
(4,1)--(4,0.4); \draw[very thick] (4.7,1.7)--(4,1.7); \node[] at
(3.4,-0.3) {y}; \node[] at (2.7,0.4) {y}; \node[] at (4,-0.3) {x};
\node[] at (2.7,1) {x}; \node[] at (2.7,1.7) {z}; \node[] at
(4.7,-0.3) {z}; \node[] at (4,-1) {(b)};

\draw (6,0)-- (6,2)--(8,2)--(8,0) -- cycle; \draw (6,0)--(8,2);
\draw[very thick] (6.4,0.4)--(6.4,1); \draw[very thick]
(7,1)--(7.7,1); \draw[very thick] (7.7,1.7)--(6.4,1.7); \node[] at
(6.4,-0.3) {x}; \node[] at (5.7,0.4) {x}; \node[] at (7,-0.3) {z};
\node[] at (5.7,1) {z}; \node[] at (5.7,1.7) {y}; \node[] at
(7.7,-0.3) {y}; \node[] at (7,-1) {(c)};

\draw (9,0)-- (9,2)--(11,2)--(11,0) -- cycle; \draw (9,0)--(11,2);
\draw[very thick] (9.4,0.4)--(10.7,0.4); \draw[very thick]
(10,1)--(9.4,1); \draw[very thick] (10.7,1.7)--(10.7,1); \node[] at
(9.4,-0.3) {y}; \node[] at (8.7,0.4) {y}; \node[] at (10,-0.3) {z};
\node[] at (8.7,1) {z}; \node[] at (8.7,1.7) {x}; \node[] at
(10.7,-0.3) {x}; \node[] at (10,-1) {(d)};
%\draw[step=1cm, gray, very thin](0,0) grid (4,4);
\end{tikzpicture}
\caption{Four pictures that guarantee the non-associativity of $F$}
\label{fig111}
\end{figure}
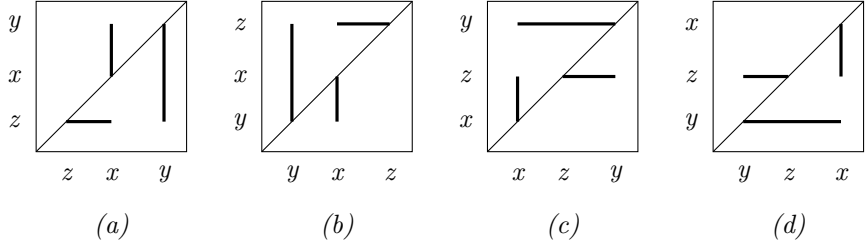
\end{lemma}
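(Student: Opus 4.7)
The plan is to bootstrap from Proposition~\ref{prop:eqv}, which already characterizes non-associativity of a quasitrivial $F$ by the existence of pairwise distinct $x,y,z$ with $F(x,y),F(x,z),F(y,z)$ pairwise distinct. Under the extra hypothesis that $F$ is nondecreasing, I will enumerate which configurations of three elements of the chain can carry such a value-pattern, and verify that exactly the four listed pictures result.

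For the direction ``$\Leftarrow$'', each of the four pictures is a drawing of three level-segments emanating from the diagonal. By Observation~\ref{l1} these segments certify specific values of $F$ at the three off-diagonal corners, and in every picture one reads off that $\{F(x,y),F(x,z),F(y,z)\}=\{x,y,z\}$; in particular the three values are pairwise distinct. Proposition~\ref{prop:eqv}(ii)$\Rightarrow$(i) then yields non-associativity.

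For ``$\Rightarrow$'', I invoke Proposition~\ref{prop:eqv} to obtain pairwise distinct $x,y,z$ with $F(x,y),F(x,z),F(y,z)$ pairwise distinct. Quasitriviality places each of these values in the corresponding unordered pair, and distinctness then forces the three values to exhaust $\{x,y,z\}$. A short enumeration of $(F(x,y),F(x,z))\in\{x,y\}\times\{x,z\}$ discards the two assignments incompatible with this requirement and leaves exactly
\[
\text{(A) } F(x,y)=x,\ F(x,z)=z,\ F(y,z)=y \quad\text{or}\quad \text{(B) } F(x,y)=y,\ F(x,z)=x,\ F(y,z)=z.
\]
Next I sift the six possible chain-orderings of $\{x,y,z\}$ against nondecreasingness. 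The key step is to convert each fixed-coordinate comparison between two of the known values into an order-biconditional: for instance in (A), combining $F(x,z)=z$ with $F(y,z)=y$ via nondecreasingness in the first variable gives $x<y\Leftrightarrow z<y$, and combining $F(x,y)=x$ with $F(x,z)=z$ gives $y<z\Leftrightarrow x<z$. These two biconditionals rule out four of the six orderings, leaving exactly $z<x<y$ and $y<x<z$, which are pictures~(a) and~(b). A parallel analysis of~(B) leaves $x<z<y$ and $y<z<x$, which are pictures~(c) and~(d). The horizontal/vertical level-segments drawn in each picture are then precisely those forced by Observation~\ref{l1} from the three known corner values.

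The main obstacle is keeping the ordering analysis from degenerating into a brute six-way table; extracting the constraints as biconditionals (as above) is the cleanest route I see, and it simultaneously produces the matching picture.
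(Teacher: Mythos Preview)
Your proof is correct and follows essentially the same route as the paper: both invoke Proposition~\ref{prop:eqv} to reduce non-associativity to the two value-patterns (your (A),(B) are the paper's Cases 1,2), and then use nondecreasingness to eliminate all but four of the twelve ordering/pattern combinations. The only difference is cosmetic: where the paper argues visually that eight configurations are ``fake'' because level-set lines of different values would cross (working one example in detail and asserting the rest), you extract the same monotonicity constraints as order-biconditionals and eliminate orderings algebraically---this is arguably cleaner and more self-contained than the paper's sketch, but it is the same argument underneath.
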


\begin{proof}
By Proposition \ref{prop:eqv}, $F$ is not associative if and only if
there exists distinct $x,y,z\in X$ satisfying one of the following
cases:
    \begin{equation}\label{eq1}
    F(x,y)=x, F(x,z)=z, F(y,z)=y~~~~ \textrm{(Case 1),}
    \end{equation}
    {\centering or}
    \begin{equation}\label{eq2}
    F(x,y)=y, F(y,z)=z, F(x,z)=x~~~~\textrm{(Case 2)} .
    \end{equation}

%\eqref{cdt:06}.
Since $x,y,z\in X$ pairwise distinct elements, they can be ordered
in 6 possible configuration of type $x<y<z$.
For each case either \eqref{eq1} or \eqref{eq2} holds. Therefore we have 12 configurations as possible realizations of Case 1 or Case 2. % of \eqref{eq1} or \eqref{eq2}. %Hereinafter we call Case 1 or Case 2 if  or equation \eqref{eq2} is satisfied, respectively.

% Using our notation and Observation \ref{l1} we can draw a picture from this situation.
Let us consider Case 1 (when equation \eqref{eq1} holds) and assume
$x<y<z$. This implies the situation of Figure \ref{f1}.
\begin{figure}[ht]
\centering
\begin{tikzpicture}
 %C/.style = {circle,thick,draw, inner sep=2pt},
\draw (0,0)-- (0,3)--(3,3)--(3,0) -- cycle; \draw (0,0)--(3,3);
\draw[very thick] (0.75,0.75)--(0.75,1.5); \draw[very thick]
(1.5,1.5)--(1.5,2.25); \draw[very thick] (2.25,2.25)--(0.75,2.25);
\filldraw[red] (1.5,2.25) circle (0.1cm); \node[] at (0.75,-0.3)
{x}; \node[] at (-0.3,0.75) {x}; \node[] at (1.5,-0.3) {y}; \node[]
at (-0.3,1.5) {y}; \node[] at (2.25,-0.3) {z}; \node[] at
(-0.3,2.25) {z};
%\draw[step=1cm, gray, very thin](0,0) grid (3,3);
%\filldraw (1,1) circle[radius=1.5pt];
\end{tikzpicture}
%\hspace{2cm}
\caption{Case 1 and $x<y<z$ ('Fake' example)}\label{f1}
\end{figure}
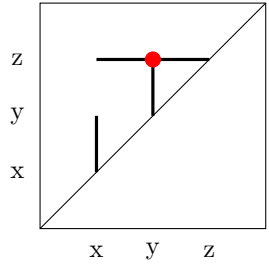

The red point signs the problem of this configuration, since two
lines with different values cross each other. There is no such a
quasitrivial monotone operation.

Thus this subcase provides 'fake' example to study associativity.
From the total, 8 cases are 'fake' in this sense.

The remaining 4 cases are presented in the statement. Figure
\ref{fig111} (a) and (b) represent the cases when equation
\eqref{eq1} holds, and Figure \ref{fig111} (c) and (d) represent the
cases when \eqref{eq2} holds.
\end{proof}

Since for a 2-element set none of the cases of Figure \ref{fig111}
can be realized, as an immediate consequence of Lemma \ref{lpic} we
get the following.
\begin{corollary}\label{cor2}
 Every quasitrivial nondecreasing operation $F:L_2^2\to L_2$ is associative.
\end{corollary}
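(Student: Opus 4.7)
The plan is to apply Lemma \ref{lpic} in its contrapositive form. That lemma asserts that a quasitrivial nondecreasing operation on a chain fails to be associative if and only if one can exhibit three pairwise distinct elements $x,y,z$ whose images under $F$ realize one of the four configurations (a)--(d) in Figure \ref{fig111}.

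The chain $L_2=\{1,2\}$ has only two elements, so there is no triple of pairwise distinct elements available in $L_2$ at all. Consequently none of the four configurations can occur, and Lemma \ref{lpic} forces $F$ to be associative.

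There is no genuine obstacle here: the statement reduces to a trivial cardinality argument on top of Lemma \ref{lpic}. Should one prefer a self-contained verification, one could instead enumerate directly: quasitriviality forces $F(1,1)=1$ and $F(2,2)=2$, leaving only four candidate operations according to the values of $F(1,2)$ and $F(2,1)$ in $\{1,2\}$; each is trivially nondecreasing, and checking the associative law $F(F(a,b),c)=F(a,F(b,c))$ on the eight triples $(a,b,c)\in L_2^3$ is routine. The proof via Lemma \ref{lpic} is preferable since it makes transparent \emph{why} associativity is automatic for $|X|\le 2$, namely the complete absence of a three-element witness for non-associativity.
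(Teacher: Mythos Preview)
Your proof is correct and matches the paper's own argument exactly: the paper also derives Corollary~\ref{cor2} as an immediate consequence of Lemma~\ref{lpic}, observing that on a two-element set none of the four configurations in Figure~\ref{fig111} can be realized. Your additional remark about direct enumeration is fine but not needed.
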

%Another consequence arise
As a byproduct of this visualization we obtain a simple alternative proof for the following fact. This was proved first in \cite[Proposition 2]{Martin2003}. %For finite chains more can be stated see \cite[Proposition 11.]{Jimmy2017} (see Proposition \ref{prop:ane}). %We get the analogue result using visualization is the following.
\begin{corollary}
Let $X$ be nonempty chain and $F:X^2\to X$ be a quasitrivial
symmetric nondecreasing operation then $F$ is associative.
\end{corollary}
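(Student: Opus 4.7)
The plan is a proof by contradiction built directly on the visual dichotomy in Lemma \ref{lpic}. Assume $F$ is quasitrivial, symmetric and nondecreasing, and suppose for contradiction that $F$ is not associative. Then Lemma \ref{lpic} supplies pairwise distinct $x,y,z\in X$ whose images under $F$ realize one of the four configurations (a)--(d) of Figure \ref{fig111}.

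The key remark is that symmetry of $F$ forces its contour plot to be invariant under the reflection $(u,v)\mapsto(v,u)$ across $\Delta_X$. In each of the four pictures we may therefore augment the three displayed level-segments with their mirror images across $\Delta_X$, and the augmented picture produces two points carrying different $F$-values in a common row or column, which contradicts nondecreasingness.

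For concreteness, the argument I would write out explicitly is for picture (a), where the configuration is $z<x<y$ with $F(x,z)=z$, $F(x,y)=x$ and $F(y,z)=y$. Symmetry yields $F(z,y)=F(y,z)=y$; since $z<x$, nondecreasingness in the first variable gives $y=F(z,y)\le F(x,y)=x$, contradicting $x<y$. Pictures (b), (c), (d) are dispatched by the same one-line recipe: pick a displayed value whose reflection across $\Delta_X$ lands in the same row or column as another displayed value, and apply monotonicity along that coordinate. (For instance, in picture (c), where $x<z<y$ with $F(x,y)=y$, $F(y,z)=z$, $F(x,z)=x$, symmetry gives $F(y,x)=y$, and nondecreasingness in the second variable with $x<z$ yields $y=F(y,x)\le F(y,z)=z$, contradicting $z<y$.)

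I do not foresee a genuine obstacle. The only nuisance is bookkeeping, since the symbols $x,y,z$ carry different orderings across the four pictures; once picture (a) has been handled carefully, the remaining three cases are mechanical and identical in structure.
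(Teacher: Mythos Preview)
Your proof is correct and follows essentially the same approach as the paper: invoke Lemma \ref{lpic}, reflect each of the four pictures across $\Delta_X$ using symmetry, and observe that the resulting configuration is incompatible with monotonicity. The only difference is presentational: the paper phrases the final contradiction visually (the reflected level-segments \emph{cross}, which is impossible), whereas you write out the same contradiction as an explicit inequality.
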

\begin{proof}
 If we add the assumption of symmetry of $F$, each cases presented in Figure \ref{fig111} have crossing lines (as in Figure \ref{figsym}), which is not possible. Thus $F$ is automatically associative.
%As we present first case of Figure \ref{fig111} in Figure \ref{figsym}. Since  %The equation $F(y,z)=F(z,y)=y$ and $z<x<y$ implies that $F(x,y)=y$. Thus Figure \ref{fig111} (a) is not possible for quasitrivial, nondecreasing operations. For other cases similar argument works. Therefore,
%none of the cases of Figure \ref{fig111} is possible, $F$ is automatically associative.
\begin{figure}[ht]
\begin{tikzpicture}
 %C/.style = {circle,thick,draw, inner sep=2pt},
\draw (0,0)-- (0,4)--(4,4)--(4,0) -- cycle; \draw (0,0)--(4,4);
\draw[very thick] (1,1)--(2,1); \draw[very thick] (2,2)--(2,3);
\draw[very thick] (3,3)--(3,1); \node[] at (1,-0.3) {z}; \node[] at
(-0.3,1) {z}; \node[] at (2,-0.3) {x}; \node[] at (-0.3,2) {x};
\node[] at (3,-0.3) {y}; \node[] at (-0.3,3) {y};
%\draw[step=1cm, gray, very thin](0,0) grid (4,4);
\node[] at (5.5, 2) {$\Longrightarrow$}; \draw (7,0)--
(7,4)--(11,4)--(11,0) -- cycle; \draw (7,0)--(11,4); \draw[very
thick] (8,1)--(9,1); \draw[very thick] (9,2)--(9,3); \draw[very
thick] (10,3)--(10,1); \draw[very thick, dashed] (8,1)--(8,2);
\draw[very thick, dashed] (9,2)--(10,2); \draw[very thick, dashed]
(10,3)--(8,3); \filldraw[red] (9,3) circle (0.1cm); \filldraw[red]
(10,2) circle (0.1cm); \node[] at (8,-0.3) {z}; \node[] at (6.7,1)
{z}; \node[] at (9,-0.3) {x}; \node[] at (6.7,2) {x}; \node[] at
(10,-0.3) {y}; \node[] at (6.7,3) {y};
%\draw[step=1cm, gray, very thin](7,0) grid (11,4);
%\node[] at (2,-1) {Case 1/b: $y<z<x$ };
%\filldraw (1,1) circle[radius=1.5pt];
\end{tikzpicture}
\caption{The symmetric case}\label{figsym}
\end{figure}
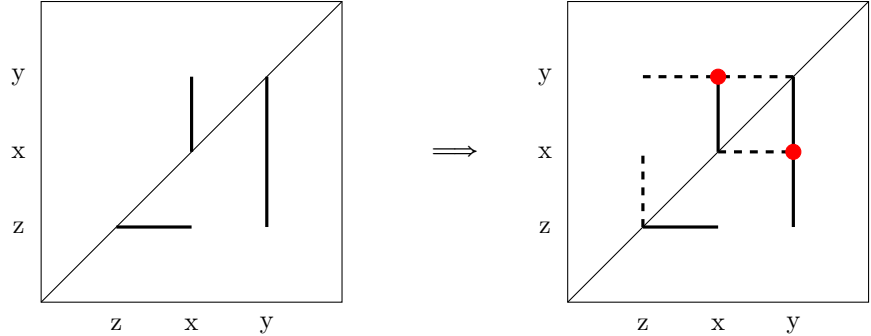
\end{proof}

For finite chains more can be stated.

\begin{proposition}[{\cite[Proposition 11.]{Jimmy2017}}]\label{prop:ane}
If $F:L_k^2\to L_k$ is quasitrivial symmetric nondecreasing then it
is associative and has a neutral element.
\end{proposition}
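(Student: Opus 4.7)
The associativity assertion is immediate from the corollary immediately preceding this proposition, so the substantive content is the existence of a neutral element; I would prove this by induction on $k$, with the case $k=1$ being trivial.

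For the inductive step, quasitriviality forces $F(x,y)\in\{x,y\}\subseteq L_{k-1}$ whenever $x,y\le k-1$, so the restriction $F|_{L_{k-1}^2}$ is again a quasitrivial, symmetric, nondecreasing operation on $L_{k-1}$. By the inductive hypothesis it admits a neutral element $e\in L_{k-1}$, i.e.\ $F(e,x)=x=F(x,e)$ for every $x\le k-1$. It then remains to analyse $F(e,k)\in\{e,k\}$.

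If $F(e,k)=k$, then combining with the inductive property and symmetry, $e$ is already neutral on $L_k$. Otherwise $F(e,k)=e$, and Observation \ref{l1} (applied with $x=e$, $t=k$) yields $F(e,s)=e$ for every $s\in[e,k]$; comparing with $F(e,s)=s$ on $[e,k-1]$ from the inductive hypothesis forces $e=k-1$. By symmetry $F(k,k-1)=k-1$, so for any $x<k-1$ nondecreasingness of $F(k,\cdot)$ gives $F(k,x)\le k-1<k$, and quasitriviality then yields $F(k,x)=x$. Together with $F(k,k)=k$, $F(k,k-1)=k-1$, and symmetry in the other variable, this shows that $k$ itself is a neutral element. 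The pinch-point of the argument is precisely this second case, where Observation \ref{l1} has to be used sharply to pin $e$ to the top of $L_{k-1}$ and then to promote $k$ to the identity via monotonicity of $F(k,\cdot)$; the rest is essentially bookkeeping.
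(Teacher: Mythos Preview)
Your proof is correct. Note, however, that the paper does not actually supply its own proof of this proposition: it is quoted verbatim as \cite[Proposition~11]{Jimmy2017} and used as a black box, so there is no argument in the paper to compare yours against.

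That said, your induction is clean and fits naturally into the paper's framework. The associativity part is indeed immediate from the preceding corollary. For the neutral element, your reduction to $L_{k-1}$ is legitimate because quasitriviality guarantees $F$ maps $L_{k-1}^2$ into $L_{k-1}$, and the two cases on $F(e,k)$ are handled correctly: in the second case, Observation~\ref{l1} is exactly the tool needed to force $e=k-1$, and the subsequent promotion of $k$ to neutral via nondecreasingness of $F(k,\cdot)$ and quasitriviality is sound. One could alternatively argue symmetrically by restricting to $\{2,\dots,k\}$ and examining $F(e,1)$, or by looking directly at $F(1,k)\in\{1,k\}$ and peeling off a layer from whichever corner is absorbed; all of these are minor variants of the same idea.
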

\begin{remark}
The conclusion that $F$ has a neutral element is not necessarily
true when $X=[0,1]$ (see \cite{Martin2003}).
This fact is one of the main difference between the cases $X=L_k$ and $X=[0,1]$.%(or $\mathbb{R}$).
\end{remark}

If we assume that $F$ has a neutral element (as it follows by
Proposition \ref{prop:ane} for finite chains), then as a consequence
of Observation \ref{l1} we get the following pictures (Figure
\ref{figsymne}) for quasitrivial monotone operations having neutral
elements. In Figure \ref{figsymne} the neutral element is denoted by
$e$.
\begin{figure}[ht]
\begin{tikzpicture}
 %C/.style = {circle,thick,draw, inner sep=2pt},
\draw (0,0)-- (0,4)--(4,4)--(4,0) -- cycle; \draw (0,0)--(4,4);
\draw[very thick] (0,0)--(1.5,0); \draw[very thick] (0,0)--(0,1.5);
\draw[very thick] (1,1)--(1.5,1); \draw[very thick] (1,1)--(1,1.5);
\draw[very thick] (2,2)--(1.5,2); \draw[very thick] (2,2)--(2,1.5);
\draw[very thick] (3,3)--(1.5,3); \draw[very thick] (3,3)--(3,1.5);
\draw[very thick] (4,4)--(1.5,4); \draw[very thick] (4,4)--(4,1.5);
\draw[very thick] (0.5,0.5)--(0.5,1.5); \draw[very thick]
(0.5,0.5)--(1.5,0.5); \draw[very thick] (2.5,2.5)--(2.5,1.5);
\draw[very thick] (2.5,2.5)--(1.5,2.5); \draw[very thick]
(3.5,3.5)--(3.5,1.5); \draw[very thick] (3.5,3.5)--(1.5,3.5);
\node[] at (1.5,-0.3) {e}; \node[] at (-0.3,1.5) {e}; \filldraw
(1.5,1.5) circle[radius=1.5pt];

\draw[loosely dotted] (5,1.5)-- (5,4)--(6.5,4); \draw[loosely
dotted] (9,1.5)-- (9,0)--(6.5,0); \draw
(6.5,1.5)--(9,1.5)--(9,4)--(6.5,4)--cycle; \draw
(6.5,1.5)--(5,1.5)--(5,0)--(6.5,0)--cycle; \node[] at (5.75,0.75)
{$x\wedge y$}; \node[] at (7.75,2.75) {$x\vee y$}; \node[] at
(6.5,-0.3) {e}; \node[] at (4.7,1.5) {e};
%\draw[step=1cm, gray, very thin](7,0) grid (11,4);
\end{tikzpicture}
\caption{Partial description of a quasitrivial monotone operations having neutral elements %symmetric idempotent monotone operation
}\label{figsymne}
\end{figure}
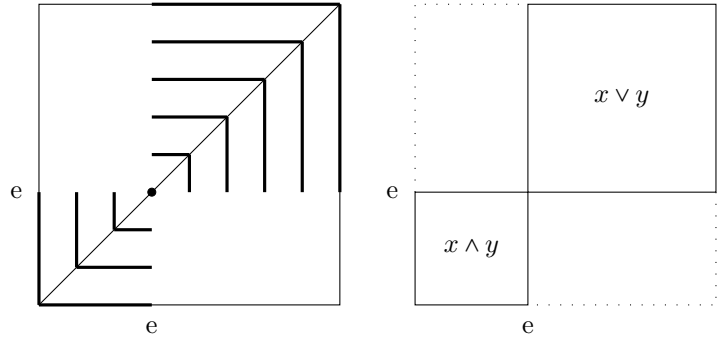

%It is important to note that this consequence that $F$ has a neutral element is one of the main differences between the discrete case $L_k$ and the continuous case $[0,1]$.

\section{Visual characterization of associative quasitrivial nondecreasing operations defined on $L_k$ }\label{s4}

From now on we
denote %$T=L_k^2$ and
the upper and the lower 'triangle' by
$$T_1=\{(x,y): x,y\in L_k, x\le y\},\ \ \ \ T_2=\{(x,y): x,y\in L_k, x\ge y\},$$
respectively, %and lower 'triangle' by
%$$T_2=\{(x,y): x,y\in L_k, x\le y\}.$$
as in Figure \ref{figtri}. We note that $T_1\cap T_2$ is the
diagonal $\Delta_{L_k}$.
\begin{figure}[ht]
\centering

\begin{tikzpicture}
 %C/.style = {circle,thick,draw, inner sep=2pt},
%\draw (0,0)-- (0,3)--(3,3)--(3,0) -- cycle;
%\filldraw[red]
%\draw (0,0)--(3,3);
\draw (0,0)-- (0,3)--(3,3)-- cycle; \draw[loosely dotted]
(3,3)--(3,0)--(0,0); \draw (7,3)--(7,0)--(4,0)-- cycle;
\draw[loosely dotted] (4,0)-- (4,3)--(7,3) ; \node[] at (1,2)
{$\huge{T_1}$}; \node[] at (6,1) {$\huge{T_2}$};
%\node[] at (-0.3,0.75) {y};
%\node[] at (1.5,-0.3) {x};
%\node[] at (-0.3,1.5) {x};
%\node[] at (2.25,-0.3) {z};
%\node[] at (-0.3,2.25) {z};
%\draw[step=1cm, gray, very thin](0,0) grid (3,3);

%\filldraw (1,1) circle[radius=1.5pt];
\end{tikzpicture}
%\hspace{2cm}

\caption{The upper and lower 'triangles' $T_1$ and
$T_2$}\label{figtri}
\end{figure}
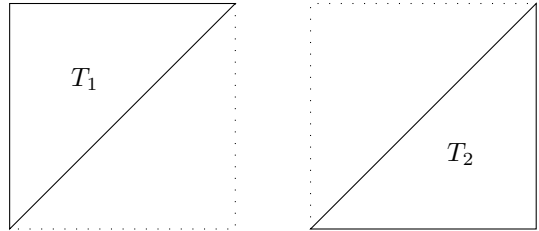
\begin{definition}
For a operation $F:L_k^2\to L_k$
%we call $F_1:L_k^2\to L_k$ and $F_2:L_k^2\to L_k$
there can be defined the {\it upper symmetrization $F_1$ and lower
symmetrization $F_2$} of $F$ as
\begin{equation*}
F_1(x,y)=\begin{cases}F(x,y) ~~~~&\textrm{ if } (x,y)\in T_1\\
F(y,x) ~~~~&\textrm{ if } (y,x)\in T_1
\end{cases}
~~~ \textrm{ and }~~~ F_2(x,y)=\begin{cases}F(x,y) ~~~~&\textrm{ if } (x,y)\in T_2\\
F(y,x) ~~~~&\textrm{ if } (y,x)\in T_2,
\end{cases}
\end{equation*}
Briefly, $F_1(x,y)=F(x\wedge y, x\vee y),\  F_2(x,y)=F(x\vee y,
x\wedge y)\ \  \forall x,y \in L_k$.
\end{definition}
%Since $F$ is idempotent the $F_1$ and $F_2 $
Fodor \cite{Fodor1996} (see also \cite[Theorem 2.6]{Sander}) shown
the following statement.
\begin{proposition}\label{prop:Fodor}
Let $X$ be a nonempty chain and $F:X^2\to X$ be an associative
operation. Then $F_1$ and $F_2$, the upper and the lower
symmetrization of $F$, are also associative.
\end{proposition}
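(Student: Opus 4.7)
The plan is to start from the observation that $F_1$ is symmetric by construction: $F_1(x,y) = F(x \wedge y, x \vee y) = F_1(y,x)$. So only the identity $F_1(F_1(x,y), z) = F_1(x, F_1(y,z))$ will need checking. I would then proceed by case analysis on the relative order of $x, y, z$ in the chain $X$, using the symmetry of $F_1$ in its inner arguments to collapse the six linear orderings into a few essentially different configurations. In each configuration I would unfold $F_1$ via $F_1(u,v) = F(u \wedge v, u \vee v)$ to express both sides as values of $F$. As a representative case, when $x \le y \le z$ one has $F_1(x,y) = F(x,y)$ and $F_1(y,z) = F(y,z)$, and provided $F(x,y) \le z$ and $x \le F(y,z)$, both sides collapse directly to the common value guaranteed by the associativity of $F$, namely $F(F(x,y), z) = F(x, F(y,z))$.

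The hard part will be the boundary subcases where an intermediate value $F_1(x,y)$ or $F_1(y,z)$ falls on the other side of the remaining argument (for instance $F(x,y) > z$ in the ordering $x \le y \le z$). There the $\wedge, \vee$ inside $F_1$ silently reorder the arguments, and the expressions on the two sides must be re-expanded and reconciled by a second application of associativity of $F$, e.g.\ $F(z, F(x,y)) = F(F(z,x), y)$. Running through these reorganizations for each ordering and subcase will complete the verification for $F_1$; the argument for $F_2$ will be entirely dual, swapping the roles of $\wedge$ and $\vee$ throughout.

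In the setting of the present paper, where $F$ is additionally quasitrivial and nondecreasing, I would sidestep the case analysis entirely. Indeed $F_1$ inherits quasitriviality from $F(x \wedge y, x \vee y) \in \{x \wedge y, x \vee y\} \subseteq \{x, y\}$ and nondecreasingness from the componentwise monotonicity of $\wedge$ and $\vee$. Being in addition symmetric, $F_1$ is then associative by the fact, established earlier in Section \ref{s3}, that every quasitrivial symmetric nondecreasing operation on a chain is associative; this would yield a one-line proof in the special case that the paper actually uses.
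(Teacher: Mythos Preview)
The paper does not supply a proof of this proposition; it attributes the result to Fodor \cite{Fodor1996} (see also \cite[Theorem 2.6]{Sander}) and moves on. So there is no in-paper argument to compare yours against.

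More importantly, your general-case plan cannot be completed, because the proposition as stated---with associativity of $F$ as the \emph{only} hypothesis---is false. On $X=L_3$ define $F(x,y)=2$ if $x=2$ and $F(x,y)=1$ otherwise. Each row of $F$ is constant and the range $\{1,2\}$ is fixed by every row, so $F(F(x,y),z)=F(x,y)=F(x,F(y,z))$ and $F$ is associative. But
\[
F_1(F_1(2,3),3)=F_1(2,3)=F(2,3)=2,\qquad F_1(2,F_1(3,3))=F_1(2,F(3,3))=F_1(2,1)=F(1,2)=1.
\]
The failure occurs precisely in the ``boundary subcases'' you flagged: once $F(x,y)$ can land outside $[x\wedge y,\,x\vee y]$, the reorganizations via associativity of $F$ no longer match up, and no amount of case-chasing will close the gap. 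Fodor's original result presumably carries an extra hypothesis (idempotency, internality, or monotonicity) that the paper's formulation silently drops.

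Your second paragraph, by contrast, is entirely correct and is exactly what the paper actually needs. Under the standing assumptions of quasitriviality and nondecreasingness, $F_1$ is symmetric by construction, quasitrivial because $F(x\wedge y,x\vee y)\in\{x\wedge y,x\vee y\}$, and nondecreasing via the monotonicity of $\wedge,\vee$; the unlabelled corollary just before Proposition~\ref{prop:ane} then yields associativity immediately. This one-line route is self-contained and avoids relying on the external citation or on the over-stated (and, as written, incorrect) general version.
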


This idea makes it possible to investigate the two 'parts' of a
non-symmetric associative operation as one-one half of two symmetric
associative operations.

By Proposition \ref{prop:ane}, both symmetrization of a
nondecreasing quasitrivial operation $F:L_k^2\to L_k$ has a neutral
element.
\begin{definition}
We call an element {\it upper (or lower) half-neutral element} of
$F$ if it is the neutral element of the upper (or the lower)
symmetrization. For simplicity we always denote the upper and lower
half-neutral element of $F$ by $e$ and $f$, respectively.
\end{definition}

Summarizing the previous results we get following partial
description.
\begin{proposition}\label{cor}
Let $F:L_k^2\to L_k$ be an associative quasitrivial nondecreasing
operation. Then it has an upper and an lower half-neutral element
denoted by $e$ and $f$. Moreover, if $e\le f$ then
%$F$ is a minimum if $< $
\begin{equation*}
F(x,y)=\begin{cases}
x\wedge y ~~~~&\textrm{ if } x\vee y\le e\\
y ~~~~&\textrm{ if } e \le x\le f\\
x \vee y ~~~~&\textrm{ if } f\le x\wedge y
\end{cases}
\end{equation*}
Analogously, if $f\le e$ then
%$F$ is a minimum if $< $
\begin{equation*}
F(x,y)=\begin{cases}
x\wedge y ~~~~&\textrm{ if } x\vee y\le f\\
x ~~~~&\textrm{ if } f \le x\le e\\
x \vee y ~~~~&\textrm{ if } e\le x\wedge y
\end{cases}
\end{equation*}
\end{proposition}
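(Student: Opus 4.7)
The plan is to lift the structure of the two symmetrizations $F_1,F_2$ back to $F$ and then propagate the resulting neutral-element identities via Observation \ref{l1}. First I would verify that each $F_i$ is symmetric (by construction), quasitrivial, and nondecreasing---all inherited from $F$ since $(x,y)\mapsto(x\wedge y,x\vee y)$ is coordinatewise monotone and $F(x\wedge y,x\vee y)\in\{x,y\}$. Proposition \ref{prop:ane} then yields the upper and lower half-neutral elements $e$ and $f$ of $F$; unpacking $F_1(e,z)=F(e\wedge z,e\vee z)=z$ and the analogous identity for $F_2$ on each side of the diagonal gives the four boundary identities
\begin{equation*}
F(z,e)=z\ \ (z\le e),\quad F(e,z)=z\ \ (z\ge e),\quad F(f,z)=z\ \ (z\le f),\quad F(z,f)=z\ \ (z\ge f).
\end{equation*}

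Assume now $e\le f$. Each of the three cases in the conclusion will then follow by sliding one of these four identities along a row or column using Observation \ref{l1}. For instance, in the region $x\vee y\le e$ I would split on $x\le y$ versus $y\le x$: in the first subcase, $F(x,e)=x$ together with the first part of Observation \ref{l1} gives $F(x,s)=x$ for all $s\in[x,e]$, hence $F(x,y)=x=x\wedge y$; in the second, $F(f,y)=y$ (valid because $y\le e\le f$) together with the second part gives $F(s,y)=y$ for all $s\in[y,f]$, hence $F(x,y)=y=x\wedge y$. The region $e\le x\le f$ is dispatched by sliding $F(e,y)=y$ when $y\ge x$ and $F(f,y)=y$ when $y\le x$, and the region $f\le x\wedge y$ by sliding $F(e,y)=y$ when $x\le y$ and $F(x,f)=x$ when $y\le x$. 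The case $f\le e$ is handled by the same argument with the roles of $F_1$ and $F_2$ swapped (equivalently, by applying the $e\le f$ case to the reversed operation $F^{\ast}(x,y):=F(y,x)$, whose upper and lower symmetrizations are $F_2$ and $F_1$).

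The only genuine work is the routine bookkeeping of the six subcases; no idea beyond Observation \ref{l1} enters, so I do not anticipate any real obstacle.
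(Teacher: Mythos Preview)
Your proposal is correct and matches the paper's approach: the paper states the proposition as ``summarizing the previous results'' without further proof, and what you outline is exactly that summary made explicit---apply Proposition~\ref{prop:ane} to the symmetrizations $F_1,F_2$ to obtain the half-neutral elements, then use Observation~\ref{l1} to propagate the resulting boundary identities across the three regions. Your six subcases and the remark that Proposition~\ref{prop:Fodor} is not actually needed (since Proposition~\ref{prop:ane} already applies once $F_i$ is seen to be symmetric, quasitrivial, and nondecreasing) are both fine.
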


\begin{figure}[ht]
\begin{tikzpicture}
 %C/.style = {circle,thick,draw, inner sep=2pt},
\draw (0,0)-- (0,4)--(4,4)--(4,0) -- cycle; \draw (0,0)--(4,4);
\draw[very thick] (0,0)--(3,0); \draw[very thick] (0,0)--(0,1.5);
\draw[very thick] (1,1)--(3,1); \draw[very thick] (1,1)--(1,1.5);
\draw[very thick] (2,2)--(3,2); \draw[very thick] (2,2)--(1.5,2);
%\draw[very thick] (3,3)--(3,3);
\draw[very thick] (3,3)--(1.5,3); \draw[very thick] (4,4)--(4,3);
\draw[very thick] (4,4)--(1.5,4); \draw[very thick]
(0.5,0.5)--(0.5,1.5); \draw[very thick] (0.5,0.5)--(3,0.5);
\draw[very thick] (1.5,1.5)--(3,1.5); \draw[very thick]
(2.5,2.5)--(1.5,2.5); \draw[very thick] (2.5,2.5)--(3,2.5);
\draw[very thick] (3.5,3.5)--(1.5,3.5); \draw[very thick]
(3.5,3.5)--(3.5,3);
%\node[] at (1.5,-0.3) {e};
\node[] at (-0.3,1.5) {e}; \node[] at (3,-0.3) {f};
%\node[] at (-0.3,3.5) {f};
\filldraw (1.5,1.5) circle[radius=1.5pt]; \filldraw (3,3)
circle[radius=1.5pt];

\draw[loosely dotted] (5,1.5)-- (5,4)--(6.5,4); \draw[loosely
dotted] (9,3)-- (9,0)--(8,0); \draw (6.5,1.5)--(6.5,4); \draw
(8,0)--(8,3); \draw (6.5,0)--(8,0); \draw (6.5,4)--(8,4); \draw
(9,4)--(9,3)--(8,3)--(8,4)--cycle; \draw
(6.5,1.5)--(5,1.5)--(5,0)--(6.5,0)--cycle; \node[] at (5.75,0.75)
{$x\wedge y$};
\node[] at (8.5,3.5) {$x\vee y$ };%\tiny{$\max(x,y)$}};
\node[] at (7.25,2) {Proj$_y$};

\node[] at (4.7,1.5) {e}; \node[] at (8,-0.3) {f};
%\node[] at (4.7,1.5) {e};
%\draw[step=1cm, gray, very thin](7,0) grid (11,4);
\end{tikzpicture}
\caption{Partial description of associative quasitrivial monotone
operations when $e\le f$}\label{figprtde}
\end{figure}

We note that $e=f$ iff $F$ has a neutral element.

%Proposition 4.4 is illustrated in Figure \ref{figprtde} when $e\le f$.
%\newpage
The following lemma is essential for the visual  characterization.

\begin{lemma}\label{limp}
Let $F:L_k^2\to L_k$ be an associative quasitrivial nondecreasing
operation. Assume that there exists $a<b\in L_k$ such that
$F(a,b)=a$ and $F(b,a)=b$. Then one of the following holds:
\begin{enumerate}
\item[(a)] If $F(a+1, a)=a$, then
\begin{equation*}
F(x,b)=b \textrm{ and } F(y, a)=a
\end{equation*}
for every $x\in[a+1,b]$ and $y\in [a,b-1]$.
\item[(b)] If $F(a+1,a)=a+1$, then
$F(x,y)=x \ (=Proj_x)$ for all $x,y\in [a,b]$. % Moreover, $a$ and $b$ is the lower and the upper half-neutral element of $F$, respectively.
\begin{figure}[ht]
\begin{tikzpicture}
 %C/.style = {circle,thick,draw, inner sep=2pt},
%\draw (0,0)-- (0,3)--(3,3)--(3,0) -- cycle;
%\filldraw[red]
%\draw (0,0)--(3,3);
%\node[] at (2,7.3) {};

\draw (2.5,4)-- (2.5,7); \draw (5.5,4)-- (5.5,7);
%\scalebox{0.4};
\draw[loosely dotted] (2.5,7)--(5.5,7)--(5.5,4)--(2.5,4)--cycle;
\node[] at (2.5,3.7) {a};
%\node[] at (1.5,-0.3) {a};
\node[] at (5.5,3.7) {b};
%\node[] at (1.5,-0.3) {b};
\node[] at (4,3.5) {$\huge{\Downarrow}$}; \draw (0,0)-- (0,3); \draw
(3,0)-- (3,3); \draw (2.6,0)--(0,0); \draw (3,3)--(0.4, 3); \node[]
at (0,-0.3) {a}; \node[] at (0.6,-0.3) {a+1}; \node[] at (3,-0.3)
{b}; \node[] at (2.6,-0.3) {b-1};
%\draw[loosely dotted]
%(0,3)--(3,3)--(3,0)--(0,0)--cycle;

\node[] at (4,1.5) { or };

%\draw (8,0)-- (8,3);
\draw (5,0)-- (5,3);
%\draw (5.2,0)-- (5.2,3);
\draw (5.4,0)-- (5.4,3);
%\draw (5.6,0)-- (5.6,3);
\draw (5.8,0)-- (5.8,3);
%\draw (6,0)-- (6,3);
\draw (6.2,0)-- (6.2,3);
%\draw (6.4,0)-- (6.4,3);
\draw (6.6,0)-- (6.6,3);
%\draw (6.8,0)-- (6.8,3);
\draw (7,0)-- (7,3);
%\draw (7.2,0)-- (7.2,3);
\draw (7.4,0)-- (7.4,3);
%\draw (7.6,0)-- (7.6,3);
\draw (7.8,0)-- (7.8,3); \node[] at (5,-0.3) {a}; \node[] at
(7.8,-0.3) {b};
%\draw[loosely dotted]
%(0,3)--(3,3)--(3,0)--(0,0)--cycle;\node[] at (-0.3,0.75) {y};
%\node[] at (1.5,-0.3) {x};
%\node[] at (-0.3,1.5) {x};
%\node[] at (2.25,-0.3) {z};
%\node[] at (-0.3,2.25) {z};
%\draw[step=1cm, gray, very thin](0,0) grid (3,3);

%\filldraw (1,1) circle[radius=1.5pt];
\end{tikzpicture}
\caption{Graphical interpretation of Lemma
\ref{limp}}\label{figlimpa}
\end{figure}
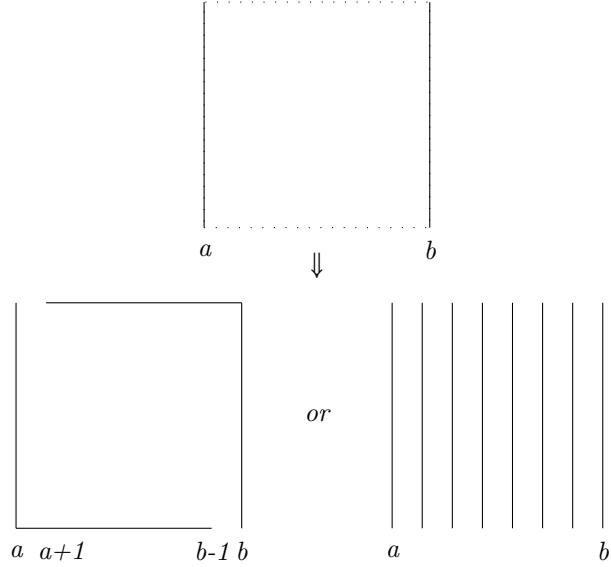
%\hspace{2cm}
\end{enumerate}
\end{lemma}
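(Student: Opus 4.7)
The plan is first to distill a key biconditional from associativity. Applying Observation \ref{l1} to $F(a,b)=a$ and $F(b,a)=b$, we obtain $F(a,s)=a$ and $F(b,s)=b$ for every $s\in[a,b]$. Associativity applied to the triple $(y,a,b)$ yields $F(F(y,a),b)=F(y,F(a,b))=F(y,a)$, which forces $F(y,b)=y$ whenever $F(y,a)=y$; the symmetric computation on $(y,b,a)$ gives the converse. Hence for every $y\in[a+1,b-1]$ we have the biconditional $F(y,a)=y\Leftrightarrow F(y,b)=y$, or equivalently (by quasitriviality) $F(y,a)=a\Leftrightarrow F(y,b)=b$.

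For case (b), the hypothesis $F(a+1,a)=a+1$ combined with nondecreasingness in the first variable gives $F(y,a)\ge a+1$ for all $y\ge a+1$; since $F(y,a)\in\{y,a\}$ by quasitriviality, this forces $F(y,a)=y$ on $[a+1,b]$. The biconditional then yields $F(y,b)=y$ on $[a+1,b-1]$, while $F(b,b)=b$ by idempotence. Sandwiching via nondecreasingness in the second variable, $y=F(y,a)\le F(y,s)\le F(y,b)=y$ for $s\in[a,b]$, so $F(y,s)=y$ for all $y\in[a+1,b]$ and $s\in[a,b]$. Combined with $F(a,s)=a$, this gives $F=\mathrm{Proj}_x$ on $[a,b]^2$, as required.

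For case (a), I argue by contradiction: suppose $F(y_0,a)=y_0$ for some $y_0\in[a+1,b-1]$ and choose $y_0$ minimal. The hypothesis $F(a+1,a)=a$ forces $y_0\ge a+2$, so $y_0-1\in[a+1,b-2]$ and $F(y_0-1,a)=a$. Associativity on $(y_0-1,y_0,a)$ gives $F(F(y_0-1,y_0),a)=F(y_0-1,y_0)$; the option $F(y_0-1,y_0)=y_0-1$ would force $a=y_0-1$, contradicting $y_0\ge a+2$. Hence $F(y_0-1,y_0)=y_0$. The biconditional yields $F(y_0,b)=y_0$, so nondecreasingness in the first variable gives $F(y_0-1,b)\le y_0$; together with $F(y_0-1,b)\in\{y_0-1,b\}$ and $b>y_0$, this forces $F(y_0-1,b)=y_0-1$. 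But then nondecreasingness in the second variable demands $y_0=F(y_0-1,y_0)\le F(y_0-1,b)=y_0-1$, a contradiction. Thus $F(y,a)=a$ for every $y\in[a+1,b-1]$ (and trivially for $y=a$), and the biconditional provides the companion statement $F(x,b)=b$ for $x\in[a+1,b-1]$, with $x=b$ following from idempotence.

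The hard part is case (a): the three values $F(y_0,a),F(y_0,b),F(y_0-1,y_0)$ are not pairwise distinct, so Proposition \ref{prop:eqv} does not yield an immediate contradiction. The contradiction must instead be engineered by propagating the problematic value to the corner $(y_0-1,b)$, where nondecreasingness in the two variables pulls $F(y_0-1,b)$ in opposite directions.
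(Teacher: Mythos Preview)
Your proof is correct, but it proceeds differently from the paper's. The paper does not set up your biconditional or run a minimal-counterexample argument; instead, for case~(a) it applies the visual non-associativity test of Lemma~\ref{lpic} directly to two well-chosen boundary triples. From $F(a+1,a)=a$, $F(b,a)=b$, and the hypothesis $F(a+1,b)=a+1$ one lands exactly in Figure~\ref{fig111}(a) (with $z=a$, $x=a+1$, $y=b$), so $F(a+1,b)=b$; Observation~\ref{l1} then propagates this to $F(x,b)=b$ on $[a+1,b]$. In particular $F(b-1,b)=b$, and now the triple $(a,b-1,b)$ together with the hypothesis $F(b-1,a)=b-1$ would realise Figure~\ref{fig111}(b), forcing $F(b-1,a)=a$, which again propagates via Observation~\ref{l1}. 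Case~(b) in the paper is closer to yours: it derives $F(x,a)=x$ as you do, then argues $F(b-1,b)=b-1$ (else the analysis of case~(a) applies), and propagates. Your sandwiching via the biconditional is a clean alternative here.

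The trade-off: the paper's argument is shorter and more visual once Lemma~\ref{lpic} is available, and it fits the pictorial theme of the section. Your argument is more self-contained and algebraic---it does not need the four-picture classification at all, only associativity, quasitriviality, and monotonicity directly. Your closing remark that Proposition~\ref{prop:eqv} ``does not yield an immediate contradiction'' is true for \emph{your} triple $(y_0-1,y_0,a)$, but note that the paper sidesteps this entirely by choosing the triples $(a,a+1,b)$ and $(a,b-1,b)$, for which the values \emph{are} pairwise distinct and Lemma~\ref{lpic} fires immediately.
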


\begin{proof}
Assume first that $F(a+1,a)=a$. Then it follows that $F(a+1,b)=b$,
otherwise we get Figure \ref{fig111} (a). Using Observation \ref{l1}
we have that $F(x,b)=b$ for every $x\in [a+1,b]$. The equation
$F(b-1,b)=b$ implies that $F(b-1, a)=a$, otherwise  we are in the
situation of Figure \ref{fig111} (b). Similarly, as above we get
that $F(y,a)=a$ for every $y\in[a,b-1]$. Here we note that an
analogue argument gives the same result if we assume originally that
$F(b-1,b)=b$.

Now assume that $F(a+1,a)=a+1$. This immediately implies that
$F(x,a)=x$ for every $x\in[a,b]$ by quasitriviality, since it cannot
be $a$ by the nondecreasingness of $F$. Using Observation \ref{l1}
again, it follows that $F(x,y)=x$ for all $y\in [a,x]$. Since
$F(b-1,b)=b$ also implies the previous case, the assumption
$F(a+1,a)=a+1$ implies $F(b-1,b)=b-1$. Similarly as above, this
condition implies that $F(x,b)=x$ for all $x\in[a,b]$ and, by
Observation \ref{l1}, it follows that $F(x,y)=x$ for every $y\in
[x,b]$. Altogether we get that $F(x,y)=x=\textrm{Proj}_x(x,y)$ as we
stated.
\end{proof}

\begin{remark}\label{rimp}
Analogue of Lemma \ref{limp} can be formalized as follows.

{\it Let $F:L_k^2\to L_k$ be an associative quasitrivial
nondecreasing operation. Assume that there exists $a<b\in L_k$ such
that  $F(b,a)=a$ and $F(a,b)=b$. Then one of the following holds:
\begin{enumerate}
\item[(a)] If $F(a,a+1)=a$, then
\begin{equation*}
F(b,x)=b \textrm{ and } F(a,y)=a
\end{equation*}
for every $x\in[a+1,b]$ and $y\in [a,b-1]$.
\item[(b)] If $F(a,a+1)=a+1$, then
$F(x,y)=y(=Proj_y)$ for all $x,y\in [a,b]$.% Moreover, $a$ and $b$ is the upper and the lower half-neutral element of $F$, respectively.
\begin{figure}[ht]
\begin{tikzpicture}
 %C/.style = {circle,thick,draw, inner sep=2pt},
%\draw (0,0)-- (0,3)--(3,3)--(3,0) -- cycle;
%\filldraw[red]
%\draw (0,0)--(3,3);
%\node[] at (2,7.3) {};

\draw (2.5,4)-- (5.5,4); \draw (2.5,7)-- (5.5,7);
%\scalebox{0.4};
\draw[loosely dotted] (2.5,7)--(5.5,7)--(5.5,4)--(2.5,4)--cycle;
\node[] at (2.2,4) {a};
%\node[] at (1.5,-0.3) {a};
\node[] at (2.2,7) {b};
%\node[] at (1.5,-0.3) {b};
\node[] at (4,3.5) {$\huge{\Downarrow}$}; \draw (0,0)-- (3,0); \draw
(0,3)-- (3,3); \draw (0,2.6)--(0,0); \draw (3,3)--(3,0.4); \node[]
at (-0.3,0) {a}; \node[] at (-0.4,0.4) {a+1}; \node[] at (-0.3,3)
{b}; \node[] at (-0.4,2.6) {b-1};
%\draw[loosely dotted]
%(0,3)--(3,3)--(3,0)--(0,0)--cycle;

\node[] at (4,1.5) { or };

%\draw (8,0)-- (8,3);
\draw (5,0)-- (8,0);
%\draw (5.2,0)-- (5.2,3);
\draw (5,0.4)-- (8,0.4);
%\draw (5.6,0)-- (5.6,3);
\draw (5,0.8)-- (8,0.8);
%\draw (6,0)-- (6,3);
\draw (5,1.2)-- (8,1.2);
%\draw (6.4,0)-- (6.4,3);
\draw (5,1.6)-- (8,1.6);
%\draw (6.8,0)-- (6.8,3);
\draw (5,2)-- (8,2);
%\draw (7.2,0)-- (7.2,3);
\draw (5,2.4)-- (8,2.4);
%\draw (7.6,0)-- (7.6,3);
\draw (5,2.8)-- (8,2.8); \node[] at (4.7,0) {a}; \node[] at (4.7,3)
{b};
%\draw[loosely dotted]
%(0,3)--(3,3)--(3,0)--(0,0)--cycle;\node[] at (-0.3,0.75) {y};
%\node[] at (1.5,-0.3) {x};
%\node[] at (-0.3,1.5) {x};
%\node[] at (2.25,-0.3) {z};
%\node[] at (-0.3,2.25) {z};
%\draw[step=1cm, gray, very thin](0,0) grid (3,3);

%\filldraw (1,1) circle[radius=1.5pt];
\end{tikzpicture}
\caption{Graphical interpretation of Remark
\ref{rimp}}\label{figrimpa}
\end{figure}
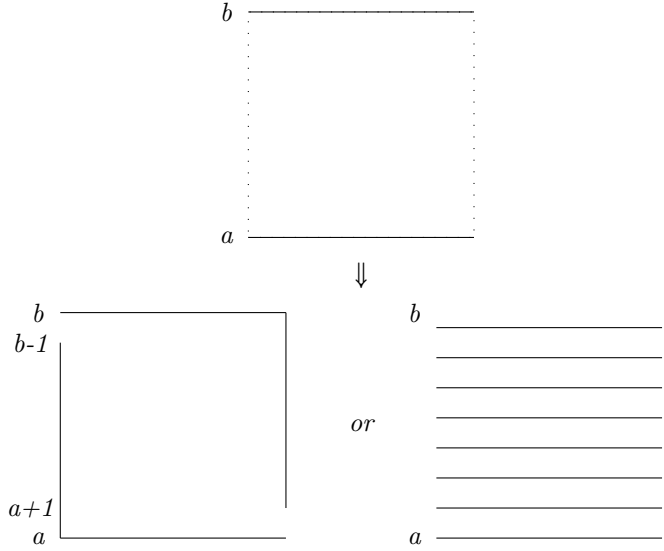
%\hspace{2cm}
\end{enumerate}
} The proof of this statement is analogue to Lemma \ref{limp} using
Figure \ref{fig111}(c) and (d) instead of Figure \ref{fig111}(a) and
(b), respectively.
\end{remark}

%\bigskip
%Structure:

%Lemma: partial description of $F$

%Lemma: Reduction process starting from the 'corners'. (3 cases)

%Theorem: Characterization .
\newpage
From the previous results we conclude the following.
\begin{lemma}\label{labin} Let $F:L_k^2\to L_k$ be an associative quasitrivial and nondecreasing operation and $e$ and $f$ the upper and the lower half-neutral elements, respectively, and let $a,b\in L_k$ ($a<b$) be given. %such that $a<b$.
If  $F(x,y)=x$ for every $x,y\in [a,b]$ (i.e, Lemma \ref{limp} (b)
holds), then $f< e$ and $[a,b]\subseteq [f,e]$. Similarly, if
$F(x,y)=y$ for every $x,y\in [a,b]$ (i.e, Remark \ref{rimp} (b)
holds), then $e< f$ and $[a,b]\subseteq [e,f]$.
\end{lemma}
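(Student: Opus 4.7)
The plan is to reduce the claim to the symmetric situation by passing to the upper and lower symmetrizations $F_1$ and $F_2$ of $F$, and then to read off the required bounds from the values of $F$ at the two ``corner'' pairs of $[a,b]\times [a,b]$. By Proposition \ref{prop:Fodor} both $F_1$ and $F_2$ are associative; they are evidently quasitrivial, symmetric and nondecreasing, and by definition $e$ and $f$ are their respective neutral elements. Applying Proposition \ref{cor} to $F_1$ (whose two half-neutral elements coincide with its neutral element $e$), the middle strip degenerates and I obtain the full formula
\[
F_1(x,y)= x\wedge y \quad\text{for } x\vee y\le e,\qquad F_1(x,y)= x\vee y \quad\text{for } e\le x\wedge y,
\]
and similarly $F_2(x,y)=x\wedge y$ for $x\vee y\le f$ and $F_2(x,y)=x\vee y$ for $f\le x\wedge y$.

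Assume first the hypothesis $F=\mathrm{Proj}_x$ on $[a,b]^2$. Then $F(b-1,b)=b-1$, so $F_1(b-1,b)=b-1$. If $b>e$ then, since $L_k$ is discrete, $b-1\ge e$ and the formula above forces $F_1(b-1,b)=b$, a contradiction; hence $b\le e$. The mirror computation uses $F(a+1,a)=a+1$, whence $F_2(a,a+1)=a+1$: if $a<f$ then $a+1\le f$ and the $F_2$-formula forces $F_2(a,a+1)=a$, a contradiction; hence $f\le a$. Combining, $f\le a<b\le e$, which proves simultaneously that $f<e$ and $[a,b]\subseteq [f,e]$.

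For the second case ($F=\mathrm{Proj}_y$ on $[a,b]^2$) the argument is identical after exchanging the roles of $F_1$ and $F_2$: now $F_1(a,a+1)=a+1$ forces $a\ge e$ (else both arguments are $\le e$ and the min-formula gives $a$), and $F_2(b-1,b)=b-1$ forces $b\le f$. This yields $e\le a<b\le f$, so $e<f$ and $[a,b]\subseteq [e,f]$.

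I do not anticipate any real obstacle: the whole proof consists in reading the Proposition \ref{cor} formula for the symmetrizations at two specific corner pairs. The one place that actually uses finiteness of $L_k$ is the implication ``$b>e \Rightarrow b-1\ge e$'', which forbids $e$ (or $f$) from sitting strictly between consecutive elements; on a dense chain one would have to test at an interior pair rather than at the corner, but here this discreteness is exactly what makes the ``unit-step'' test at $(b-1,b)$ and $(a+1,a)$ tight enough to pin down $[a,b]\subseteq[f,e]$.
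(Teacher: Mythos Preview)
Your argument is correct. Both you and the paper derive the lemma from Proposition~\ref{cor}; the paper applies it directly to $F$ (noting that outside $[e\wedge f,e\vee f]$ the operation is forced to be $\min$ or $\max$, incompatible with a projection, and that the middle strip is $\mathrm{Proj}_y$ when $e\le f$), while you take the small detour of applying Proposition~\ref{cor} to the symmetrizations $F_1,F_2$ and testing at the corner pairs $(b-1,b)$ and $(a+1,a)$. The two routes are essentially the same in content, with yours being a bit more explicit about where the discreteness of $L_k$ enters.
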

\begin{proof}
This is a direct consequence of Proposition \ref{cor}. If $a$ or $b$
is not in $[e\wedge f,e\vee f]$ then $\tilde{F}=F|_{[a,b]^2}$
contains a part where $\tilde{F}$ is a minimum or a maximum.
Moreover, it is also easily follows that if $F(x,y)=x$ for every
$x,y\in [a,b]$, then $f< e$ must hold. Similarly, $F(x,y)=y$ for
every $x,y\in [a,b]$ implies $e< f$.
\end{proof}
\begin{corollary}\label{corcases}
 Let $F, e, f$ be as in Lemma \ref{labin} and assume that $a,b\in X$ such that $a<b$ and $F(a,b)\ne F(b,a)$. Then
 \begin{enumerate}
 \item[(i)] Lemma \ref{limp}(b) holds iff $f< e$ and $a,b\in [f,e]$,
 \item[(ii)] Remark \ref{rimp}(b) holds iff $e< f$ and $a,b\in [e,f]$.
 \item[(iii)] Lemma \ref{limp}(a) or  Remark \ref{rimp}(a) holds iff $a,b\not\in[e\wedge f,e\vee f]$.
 \end{enumerate}
 \end{corollary}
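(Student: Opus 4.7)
My approach is to first use quasitriviality and the hypothesis $F(a,b)\ne F(b,a)$ to split into two mutually exclusive configurations: Case~1 with $(F(a,b),F(b,a))=(a,b)$, the hypothesis of Lemma~\ref{limp}, and Case~2 with $(F(a,b),F(b,a))=(b,a)$, the hypothesis of Remark~\ref{rimp}. Inside each case the dichotomy (a)/(b) of Lemma~\ref{limp} (resp.\ Remark~\ref{rimp}) is governed by the quasitrivial value of $F(a+1,a)$ (resp.\ $F(a,a+1)$), so the possibilities are partitioned into exactly four mutually exclusive alternatives L(a), L(b), R(a), R(b). This clean four-way partition is the engine of the proof.

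Parts (i) and (ii) then follow at once. The forward direction of (i) is Lemma~\ref{labin} applied directly. For the converse, if $f<e$ and $a,b\in[f,e]$ then Proposition~\ref{cor} gives $F(x,y)=x$ on $[f,e]^2$, hence on the sub-square $[a,b]^2\subseteq[f,e]^2$, which is exactly the conclusion of Lemma~\ref{limp}(b). Part (ii) is symmetric. The backward direction of (iii) is then a short deduction from (i) and (ii): if both $a,b\notin[e\wedge f,e\vee f]$, neither L(b) nor R(b) can hold, since each would force both $a$ and $b$ into $[e\wedge f,e\vee f]$; the four-way partition therefore forces L(a) or R(a).

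The main content is the forward direction of (iii), which I would prove by establishing the four strict inequalities $a<e$, $a<f$, $b>e$, $b>f$, whose conjunction is precisely $a<e\wedge f$ and $b>e\vee f$. Assuming L(a), each inequality comes from a one-line inspection of an equality in the appropriate symmetrization, using the description of $F_1,F_2$ on $[1,e]^2\cup[e,k]^2$ and $[1,f]^2\cup[f,k]^2$ as min or max from Figure~\ref{figsymne}: the hypothesis $F(a+1,a)=a$ (read in $F_2$) rules out $a\ge f$; the equation $F(b-1,b)=b$, which L(a) yields via $x=b-1\in[a+1,b]$ (read in $F_1$), rules out $b\le e$; and Case~1 itself, applied to $F_1$ for $F(a,b)=a$ and to $F_2$ for $F(b,a)=b$, delivers $a<e$ and $b>f$. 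The case R(a) is handled identically via the Remark~\ref{rimp} analogues. The only delicate point, and what I expect to be the main obstacle, is a correct reading of the conclusion: ``$a,b\notin[e\wedge f,e\vee f]$'' must mean that \emph{both} $a$ and $b$ lie strictly outside that closed interval, not merely that at least one does. This is why two strict bounds are needed for each of $a$ and $b$, and why one cannot simply invoke the negation of (i) and (ii) in the forward direction.
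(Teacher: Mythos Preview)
Your proof is correct; the paper gives no explicit proof of this corollary, treating it as an immediate consequence of Lemma~\ref{labin} together with Lemma~\ref{limp}, Remark~\ref{rimp} and Proposition~\ref{cor}, and your argument supplies exactly those details using the same ingredients. You are also right that the forward direction of (iii) is the only nontrivial step and cannot be read off from (i) and (ii) alone; your derivation of the four strict inequalities $a<e$, $a<f$, $b>e$, $b>f$ via the symmetrizations $F_1,F_2$ (and your observation that case L(a) forces $b\ge a+2$, so that $F(b-1,b)$ is indeed covered by Lemma~\ref{limp}(a)) is the natural way to fill that gap.
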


With other words we have:
\begin{corollary}
 Let $F, e, f$ be as in Lemma \ref{labin}. Then $F(a,b)=F(b,a)$, if $a\not\in [e\wedge f,e\vee f]$ and $b\in [e\wedge f,e\vee f]$, or $b\not\in [e\wedge f,e\vee f]$ and $a\in [e\wedge f,e\vee f]$.
 \end{corollary}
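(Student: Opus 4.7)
The plan is to derive this statement as a direct contrapositive of Corollary \ref{corcases}, which enumerates exhaustively the three scenarios under which $F(a,b)\ne F(b,a)$ can occur for $a<b$. First I would note that the conclusion $F(a,b)=F(b,a)$ is symmetric in $a$ and $b$, and so is the hypothesis (``exactly one of $a,b$ lies in $[e\wedge f, e\vee f]$''). Hence I may assume without loss of generality that $a<b$.

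Next I would argue by contradiction: suppose $F(a,b)\ne F(b,a)$. Then Corollary \ref{corcases} guarantees that at least one of the following holds:
\begin{enumerate}
\item[(i)] $f<e$ and $a,b\in[f,e]$, in which case both $a$ and $b$ lie in $[e\wedge f,e\vee f]$;
\item[(ii)] $e<f$ and $a,b\in[e,f]$, in which case again both $a$ and $b$ lie in $[e\wedge f,e\vee f]$;
\item[(iii)] both $a,b\notin[e\wedge f,e\vee f]$.
\end{enumerate}
In each of these three cases, $a$ and $b$ are either simultaneously inside or simultaneously outside the interval $[e\wedge f,e\vee f]$. This contradicts the hypothesis that exactly one of $a$ and $b$ lies in $[e\wedge f,e\vee f]$.

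The only subtlety worth flagging is the symmetry reduction to $a<b$: the prior corollary was phrased with $a<b$, so one must check the roles of $a,b$ can be exchanged without loss, which is immediate since swapping $a$ and $b$ preserves both the hypothesis and conclusion. Beyond that, the proof is a direct case elimination using Corollary \ref{corcases}, and no new combinatorial work is required.
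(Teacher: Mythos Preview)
Your proof is correct and matches the paper's approach: the paper simply introduces this corollary with the phrase ``With other words we have,'' treating it as an immediate contrapositive restatement of Corollary \ref{corcases}, which is exactly what you do. Your explicit handling of the symmetry reduction to $a<b$ and the observation that the three cases of Corollary \ref{corcases} are exhaustive (since quasitriviality forces one of the Lemma \ref{limp}/Remark \ref{rimp} alternatives) makes the argument cleaner than the paper's one-line justification.
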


This form makes it possible to extend the partial description. (See
Figure \ref{figeprtde} for the case $e<f$.)

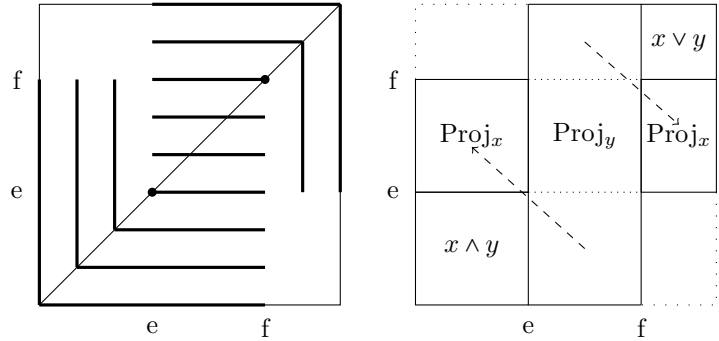
\begin{figure}[ht]
\begin{tikzpicture}
 %C/.style = {circle,thick,draw, inner sep=2pt},
\draw (0,0)-- (0,4)--(4,4)--(4,0) -- cycle; \draw (0,0)--(4,4);
\draw[very thick] (0,0)--(3,0); \draw[very thick] (0,0)--(0,3);
\draw[very thick] (1,1)--(3,1); \draw[very thick] (1,1)--(1,3);
\draw[very thick] (2,2)--(3,2); \draw[very thick] (2,2)--(1.5,2);
%\draw[very thick] (3,3)--(3,3);
\draw[very thick] (3,3)--(1.5,3); \draw[very thick] (4,4)--(4,1.5);
\draw[very thick] (4,4)--(1.5,4); \draw[very thick]
(0.5,0.5)--(0.5,3); \draw[very thick] (0.5,0.5)--(3,0.5); \draw[very
thick] (1.5,1.5)--(3,1.5); \draw[very thick] (2.5,2.5)--(1.5,2.5);
\draw[very thick] (2.5,2.5)--(3,2.5); \draw[very thick]
(3.5,3.5)--(1.5,3.5); \draw[very thick] (3.5,3.5)--(3.5,1.5);
\node[] at (1.5,-0.3) {e}; \node[] at (-0.3,1.5) {e}; \node[] at
(3,-0.3) {f}; \node[] at (-0.3,3) {f}; \filldraw (1.5,1.5)
circle[radius=1.5pt]; \filldraw (3,3) circle[radius=1.5pt];

\draw[loosely dotted] (5,3)-- (5,4)--(6.5,4); \draw[loosely dotted]
(9,1.5)-- (9,0)--(8,0); \draw[dotted] (6.5,3)-- (8,3); \draw
[dashed,->] (7.25,0.75)--(5.75, 2.1); \draw[dotted] (6.5,1.5)--
(8,1.5); \draw [dashed,->] (7.25,3.5)--(8.5, 2.4);

\draw (6.5,1.5)--(6.5,4); \draw (8,0)--(8,3); \draw (6.5,0)--(8,0);
\draw (6.5,4)--(8,4); \draw
(5,1.5)--(5,3)--(6.5,3)--(6.5,1.5)--cycle; \draw
(8,1.5)--(8,3)--(9,3)--(9,1.5)--cycle; \draw
(9,4)--(9,3)--(8,3)--(8,4)--cycle; \draw
(6.5,1.5)--(5,1.5)--(5,0)--(6.5,0)--cycle; \node[] at (5.75,0.75)
{$x\wedge y$};
\node[] at (8.5,3.5) {$x\vee y$ };%\tiny{$\max(x,y)$}};
\node[] at (7.25,2.25) {Proj$_y$}; \node[] at (5.75,2.25)
{Proj$_x$}; \node[] at (8.5,2.25) {Proj$_x$};

\node[] at (4.7,1.5) {e}; \node[] at (6.5,-0.3) {e}; \node[] at
(8,-0.3) {f}; \node[] at (4.7,3) {f};
%\draw[step=1cm, gray, very thin](7,0) grid (11,4);
\end{tikzpicture}
\caption{Extended partial description of associative quasitrivial
monotone operations when $e< f$}\label{figeprtde}
\end{figure}

Using Lemma \ref{limp} and Remark \ref{rimp} we can provide a visual
characterization of associative quasitrivial nondecreasing
operations. The characterization based on the following algorithm
which outputs the contour plot of $F$.
%The algorithm works as follows. In each step we choose

%Let be given an associative quasitrivial nondecreasing operation $F:L_k^2\to L_k$ and $Q_i$ denote the square what we have at the beginning of the $i^{th}$ step. In particular $Q_1$ denotes the square $L_n^2$, the domain of $F$. In each step of the algorithm we repeat essentially the same step.
Before we present the algorithm we note that the letters indicated
in the following figures represent the value of operation $F$ in the
corresponding points or lines (not a coordinate of the points itself
as usual).

{\bf Algorithm}
\begin{enumerate}
\item[Initial setting:] Let $Q_1=L_k^2$ and  $F:L_k^2\to L_k$ be an associative quasitrivial nondecreasing operation.
\item[Step i.] For $Q_i=[a,b]^2$ ($a\le b$) we distinguish cases according to the values of $F(a,b)$ and $F(b,a)$. Whenever $Q_i$ contains only 1 element ($a=b$) for some $i$, then we are done.
%\begin{enumerate}
\item[I. (a)]% The symmetric case
If $F(a,b)=F(b,a)=a$, then draw  straight lines between the points
$(b,a)$ and $(a,a)$ and between $(a,b)$ and $(a,a)$. Let
$Q_{i+1}=[a+1,b]^2$. (See Figure \ref{figmove1}.)
\begin{figure}[ht]
\begin{tikzpicture}

\node[] at (3,-0.3) {a}; \filldraw (3,0) circle[radius=1.5pt];
\node[] at (-0.3,3) {a}; \filldraw (0,3) circle[radius=1.5pt];

\draw[loosely dotted] (0,3)--(3,3)--(3,0)--(0,0)--cycle;

\node[] at (4,1.5) { $\Longrightarrow$ };

\draw (5.4,0.4)--(5.4,3)--(8,3)--(8, 0.4)--cycle; \node[] at
(6.5,-0.3) {a}; \node[] at (4.7,1.5) {a}; \draw (5,3)--(5,0)--(8,0);

\node[] at (6.7,1.7) {$Q_{i+1}$};
\end{tikzpicture}
\caption{Case I.(a)}\label{figmove1}
\end{figure}
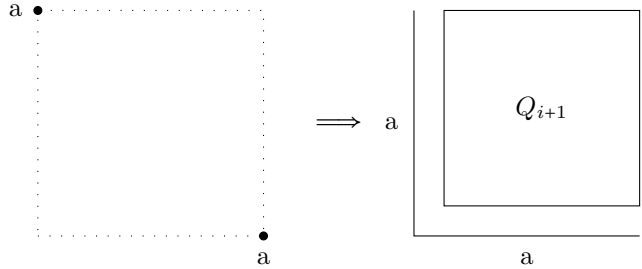
%\begin{enumerate}
%\item[(a)]
 \item[(b)] If $F(a,b)=F(b,a)=b$, then draw straight lines between the points $(a,b)$ and $(b,b)$ and between $(b,a)$ and $(b,b)$. Let $Q_{i+1}=[a,b-1]^2$.
 %\end{enumerate}

 \item[II. (a)]% Projections

 If $F(a,b)=a, F(b,a)=b$ and $F(a+1, a)=a+1$, then $F(x,y)=x$ for all $x,y\in [a,b]$ and we are done. (See Figure \ref{figmove2})
  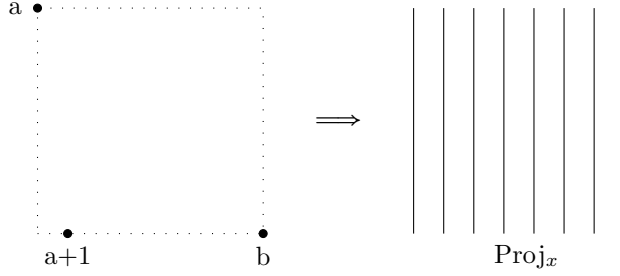
\begin{figure}[ht]
\begin{tikzpicture}

\node[] at (3,-0.3) {b}; \filldraw (3,0) circle[radius=1.5pt];
\node[] at (-0.3,3) {a}; \filldraw (0,3) circle[radius=1.5pt];
\node[] at (0.4,-0.3) {a+1}; \filldraw (0.4,0) circle[radius=1.5pt];

\draw[loosely dotted] (0,3)--(3,3)--(3,0)--(0,0)--cycle;

\node[] at (4,1.5) { $\Longrightarrow$ };

%\node[] at (6.5,3.3) {b};
%\node[] at (8.15,1.5) {b};
%\draw (5,3)--(7.8,3)--(7.8,0);

\draw (5,0)-- (5,3);
%\draw (5.2,0)-- (5.2,3);
\draw (5.4,0)-- (5.4,3);
%\draw (5.6,0)-- (5.6,3);
\draw (5.8,0)-- (5.8,3);
%\draw (6,0)-- (6,3);
\draw (6.2,0)-- (6.2,3);
%\draw (6.4,0)-- (6.4,3);
\draw (6.6,0)-- (6.6,3);
%\draw (6.8,0)-- (6.8,3);
\draw (7,0)-- (7,3);
%\draw (7.2,0)-- (7.2,3);
\draw (7.4,0)-- (7.4,3);
%\draw (7.6,0)-- (7.6,3);
\draw (7.8,0)-- (7.8,3); \node[] at (6.5,-0.3) {Proj$_x$};
%\node[] at (7.8,-0.3) {b};
%\node[] at (6.7,1.7) {$Q_{i+1}$};
\end{tikzpicture}
\caption{Case II.(a)}\label{figmove2}
\end{figure}

\item[(b)]
 If $F(a,b)=b, F(b,a)=a$ and $F(a,a+1)=a+1$, then  $F(x,y)=y$ for all $x,y\in [a,b]$ and we are also done.
%\end{enumerate}

 \item[III. (a)]
  If $F(a,b)=a, F(b,a)=b$ and $F(a+1, a)=a$, then Lemma \ref{limp} (a) holds and we have Figure \ref{figmove3}. Let $Q_{i+1}=[a+1,b-1]^2$.
  \begin{figure}[ht!]
\begin{tikzpicture}
\node[] at (3.3,0) {b}; \filldraw (3,0) circle[radius=1.5pt];
\node[] at (-0.3,3) {a}; \filldraw (0,3) circle[radius=1.5pt];
\node[] at (0.4,-0.3) {a}; \filldraw (0.4,0) circle[radius=1.5pt];

\draw[loosely dotted] (0,3)--(3,3)--(3,0)--(0,0)--cycle;

\node[] at (4,1.5) { $\Longrightarrow$ }; \draw (5,0)-- (5,3); \draw
(8,0)-- (8,3); \draw (7.6,0)--(5,0); \draw (8,3)--(5.4, 3); \node[]
at (6.5,-0.3) {a};
%\filldraw (6.5,-0.3) circle[radius=1.5pt];
\node[] at (4.7,1.5) {a};
%\filldraw (4.7,1.5) circle[radius=1.5pt];
\node[] at (6.5,3.3) {b}; \node[] at (8.15,1.5) {b};

\draw (5.4,0.4)--(5.4,2.6)--(7.6,2.6)--(7.6,0.4)--cycle; \node[] at
(6.5,1.5) {$Q_{i+1}$};
\end{tikzpicture}
\caption{Case III.(a)}\label{figmove3}
\end{figure}
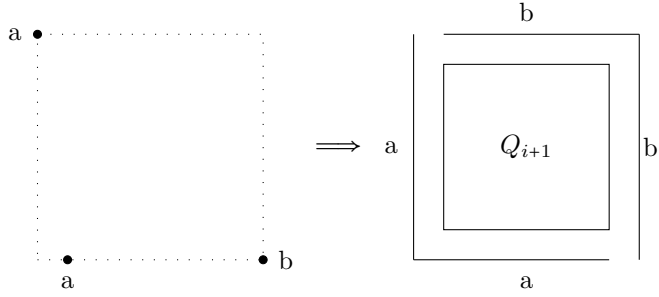
%\end{enumerate}

\item[(b)]
 If $F(a,b)=b, F(b,a)=a$ and $F(a,a+1)=a$, then  Remark \ref{rimp} (a) holds.
 Let $Q_{i+1}=[a+1,b-1]^2$.
\end{enumerate}

%\end{enumerate}
%\end{enumerate}
It is clear that the algorithm is finished after finitely many
steps. Let us denote this number of steps by $l\in \mathbb{N}$.

We also denote the top-left and the bottom-right corner of $Q_i$ by
$p_i$ and $q_i$ ($i=1,\dots,l$), respectively.

Let $\mathcal{P}$ (and $\mathcal{Q}$) denote the path containing
$p_i$ (and $q_i$) for $i\in \{1,\dots, l\}$ and line segments
between consecutive $p_i$'s (and $q_i$'s). Let us denote the line
segment between $p_i$ and $p_{i+1}$ by $\overline{p_i,p_{i+1}}$. We
set the notation $\mathcal{P}=(p_j)_{j=1}^l$ and
$\mathcal{Q}=(q_j)_{j=1}^l$.

Clearly, we get the path $\mathcal{P}$ if we start at  the top-left
corner of $L_k^2$ and in each step we move either one place to the
right or one  place downward or one place diagonally downward-right.

\begin{definition}
We say that a path is a {\it downward-right path} of $L_k$ if in
each step it moves to the nearest point of $L_k^2$ either one place
to the right or one place downward or one place diagonally
downward-right.

 \begin{figure}[ht!]
\begin{tikzpicture}
%\node[] at (3,-0.3) {b};
\filldraw (0,3) circle[radius=1.5pt];

\filldraw (0.3,2.7) circle[radius=1.5pt]; \filldraw (0.6,2.7)
circle[radius=1.5pt]; \node[] at (0.6,2.4) {$\mathcal{P}$};

\filldraw (0.9,2.7) circle[radius=1.5pt]; \filldraw (1.2,2.4)
circle[radius=1.5pt]; \filldraw[red] [->] (0.9,2.7) -- (0.9,2.4);
\filldraw[red] [->] (0.9,2.7) -- (1.2,2.7); \filldraw[red] [->]
(0.9,2.7) -- (1.15,2.45);

\filldraw(1.2,2.1) circle[radius=1.5pt];

%\node[] at (-0.3,3) {a};
%\filldraw (0,3) circle[radius=1.5pt];
%\node[] at (0.4,-0.3) {a};
%\filldraw (0.4,0) circle[radius=1.5pt];
\draw[loosely dotted] (0,3)--(3,3)--(3,0)--(0,0)--cycle;
\draw[dotted]
(0,3)--(0.3,2.7)--(0.6,2.7)--(0.9,2.7)--(1.2,2.4)--(1.2,2.1);
\draw[loosely dotted]
(1.2,2.1)--(1.2,1.2)--(2.1,1.2)--(2.1,2.1)--cycle; \filldraw (3,0)
circle[radius=1.5pt]; \filldraw (2.7,0.3) circle[radius=1.5pt];
\filldraw (2.7,0.6) circle[radius=1.5pt]; \node[] at (2.4,0.6)
{$\mathcal{Q}$}; \filldraw (2.7,0.9) circle[radius=1.5pt]; \filldraw
(2.4,1.2) circle[radius=1.5pt]; \filldraw (2.1,1.2)
circle[radius=1.5pt]; \draw[dotted](3,0)--(2.7,0.3) -- (2.7,0.6)--
 (2.7,0.9)-- (2.4,1.2) -- (2.1,1.2);
\end{tikzpicture}
\caption{The path $\mathcal{P}$ is a downward-right path}
\end{figure}
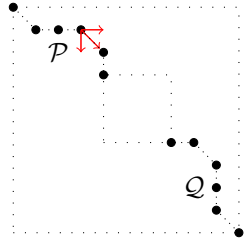

\end{definition}

If  $\overline{p_i, p_{i+1}}$ is horizontal or vertical, then the reduction from $Q_i$ to $Q_{i+1}$ is uniquely determined. Moreover, if $\overline{p_i, p_{i+1}}$ is horizontal, then $F(x,y)=F(y,x)=x\wedge y$, where $p_i=(x,y)$ and $q_i=(y,x)$. Similarly, if $\overline{p_i, p_{i+1}}$ is vertical, then $F(x,y)=F(y,x)=x\vee y$, where $p_i=(x,y)$ and $q_i=(y,x)$. %See Figure...
On the other hand if $\overline{p_i, p_{i+1}}$ is diagonal, then we
have a free choice for the value of $F$ in $p_i$. This is determined
by either Lemma \ref{limp} (a) or Remark \ref{rimp} (a). Since in
this case the value of $F$ in $q_i$ is different from $p_i$, the
value in $q_i$ is automatically defined. It is also clear from the
algorithm that the path $\mathcal{Q}$ is the reflection of
$\mathcal{P}$ to the diagonal $\Delta_{L_k}$.

Using the previous paragraph and Observation \ref{l1} it is possible
to reconstruct operations from a given downward-right path
$\mathcal{P}$ which starts at $p_1=(1,k)$.
\begin{example} We illustrate the reconstruction on $L_6\times L_6$. The paths $\mathcal{P}=(p_j)_{j=1}^5$ and  $\mathcal{Q}=(q_j)_{j=1}^5$ denoted by red and blue, respectively. According to the previous observations we get the following pictures (see Figure \ref{figrec}). It can be clearly seen that $\mathcal{Q}$ is the reflection of $\mathcal{P}$ to the diagonal $\Delta_{L_6}$, and $4$ is the neutral element of the reconstructing operation, where $\mathcal{P}$ and $\mathcal{Q}$ touch each other and reach the diagonal $\Delta_{L_6}$. For the precise statement and proof see Theorem \ref{thmchar}.

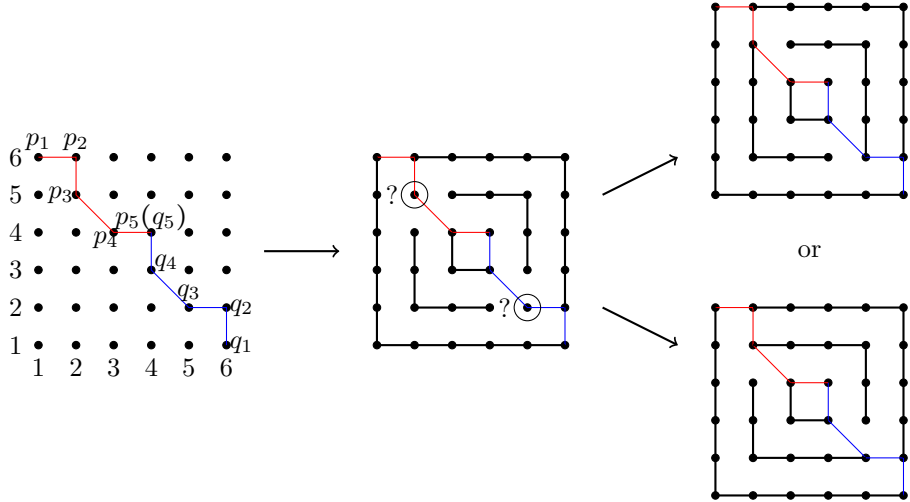
\begin{figure}[ht!]
\begin{tikzpicture}

% \draw[style=help lines,thick] (0,0) grid[step=.5cm] (2.5,2.5);

    \foreach \x in {0,1,...,5}
    {
        \foreach \y in {0,1,...,5}
        {
            \node[draw,circle,inner sep=1pt,fill] at (.5*\x,.5*\y) {};
        }
    }

\node[] at (0,-0.3) {1}; \node[] at (0.5,-0.3) {2}; \node[] at
(1,-0.3) {3}; \node[] at (1.5,-0.3) {4}; \node[] at (2,-0.3) {5};
\node[] at (2.5,-0.3) {6};
%\filldraw (3,0) circle[radius=1.5pt];
\node[] at (-0.3,0) {1}; \node[] at (-0.3,0.5) {2}; \node[] at
(-0.3,1) {3}; \node[] at (-0.3,1.5) {4}; \node[] at (-0.3,2) {5};
\node[] at (-0.3,2.5) {6};

%\draw (0,3) circle[radius=1.5pt];
\draw[red] (0,2.5)--(0.5,2.5)--(0.5,2)--(1,1.5)--(1.5,1.5);
\draw[blue] (1.5,1.5)--(1.5,1)--(2,0.5)--(2.5,0.5)--(2.5,0);
%\draw[loosely dotted] (0,3)--(3,3)--(3,0)--(0,0)--cycle;

\node[] at (0,2.7) {$p_1$}; \node[] at (0.5,2.7) {$p_2$}; \node[] at
(0.3,2) {$p_3$}; \node[] at (0.9,1.4) {$p_4$}; \node[] at (1.5,1.7)
{$p_5(q_5)$};
%\node[] at (0,2.7) {$p_1$};

\node[] at (2.7,0) {$q_1$}; \node[] at (2.7, 0.5) {$q_2$}; \node[]
at (2, 0.7) {$q_3$}; \node[] at (1.7,1.1) {$q_4$};
%\node[] at (0,2.7) {$q_5$};
%\node[] at (0,2.7) {$p_1$};

%\node[] at (3.75,1.25) { $\Longrightarrow$ };

%\draw[->, thick] ();

    \foreach \x in {0,1,...,5}
    {
        \foreach \y in {0,1,...,5}
        {
            \node[draw,circle,inner sep=1pt,fill] at (.5*\x+4.5,.5*\y) {};
        }
    }

\draw[red] (4.5,2.5)--(5,2.5)--(5,2)--(5.5,1.5)--(6,1.5);
\draw[blue](6,1.5)--(6,1)--(6.5,0.5)--(7,0.5)--(7,0);

\draw[thick] (4.5,2.5)--(4.5,0)--(7,0); \draw[thick]
(5,2.5)--(7,2.5)--(7,0.5); \draw[thick] (5,1.5)--(5,0.5)--(6,0.5);
\draw[thick] (5.5,2)--(6.5,2)--(6.5,1); \draw[thick]
(5.5,1.5)--(5.5,1)--(6,1);

\draw[->,thick]  (3,1.25)--(4,1.25); \draw[->, thick] (7.5,2)--(8.5,
2.5); \draw[->, thick] (7.5,0.5)--(8.5, 0); \node[] at (10.25,1.25)
{ or };
 \draw (5,2) circle[radius= 0.5 em];
 %\draw (5,2) circle[radius=1.5pt];
\draw (6.5,0.5) circle[radius= 0.5 em]; \node[] at (4.7,2) {?};
\node[] at (6.2, 0.5) {?};

    \foreach \x in {0,1,...,5}
    {
        \foreach \y in {0,1,...,5}
        {
            \node[draw,circle,inner sep=1pt,fill] at (.5*\x+9,.5*\y+2) {};
        }
    }

  \foreach \x in {0,1,...,5}
    {
        \foreach \y in {0,1,...,5}
        {
            \node[draw,circle,inner sep=1pt,fill] at (.5*\x+9,.5*\y-2) {};
        }
    }

\draw[red] (9,4.5)--(9.5,4.5)--(9.5,4)--(10,3.5)--(10.5,3.5);
\draw[blue](10.5,3.5)--(10.5,3)--(11,2.5)--(11.5,2.5)--(11.5,2);

\draw[thick] (9,4.5)--(9,2)--(11.5,2); \draw[thick]
(9.5,4.5)--(11.5,4.5)--(11.5,2.5); \draw[thick]
(9.5,4)--(9.5,2.5)--(10.5,2.5); \draw[thick]
(10,4)--(11,4)--(11,2.5); \draw[thick] (10,3.5)--(10,3)--(10.5,3);

\draw[red] (9,0.5)--(9.5,0.5)--(9.5,0)--(10,-0.5)--(10.5,-0.5);
\draw[blue](10.5,-0.5)--(10.5,-1)--(11,-1.5)--(11.5,-1.5)--(11.5,-2);

\draw[thick] (9,0.5)--(9,-2)--(11.5,-2); \draw[thick]
(9.5,0.5)--(11.5,0.5)--(11.5,-1.5); \draw[thick]
(9.5,-0.5)--(9.5,-1.5)--(11,-1.5); \draw[thick]
(9.5,0)--(11,0)--(11,-1); \draw[thick]
(10,-0.5)--(10,-1)--(10.5,-1);
%\draw (5.4,0.4)--(5.4,3)--(8,3)--(8, 0.4)--cycle;
%\node[] at (6.5,-0.3) {a};
%\node[] at (4.7,1.5) {a};
%\draw (5,3)--(5,0)--(8,0);

%\node[] at (6.2,1.7) {$Q_{i+1}$};
\end{tikzpicture}
\caption{Reconstruction of $F$ from the path
$\mathcal{P}$}\label{figrec}
\end{figure}

\end{example}

\begin{definition} Let $\mathcal{P}\subset L_k^2$ be the downward-right path from $(1,k)$ to $(a,b)$ ($a<b$) and let $\mathcal{Q}$ be  the reflection of $\mathcal{P}$ to the diagonal $\Delta_{L_k}$.

We say that $(x,y)\in L_k^2\setminus(\mathcal{P} \cup \mathcal{Q}
\cup [a,b]^2)$ is {\it above} $\mathcal{P} \cup \mathcal{Q}$ if
there exists $p=(x,w)\in  \mathcal{P}$ such that $y> w$ or
$q=(w,y)\in  \mathcal{Q}$ such that $x> w$.

Similarly, we say that $(x,y)\in L_k^2\setminus( \mathcal{P} \cup
\mathcal{Q} \cup [a,b]^2)$ is {\it below} $\mathcal{P} \cup
\mathcal{Q}$ if there exists a $p=(x,w)\in  \mathcal{P}$ such that
$y< w$ or a $q=(w,y)\in  \mathcal{Q}$ such that $x< w$.
\end{definition}

%The converse is also true with some restriction.
Using this terminology we can summarize the previous observations
and we get the following characterization. The next statement can be
seen as the analogue of theorem of Czoga\l a-Drewiak \cite[Theorem
3.]{Czogala1984} for finite chains.
\begin{theorem}\label{tfo1}
For every associative quasitrivial nondecreasing operation $F:L_k^2\to L_k$ there exist half-neutral elements $a,b\in L_k$ ($a\le b$) %such that $e$ and $f$ are the lower-half and upper-half-neutral element respectively.
and a downward-right path $\mathcal{P}=(p_j)_{j=1}^l$ (for some
$l\in\mathbb{N},l<k$) from $(1,k)$ to $(a,b)$. We denote the
reflection of $\mathcal{P}$ to the diagonal $\Delta_{L_k}$ by
$\mathcal{Q}=(q_j)_{j=1}^l$. Then for every
$(x,y)\not\in\mathcal{P}\cup\mathcal{Q}$ $$F(x,y)=\begin{cases}
x\vee y, &\textrm{ if } (x,y) \textrm{ is above } \mathcal{P} \cup \mathcal{Q}\\
x\wedge y, &\textrm{ if } (x,y) \textrm{ is below } \mathcal{P} \cup \mathcal{Q}\\
Proj_x \textrm{ or } Proj_y, &\textrm{ if } (x,y)\in [a,b]^2,
\end{cases}$$
and for every $(x,y)\in\mathcal{P}\cup\mathcal{Q}$
$$F(x,y)=\begin{cases}
%Proj_x \textrm{ or } Proj_y , &\textrm{ for every } (x,y)\in [a,b]^2,\\
x\wedge y  &\textrm{ if } (x,y)=p_i \textrm{ or } q_i \textrm{ and } \overline{p_i,p_{i+1}} \textrm{ is horizontal}, \\
x\vee y, &\textrm{ if } (x,y)=p_i \textrm{ or } q_i \textrm{ and } \overline{p_i,p_{i+1}} \textrm{ is vertical,}\\
x \textrm{ or } y, &\textrm{ if } (x,y)=p_i \textrm{ and } \overline{p_i,p_{i+1}} \textrm{ is diagonal,}\\
x \textrm{ or } y, &\textrm{ if } (x,y)=q_i \textrm{ and } \overline{q_i,q_{i+1}} \textrm{ is diagonal.}\\
\end{cases}$$

If $a$ is the lower half-neutral element $f$ and $b$ is the upper
half-neutral element $e$, then $F$ is $Proj_x$ on $[a,b]^2$,
otherwise it is $Proj_y$.

Moreover $F$ is symmetric expect on $[a,b]^2$ and at the points
$p_i\in \mathcal{P}$ and $q_i\in \mathcal{Q}$ where $\overline{p_i,
p_{i+1}}$ is diagonal ($i\in \{1, \dots,l-1\}$).

 \begin{figure}[ht!]
 \centering
\begin{tikzpicture}
%\node[] at (3,-0.3) {b};
\filldraw (0,3) circle[radius=1.5pt];

\filldraw (0.3,2.7) circle[radius=1.5pt]; \filldraw (0.6,2.7)
circle[radius=1.5pt]; \node[] at (0.6,2.4) {$\mathcal{P}$};

\filldraw (0.9,2.7) circle[radius=1.5pt]; \filldraw (1.2,2.4)
circle[radius=1.5pt];
%\filldraw[red] [->] (0.9,2.7) -- (0.9,2.4);
%\filldraw[red] [->] (0.9,2.7) -- (1.2,2.7);
%\filldraw[red] [->] (0.9,2.7) -- (1.15,2.45);

\filldraw(1.2,2.1) circle[radius=1.5pt];

%\node[] at (-0.3,3) {a};
%\filldraw (0,3) circle[radius=1.5pt];
%\node[] at (0.4,-0.3) {a};
%\filldraw (0.4,0) circle[radius=1.5pt];
\draw[] (0,3)--(3,3)--(3,0)--(0,0)--cycle; \draw[]
(0,3)--(0.3,2.7)--(0.6,2.7)--(0.9,2.7)--(1.2,2.4)--(1.2,2.1);
\draw[] (1.2,2.1)--(1.2,1.2)--(2.1,1.2)--(2.1,2.1)--cycle; \filldraw
(3,0) circle[radius=1.5pt]; \filldraw (2.7,0.3)
circle[radius=1.5pt]; \filldraw (2.7,0.6) circle[radius=1.5pt];
\node[] at (2.4,0.6) {$\mathcal{Q}$}; \filldraw (2.7,0.9)
circle[radius=1.5pt]; \filldraw (2.4,1.2) circle[radius=1.5pt];
\filldraw (2.1,1.2) circle[radius=1.5pt]; \draw[](3,0)--(2.7,0.3) --
(2.7,0.6)--
 (2.7,0.9)-- (2.4,1.2) -- (2.1,1.2);
\node[] at (0.8,0.8) {$x\wedge y$ }; \node[] at (2.55,2.55) {$x\vee
y$}; \node[] at (1.65,1.65) {$Proj$};

\end{tikzpicture}
\caption{Characterization of associative quasitrivial nondecreasing
operations on finite chains}
\end{figure}
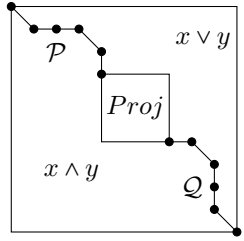
\end{theorem}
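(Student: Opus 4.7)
The plan is to apply the Algorithm introduced just before the statement and verify that each of its outputs matches the description in the theorem. First I would start with $Q_1 = L_k^2$ and iterate the case analysis (I, II, III). At each Step $i$, the values $F(a,b)$ and $F(b,a)$ (where $(a,b)$ is the top-left corner of $Q_i$) are in $\{a,b\}$ by quasitriviality, so exactly one of the five subcases I(a), I(b), II(a), II(b), III(a), III(b) applies. Cases I and III strictly decrease the size of $Q_i$ (by shrinking one or two sides), so after at most $k-1$ iterations the algorithm either hits a case II subcase or reduces $Q_i$ to a singleton; in either situation the process stops after some $l < k$ steps. This defines the path $\mathcal{P} = (p_j)_{j=1}^l$, whose reflection $\mathcal{Q} = (q_j)_{j=1}^l$ across $\Delta_{L_k}$ is obtained automatically from the symmetry of the case analysis in $F(a,b)$ vs.\ $F(b,a)$.

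Next I would verify the values of $F$ along $\mathcal{P} \cup \mathcal{Q}$. If $\overline{p_i,p_{i+1}}$ is horizontal we are in Case I(a), forcing $F(p_i) = F(q_i) = x \wedge y$; vertical corresponds to Case I(b), giving $x \vee y$; diagonal corresponds to Case III(a) or III(b), where by Lemma~\ref{limp}(a) or Remark~\ref{rimp}(a) the value at $p_i$ is $x$ or $y$, and then the value at $q_i$ is the remaining one of the two (since $F(a,b) \ne F(b,a)$ in these cases). To propagate these values into the interiors of the regions above and below $\mathcal{P}\cup\mathcal{Q}$, I would invoke Observation~\ref{l1}: whenever Case I(a) sets $F(x,y) = x$ at a point of $\mathcal{P}$ with $x < y$, Observation~\ref{l1} immediately gives $F(x,y') = x$ for every $y' \in [x,y]$, and iterating downward through the nested boxes $Q_1 \supsetneq Q_2 \supsetneq \cdots$ fills the entire region above the path with $x \wedge y$ and the region below with $x \vee y$, while the symmetric argument with $\mathcal{Q}$ fills the mirror regions.

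I would then identify the endpoint $(a,b)$ with the half-neutral elements. If the algorithm terminates via Case II, then $F = \mathrm{Proj}_x$ or $F = \mathrm{Proj}_y$ on the terminal square $[a,b]^2$, and Lemma~\ref{labin} together with Corollary~\ref{corcases} forces $\{a,b\} = \{f,e\}$ with $f < e$ (Proj$_x$ case) or $e < f$ (Proj$_y$ case); if the algorithm terminates by shrinking to a singleton $a = b$, the existence of the neutral element at that point follows from the global shape already produced, so $e = f = a$. Together with Proposition~\ref{cor} this pins down which of $\mathrm{Proj}_x$ or $\mathrm{Proj}_y$ occurs on $[a,b]^2$ according to whether $a = f, b = e$ or $a = e, b = f$.

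Finally, the symmetry assertion is a bookkeeping statement: outside $[a,b]^2$ every value was forced to be $x \wedge y$ or $x \vee y$, both of which are symmetric; on $\mathcal{P} \cup \mathcal{Q}$, horizontal and vertical positions are symmetric since $F(p_i) = F(q_i)$ by Case I, while asymmetry can occur only at the diagonal positions of $\mathcal{P}$ and $\mathcal{Q}$ and on $[a,b]^2$, as claimed. The main obstacle I anticipate is the careful inductive verification that after Step $i$, the partial contour plot produced so far is consistent with the hypotheses needed to re-apply Lemma~\ref{limp} and Remark~\ref{rimp} on the smaller square $Q_{i+1}$; this requires noting that the restriction $F|_{Q_{i+1}^2}$ remains associative, quasitrivial and nondecreasing, which is immediate from the definitions since $Q_{i+1}$ is a subinterval of $L_k$.
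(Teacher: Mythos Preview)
Your proposal is correct and follows essentially the same approach as the paper: the paper's own proof is a single sentence stating that the theorem follows directly from the Algorithm and the definition of the paths $\mathcal{P}$ and $\mathcal{Q}$, and your plan is simply a detailed unpacking of that claim via the case analysis I/II/III together with Observation~\ref{l1}, Lemma~\ref{limp}, Remark~\ref{rimp}, and Lemma~\ref{labin}. One minor slip to fix: in your second paragraph you say the region \emph{above} the path is filled with $x\wedge y$ and the region \emph{below} with $x\vee y$, but the theorem (and your own correct description of Case~I(a) immediately preceding this) gives the opposite assignment.
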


\begin{proof}
The statement is clearly follows from the Algorithm and the
definition of paths $\mathcal{P}$ and $\mathcal{Q}$.
\end{proof}

%\begin{corollary}
%Every associative quasitrivial nondecreasing binary operation $F:L_k^2\to L_k$ is symmetric except on $[a,b]^2$, where $a$ and $b$ the half-neutral elements of $F$ and on the paths $\mathcal{P}$ and $\mathcal{Q}$ associated to $F$ as in Theorem \ref{tfo1}.
%\end{corollary}

The converse statement can be formalized as follows. This statement
plays the role of theorem of Martin-Mayor-Torrens \cite[Theorem
4.]{Martin2003} for finite chains.

\begin{theorem}\label{thmchar}
Let $\mathcal{P}=(p_j)_{j=1}^l$ be a downward-right path in
$T_1\subset L_k^2$ from $(1,k)$ to $(a,b)$ $(a\le b)$ and let
$\mathcal{Q}=(q_j)_{j=1}^l$ be its reflection to the diagonal
$\Delta_{L_k}$. Let $F:L_k^2\to L_k$ be defined for every
$(x,y)\not\in\mathcal{P}\cup\mathcal{Q}$ as
$$F(x,y)=\begin{cases}
x\vee y, &\textrm{ if } (x,y) \textrm{ is above } \mathcal{P} \cup \mathcal{Q},\\
x\wedge y, &\textrm{ if } (x,y) \textrm{ is below } \mathcal{P} \cup \mathcal{Q},\\
Proj_x \textrm{ or } Proj_y \textrm{ (uniformly)}, &\textrm{ for every } (x,y)\in [a,b]^2. %\textrm{ if } (x,y)\in [a,b]^2.
\end{cases}$$
and for every $(x,y)\in\mathcal{P}\cup\mathcal{Q}$
$$F(x,y)=\begin{cases}
%Proj_x \textrm{ or } Proj_y , &\textrm{ for every } (x,y)\in [a,b]^2,\\
x\wedge y  &\textrm{ if } (x,y)=p_i \textrm{ or } q_i \textrm{ and } \overline{p_i,p_{i+1}} \textrm{ is horizontal}, \\
x\vee y, &\textrm{ if } (x,y)=p_i \textrm{ or } q_i \textrm{ and } \overline{p_i,p_{i+1}} \textrm{ is vertical,}\\
x \textrm{ or } y \textrm{ (arbitrarily) }, &\textrm{ if } (x,y)=p_i
\textrm{ and } \overline{p_i,p_{i+1}} \textrm{ is diagonal.}
\end{cases}$$
If $(x,y)=q_i$ and $\overline{q_i,q_{i+1}} \textrm{ (or equivalently
}\overline{p_i,p_{i+1}}) \textrm{ is diagonal,}$ then $F(x,y)\in
\{x, y\}$ and $F(x,y)\ne F(y,x)$ uniquely define $F(x,y)$. Then $F$
is associative quasitrivial and nondecreasing.
%Let also given the value of $F$ in the top elements of a diagonal step of $\mathcal{P}$ and in $(a,b)$ if $\mathcal{P}$ is ended in $(a,b)$ and $a\ne b$. Then a associative quasitrivial nondecreasing operation $F$ is uniquely determined.
%Moreover, if the path $\mathcal{P}$ contains moves diagonally, then in the top-left element $(x,y)\in \mathcal{P}$ of that move the value of $F$ can be chosen arbitrarily $x_{p_i}$ or $y_{p_i}$.
\end{theorem}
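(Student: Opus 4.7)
First, I would verify the easy properties. Quasitriviality is immediate from the definition, as every value assigned to $F(x,y)$ lies in $\{x,y,x\wedge y,x\vee y\}\subseteq\{x,y\}$. For nondecreasingness, I would fix $x\in L_k$ and argue that the map $y\mapsto F(x,y)$ is nondecreasing; the case of the first variable is symmetric. Because $\mathcal{P}$ is a downward-right path and $\mathcal{Q}$ is its reflection across $\Delta_{L_k}$, the column $\{x\}\times L_k$ is partitioned into contiguous segments on which $F$ takes, from bottom to top, the value $x\wedge y$ (hence equals $x$), possibly a uniform projection on $\{x\}\times[a,b]$, and then $x\vee y$. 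Monotonicity within each block is obvious, while the downward-right condition on $\mathcal{P}$ ensures that the values prescribed on $\mathcal{P}\cup\mathcal{Q}$ dovetail with the neighbouring blocks; the analogue for rows completes the verification.

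The main obstacle is proving associativity. My plan is to argue by contradiction using Lemma \ref{lpic}: if $F$ is not associative, then there exist pairwise distinct $x,y,z\in L_k$ realising one of the configurations (a)--(d) of Figure \ref{fig111}, each of which exhibits a rectangle with one vertex on $\Delta_{L_k}$ whose three remaining corners carry pairwise distinct values of $F$. I would do a case analysis on where these three off-diagonal corners lie with respect to the four regions into which $\mathcal{P}\cup\mathcal{Q}$ and $[a,b]^2$ partition $L_k^2$. In the "above" region $F=\vee$, so $F(u,v)=F(v,u)$, preventing the asymmetric pattern required by (a)--(d); analogously in the "below" region. Inside $[a,b]^2$, $F$ is a single uniform projection, hence associative when restricted, and no forbidden rectangle can arise there. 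Thus at least one corner of the putative bad rectangle must lie on $\mathcal{P}\cup\mathcal{Q}$, and I would use the segment-type rules (horizontal forces $\wedge$, vertical forces $\vee$, diagonal forces a projection compatible with Lemma \ref{limp} or Remark \ref{rimp}) to rule out each of (a)--(d). The downward-right monotonicity of $\mathcal{P}$ is precisely what blocks the "fake" configurations identified in the proof of Lemma \ref{lpic}.

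As a cross-check, I would also run an induction on the length $l$ of $\mathcal{P}$. The base case is when $[a,b]^2$ already covers $L_k^2$, so that $F\in\{\mathrm{Proj}_x,\mathrm{Proj}_y\}$, which is trivially associative. In the inductive step the first segment $\overline{p_1,p_2}$ is horizontal, vertical, or diagonal, and the corresponding clause of the Algorithm (I.(a), I.(b), II, or III) pins down the action of $F$ on the outer row and column. In the horizontal and vertical cases this reduces the problem to the same construction on $L_k\setminus\{1\}$ (respectively $L_k\setminus\{k\}$) with the shorter path $(p_j)_{j=2}^l$, and associativity equations involving the peeled index can be checked by direct substitution using the $\wedge$ or $\vee$ values. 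I expect the diagonal case to be the most delicate: there the free choice between $\mathrm{Proj}_x$ and $\mathrm{Proj}_y$ at $p_i$, together with the forced opposite choice at $q_i$ enforced by the clause $F(p)\ne F(q)$, must be propagated consistently through every triple that straddles the segment. This bookkeeping burden is why I would prefer the direct Lemma \ref{lpic} argument as the primary route, using the induction only to handle the diagonal configurations cleanly.
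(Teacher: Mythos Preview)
Your plan is essentially the paper's: quasitriviality and nondecreasingness are disposed of directly, and associativity is obtained by contradiction through Lemma~\ref{lpic}. The paper's execution of that last step is sharper than your region-by-region case split: writing the three elements as $u<v<w$, it observes that each configuration of Figure~\ref{fig111} forces $F(u,w)\ne F(w,u)$ while $F$ is not a projection on $[u,w]^2$, so by the very definition of $F$ the point $(u,w)$ must be some $p_i$ with $\overline{p_i,p_{i+1}}$ diagonal; the shape of $F$ around such a point (as in Lemma~\ref{limp}(a) / Remark~\ref{rimp}(a)) then yields $F(u,v),F(v,u),F(w,v),F(v,w)\ne v$, which kills all four pictures at once and renders your inductive cross-check unnecessary.
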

\proof It is clear that $F$ is defined for every $(x,y)\in L_k^2$
and $F$ is quasitrivial and nondecreasing. Now we show that $F$ is
associative. If it is not the case, then by Lemma \ref{lpic}, one of
the cases of Figure \ref{fig111} is realized. Let $u,v,w \in L_k$
($u<v<w$) denote the elements where its realized. Clearly $F(u,w)\ne
F(w,u)$ and $F$ is not a projection on $[u,w]^2$. Thus, by the
definition of $F$, it follows that $(u,w)\in \mathcal{P}$ and
$(w,u)\in \mathcal{Q}$. Hence $p_i=(u,w)$ for some $i=\{1, \dots,
l-1\}$ and $\overline{p_i, p_{i+1}}$ is diagonal. Thus we have one
of the following situation (Figure \ref{figrem}).
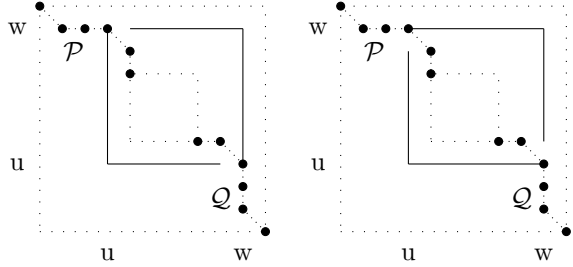
\begin{figure}[ht]
\begin{tikzpicture}
 %C/.style = {circle,thick,draw, inner sep=2pt},
%\draw (0,0)-- (0,3)--(3,3)--(3,0) -- cycle;
%\filldraw[red]
%\draw (0,0)--(3,3);
%\node[] at (2,7.3) {};

\draw (0.9,2.7)-- (0.9,0.9); \draw (1.2,2.7)-- (2.7,2.7); \draw
(2.7,0.9)--(2.7,2.7); \draw (2.4,0.9)--(0.9, 0.9); \node[] at
(0.9,-0.3) {u};
%\node[] at (1.2,3.3) {u+1};
\node[] at (2.7,-0.3) {w};
%\node[] at (2.4,3.3) {w-1};

\node[] at (-0.3,2.7) {w}; \node[] at (-0.3,0.9) {u};
%\draw[loosely dotted]
%(0,3)--(3,3)--(3,0)--(0,0)--cycle;
\filldraw (0,3) circle[radius=1.5pt];

\filldraw (0.3,2.7) circle[radius=1.5pt]; \filldraw (0.6,2.7)
circle[radius=1.5pt]; \node[] at (0.45,2.4) {$\mathcal{P}$};

\filldraw (0.9,2.7) circle[radius=1.5pt]; \filldraw (1.2,2.4)
circle[radius=1.5pt];
%\filldraw[red] [->] (0.9,2.7) -- (0.9,2.4);
%\filldraw[red] [->] (0.9,2.7) -- (1.2,2.7);
%\filldraw[red] [->] (0.9,2.7) -- (1.15,2.45);

\filldraw(1.2,2.1) circle[radius=1.5pt];

%\node[] at (-0.3,3) {a};
%\filldraw (0,3) circle[radius=1.5pt];
%\node[] at (0.4,-0.3) {a};
%\filldraw (0.4,0) circle[radius=1.5pt];
\draw[loosely dotted] (0,3)--(3,3)--(3,0)--(0,0)--cycle;
\draw[dotted]
(0,3)--(0.3,2.7)--(0.6,2.7)--(0.9,2.7)--(1.2,2.4)--(1.2,2.1);
\draw[loosely dotted]
(1.2,2.1)--(1.2,1.2)--(2.1,1.2)--(2.1,2.1)--cycle; \filldraw (3,0)
circle[radius=1.5pt]; \filldraw (2.7,0.3) circle[radius=1.5pt];
\filldraw (2.7,0.6) circle[radius=1.5pt]; \node[] at (2.4,0.45)
{$\mathcal{Q}$}; \filldraw (2.7,0.9) circle[radius=1.5pt]; \filldraw
(2.4,1.2) circle[radius=1.5pt]; \filldraw (2.1,1.2)
circle[radius=1.5pt]; \draw[dotted](3,0)--(2.7,0.3) -- (2.7,0.6)--
 (2.7,0.9)-- (2.4,1.2) -- (2.1,1.2);

\draw (4.9,2.4)-- (4.9,0.9); \draw (4.9,2.7)-- (6.7,2.7); \draw
(6.7,1.2)--(6.7,2.7); \draw (6.7,0.9)--(4.9, 0.9); \node[] at
(4.9,-0.3) {u}; \node[] at (6.7,-0.3) {w}; \node[] at (3.7,2.7) {w};
\node[] at (3.7,0.9) {u};
%\draw[loosely dotted]
%(0,3)--(3,3)--(3,0)--(0,0)--cycle;
\filldraw (4,3) circle[radius=1.5pt];

\filldraw (4.3,2.7) circle[radius=1.5pt]; \filldraw (4.6,2.7)
circle[radius=1.5pt]; \node[] at (4.45,2.4) {$\mathcal{P}$};

\filldraw (4.9,2.7) circle[radius=1.5pt]; \filldraw (5.2,2.4)
circle[radius=1.5pt];
%\filldraw[red] [->] (0.9,2.7) -- (0.9,2.4);
%\filldraw[red] [->] (0.9,2.7) -- (1.2,2.7);
%\filldraw[red] [->] (0.9,2.7) -- (1.15,2.45);

\filldraw(5.2,2.1) circle[radius=1.5pt];

%\node[] at (-0.3,3) {a};
%\filldraw (0,3) circle[radius=1.5pt];
%\node[] at (0.4,-0.3) {a};
%\filldraw (0.4,0) circle[radius=1.5pt];
\draw[loosely dotted] (4,3)--(7,3)--(7,0)--(4,0)--cycle;
\draw[dotted]
(4,3)--(4.3,2.7)--(4.6,2.7)--(4.9,2.7)--(5.2,2.4)--(5.2,2.1);
\draw[loosely dotted]
(5.2,2.1)--(5.2,1.2)--(6.1,1.2)--(6.1,2.1)--cycle; \filldraw (7,0)
circle[radius=1.5pt]; \filldraw (6.7,0.3) circle[radius=1.5pt];
\filldraw (6.7,0.6) circle[radius=1.5pt]; \node[] at (6.4,0.45)
{$\mathcal{Q}$}; \filldraw (6.7,0.9) circle[radius=1.5pt]; \filldraw
(6.4,1.2) circle[radius=1.5pt]; \filldraw (6.1,1.2)
circle[radius=1.5pt]; \draw[dotted](7,0)--(6.7,0.3) -- (6.7,0.6)--
 (6.7,0.9)-- (6.4,1.2) -- (6.1,1.2);

 \end{tikzpicture}
\caption{Two remaining cases}\label{figrem}
\end{figure}

Therefore, since $u<v<w$, it follows that $F(u,v)\ne v, F(v,u)\ne v,
F(w,v)\ne v, F(v,w)\ne v$. Hence, none of the cases of Figure
\ref{fig111} can be realized. Thus $F$ is associative. \qed
\begin{remark}
According to Theorems \ref{tfo1} and \ref{thmchar} it is clear that
there is a surjection from the set of associative quasitrivial
nondecreasing operations defined on $L_k$ to the downward-right
paths defined on $T_1$ and started at $(1,k)$ (and ended somewhere
in $T_1$). This surjection is a bijection if and only if the path
$\mathcal{P}$ does not contain a diagonal move and $a=b$. This
condition is equivalent that $F$ is symmetric (and has a neutral
element).
\end{remark}
\begin{corollary}\label{cchar}
Let $F:L_k^2\to L_k$ be an associative quasitrivial nondecreasing
operation. If $F$ is symmetric, then it is uniquely determined by a
downward-right path $\mathcal{P}$ containing only horizontal and
vertical line segments and it starts at $(1,k)$ and reaches the
diagonal $\Delta_{L_k}$.
\end{corollary}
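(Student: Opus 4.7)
The plan is to read off the conclusion directly from the Algorithm of Section \ref{s4} together with Theorems \ref{tfo1} and \ref{thmchar}, under the extra hypothesis that $F$ is symmetric. Concretely, I would show three things: (i) in every iteration the Algorithm can only execute a Case I step; (ii) the Algorithm therefore terminates on the diagonal $\Delta_{L_k}$; (iii) once $\mathcal{P}$ has no diagonal segment and $a=b$, Theorem \ref{thmchar} determines $F$ pointwise with no free choice.

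For step (i), I would run through the three case-types of the Algorithm applied to the current square $Q_i=[a,b]^2$ with $a<b$. In Case III(a) one has $F(a,b)=a\neq b=F(b,a)$ and in Case III(b) one has $F(a,b)=b\neq a=F(b,a)$, both of which immediately violate $F(x,y)=F(y,x)$. In Case II(a) the Algorithm concludes that $F$ agrees with $\mathrm{Proj}_x$ on $[a,b]^2$, so $F(a,b)=a\neq b=F(b,a)$; Case II(b) is analogous with $\mathrm{Proj}_y$. Hence the symmetry of $F$ forces every step to be of type I(a) or I(b), and by the description of these two cases the corresponding segment $\overline{p_i,p_{i+1}}$ is horizontal (in Case I(a)) or vertical (in Case I(b)). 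This proves that $\mathcal{P}$ contains no diagonal move.

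For step (ii), observe that each Case I step strictly reduces the side length of $Q_i$ by one. Since the process cannot escape via Cases II or III, it must continue until $Q_l$ is a single point, i.e.\ until $a=b$. The endpoint $p_l$ of $\mathcal{P}$ is then this common value on $\Delta_{L_k}$. For step (iii), uniqueness, I would invoke Theorem \ref{thmchar}: on a path $\mathcal{P}$ with only horizontal and vertical segments ending on $\Delta_{L_k}$, the reconstruction formulas leave no ambiguity, since the ``free'' choices in that theorem appear precisely at diagonal segments of $\mathcal{P}$ and on the terminal square $[a,b]^2$ with $a<b$ (where one picks $\mathrm{Proj}_x$ or $\mathrm{Proj}_y$), both of which are absent here. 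The only mild point to be careful about is the very last vertex $p_l=(a,a)$, where $F(a,a)=a$ holds automatically by quasitriviality, so no additional data is needed. This is a straightforward verification rather than a real obstacle; the whole argument reduces to a clean case-check against the Algorithm.
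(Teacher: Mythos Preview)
Your proposal is correct and follows essentially the same approach as the paper. The paper does not give a separate proof for this corollary but treats it as an immediate consequence of the remark preceding it (the surjection from operations to paths is a bijection precisely when $\mathcal{P}$ has no diagonal move and $a=b$, which is equivalent to $F$ being symmetric) together with the last clause of Theorem~\ref{tfo1}; your argument simply unpacks this by running through the Algorithm's case distinction explicitly, which is a faithful and slightly more detailed rendering of the same reasoning.
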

As a consequence of the previous corollary we obtain the result of
\cite[Theorem 4.]{Beats2009} (see also \cite[Theorem
14.]{Jimmy2017}).
\begin{corollary}\label{ccard}
The number of associative quasitrivial nondecreasing symmetric
operation defined on $L_k$ is $2^{k-1}$.
\end{corollary}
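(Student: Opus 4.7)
The plan is to read off the count directly from Corollary \ref{cchar}, which asserts that every associative quasitrivial nondecreasing symmetric operation $F\colon L_k^2\to L_k$ is uniquely determined by a downward-right path $\mathcal{P}$ starting at $(1,k)$, built out of horizontal and vertical unit segments only, and terminating as soon as $\mathcal{P}$ reaches the diagonal $\Delta_{L_k}$. Conversely, Theorem \ref{thmchar} shows that every such path produces an operation of this form. Hence the set of operations is in bijection with the set of such lattice paths, and the problem reduces to a pure path-counting exercise.

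For the count, I would stratify the paths by their endpoint on the diagonal. Suppose a given path terminates at $(a,a)$ with $a\in\{1,\dots,k\}$. Because the only permitted moves are one step to the right or one step downward, such a path must contain exactly $a-1$ horizontal steps (to move the first coordinate from $1$ to $a$) and $k-a$ vertical steps (to move the second coordinate from $k$ down to $a$). The number of orderings of these steps is
$$\binom{(a-1)+(k-a)}{a-1}=\binom{k-1}{a-1}.$$

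Summing over the possible endpoints of $\mathcal{P}$ on the diagonal yields
$$\sum_{a=1}^{k}\binom{k-1}{a-1}=\sum_{i=0}^{k-1}\binom{k-1}{i}=2^{k-1},$$
by the binomial theorem, which is the desired count.

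There is no real obstacle here; all of the substantive work sits inside Corollary \ref{cchar} and the algorithm preceding Theorem \ref{tfo1}. The only point that warrants a brief check is that the correspondence path $\longleftrightarrow$ operation is truly a bijection in the symmetric setting: distinct paths give distinct operations (since the path determines the contour plot of $F$) and no path is counted twice (since $\mathcal{P}$ stops the first time it touches $\Delta_{L_k}$, making the endpoint $(a,a)$ unambiguous). Once this is noted, the proof is essentially a one-line binomial identity.
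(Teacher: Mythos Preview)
Your proof is correct and follows the same reduction as the paper, namely using Corollary~\ref{cchar} (together with Theorem~\ref{thmchar}) to identify the operations bijectively with horizontal/vertical lattice paths from $(1,k)$ to the diagonal, and then counting those paths. The paper's count is marginally more direct: since each horizontal or vertical step decreases the gap $b-a$ by exactly one, every such path has precisely $k-1$ steps and hence $2^{k-1}$ binary choices, which also shows that your ``first time it touches $\Delta_{L_k}$'' caveat is automatically satisfied and renders the stratification-plus-binomial-sum unnecessary.
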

\begin{proof}
Every path from $(1,k)$ to the diagonal  $\Delta_{L_k}$ using right
or downward moves contains $k$ points. According to Corollary
\ref{cchar}, in each point of the path, except the last one, we have
two options which direction we move further. This immediately
implies that the number of associative quasitrivial nondecreasing
symmetric operation defined on $L_k$ is $2^{k-1}$.\end{proof}

In Theorem \ref{tnumb}, as an application of the results of this section,  we calculate the number of associative quasitrivial nondecreasing operations defined on $L_k$ and also the number of associative quasitrivial nondecreasing operations on $L_k$ that have neutral elements. %This is a kind of an analogue of Corollary \ref{ccard}.

\begin{remark}
\begin{enumerate}
\item[(a)]
We note that from the proof of Lemma \ref{limp}  throughout this section  %Theorem \ref{thmchar}
we essentially use that $F$ is defined on a finite chain.
\item[(b)] In the continuous case \cite{Czogala1984, Martin2003} and also in the symmetric case \cite{Beats2009, Jimmy2017} it is always possible to define a one variable function $g$, such that the extended graph of $g$ separates the points of the domain of the binary operation $F$ into two parts where $F$ is a minimum and a maximum, respectively. Now the paths $\mathcal{P}$ and $\mathcal{Q}$ play the role of the extended graph of $g$. Because of the diagonal moves of the path $\mathcal{P}$, it does not seems so clear how such a 'separating' function can be defined in the non-symmetric discrete case.
\end{enumerate}
\end{remark}
%Since $\mathcal{Q}$ is a reflection of $\mathcal{P}$ to the diagonal $\Delta_{L_k}$ and $\mathcal{P}$ is a downward-right path,
%it is possible to define a nondecreasing operation $g:([1,e]\cup [f,k])^2\to ([1,e]\cup [f,k])$ such that extended graph $\Gamma_g$ of $g$ is symmetric to the diagonal $\Delta_{L_k}$.
%Indeed, let $g$ be defined for every $m\in ([1,e]\cup [f,k]$ as $g(m)=\max{(p_i)_2: (p_i)_2=m}$
% ez nem teljesen jo igy.
%\begin{proposition}
%    For every associative quasitrivial nondecreasing operation $F:L_k^2\to L_k$ we can associative a
%\end{proposition}
%Thus each step the path $\mathcal{P}$
%Dinamical way.

%\begin{theorem}\label{thmchar}
%Characterization, with paths arrows
%\end{theorem}
%We can think on this path as a extended graph of a nonincreasing operation. Then we get
%\begin{corollary}
% DeBeats.
%\end{corollary}

%++++++++++++++++++++++++++++

%Result connected to DeBEATS et al.

%++++++++++++++++++++++++++++

%Number of associative quasitrivial nondecreasing operations and the number of associative quasitrivial nondecreasing operations having a neutral element.

%\begin{remark}

%\end{remark}

\section{Bisymmetric operations}\label{s5}
In this section we show a characterization of bisymmetric quasitrivial nondecreasing binary operations based on the previous section.% We start our investigation with the binary case.
%\subsection{Binary case}
The following statement was proved as \cite[Lemma 22.]{Jimmy2017}.
\begin{lemma}\label{lba}
Let $X$ be an arbitrary set and $F:X^2 \to X$ be an operation. Then
the following assertions hold.
\begin{enumerate}
    \item[(a)] If $F$ is bisymmetric and has a neutral element, then it is associative and symmetric.
    \item[(b)] If $F$ is bisymmetric and quasitrivial, then $F$ is associative.
    \item[(c)] If $F$ is associative and symmetric, then it is bisymmetric.
\end{enumerate}
\end{lemma}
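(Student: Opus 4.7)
For part (c), I would proceed by a direct calculation that starts from the bisymmetric equation and reduces it to a sequence of applications of associativity interleaved with one application of symmetry. Explicitly, I would expand
$$F(F(a,b),F(c,d)) = F(a, F(b, F(c,d))) = F(a, F(F(b,c), d))$$
by associativity, then swap the inner pair via symmetry, $F(F(b,c),d) = F(F(c,b),d)$, and finally re-associate twice to obtain $F(a, F(c, F(b,d))) = F(F(a,c), F(b,d))$. This is pure symbol pushing, and I do not foresee any obstacle here.

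For part (a), I would exploit the neutral element $e$ by specializing the variables in the bisymmetry equation
$$F(F(x_1,x_2), F(x_3,x_4)) = F(F(x_1,x_3), F(x_2,x_4))$$
one at a time. Setting $x_2=e$ collapses $F(x_1,e)$ to $x_1$ and $F(e,x_4)$ to $x_4$, yielding $F(x_1, F(x_3,x_4)) = F(F(x_1,x_3), x_4)$, which is associativity. Setting $x_4=e$ in the original equation and using the newly established associativity gives $F(x_1, F(x_2,x_3)) = F(x_1, F(x_3,x_2))$ for all $x_1,x_2,x_3$; specializing $x_1=e$ then collapses this to $F(x_2,x_3) = F(x_3,x_2)$, i.e.\ symmetry. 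The only care needed is to apply the substitutions in the right order so that each step uses only what is already proved.

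For part (b), quasitriviality implies $F(x,x)=x$ (idempotency), which again combines cleanly with bisymmetry. Substituting $x_1=x_2=x$ and $x_3=x_4=z$ in the bisymmetric equation produces the two identities
\begin{equation*}
F(x, F(y,z)) \;=\; F(F(x,y), F(x,z)), \qquad F(F(x,y), z) \;=\; F(F(x,z), F(y,z)).
\end{equation*}
Hence proving associativity reduces to verifying $F(F(x,y), F(x,z)) = F(F(x,z), F(y,z))$ for all $x,y,z$. This is the step I expect to be the main obstacle: there is no further substitution that kills the equation outright, so I would finish by a finite case analysis driven by quasitriviality. Each of $F(x,y), F(x,z), F(y,z)$ is one of two elements of $\{x,y,z\}$, giving at most eight cases; in each case both sides reduce, via quasitriviality of the outer $F$, to the same element of $\{x,y,z\}$. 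For instance, if $F(x,y)=x$ then the left-hand side becomes $F(x, F(x,z))$, which by quasitriviality lies in $\{x, F(x,z)\}\subseteq\{x,z\}$, and a parallel simplification of the right-hand side together with the other two bisymmetry substitutions pins down the common value. Running through the remaining cases symmetrically completes the verification of associativity.
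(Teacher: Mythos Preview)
The paper does not supply its own proof of this lemma; it is quoted as \cite[Lemma~22]{Jimmy2017} and used as a black box, so there is nothing in the paper to compare your argument against.

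That said, your plan is essentially the standard one and is sound. Parts (a) and (c) are routine specializations and re-associations, exactly as you describe. For part (b), your reduction is correct: idempotency (from quasitriviality) together with bisymmetry gives the two identities $F(x,F(y,z))=F(F(x,y),F(x,z))$ and $F(F(x,y),z)=F(F(x,z),F(y,z))$, and associativity then amounts to $F(F(x,y),F(x,z))=F(F(x,z),F(y,z))$. The eight-case analysis on the values of $F(x,y),F(x,z),F(y,z)$ does close this; in the cases that are not immediate (for example $F(x,y)=y$, $F(x,z)=x$, $F(y,z)=z$) one has to feed the assumed values back into one of the two derived identities to pin down an auxiliary value such as $F(y,x)$, just as you hint. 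One small slip of the pen: the sentence ``Substituting $x_1=x_2=x$ and $x_3=x_4=z$'' does not literally yield both displayed identities at once; you need two separate substitutions, $x_1=x_2$ for the first and $x_3=x_4$ for the second.
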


%We note that this is not the case in higher dimension (see Example \ref{exbi}). % every weighted arithmetic sum of the form   is bisymmetric
%but only the sum is associative.
Using also the results of Section \ref{s4} we get the following
statement.
\begin{theorem}\label{thmbi}
Let $F:L_k^2\to L_k$ be a bisymmetric quasitrivial nondecreasing
operation. Then there exists the upper half-neutral element $e$ and
the lower half-neutral element $f$ and $F$ is symmetric on
$(L_k\setminus[e\wedge f,e\vee f])^2$.
\end{theorem}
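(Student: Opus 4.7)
The plan is to derive associativity from bisymmetry, extract $e$ and $f$ from the symmetrizations, and then check symmetry on $(L_k\setminus[e\wedge f,e\vee f])^2$ by case analysis, with the decisive case handled by a single bisymmetry identity. First, Lemma \ref{lba}(b) gives that $F$ is associative. By Proposition \ref{prop:Fodor}, the upper and lower symmetrizations $F_1$ and $F_2$ are therefore associative, and since they are quasitrivial, nondecreasing, and symmetric by construction, Proposition \ref{prop:ane} supplies them with neutral elements, which are by definition the upper half-neutral $e$ and the lower half-neutral $f$ of $F$.

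For the symmetry claim, fix $(x,y)\in(L_k\setminus[e\wedge f,e\vee f])^2$. If $x,y<e\wedge f$, then by Proposition \ref{cor} (in whichever of the sub-cases $e\le f$ or $f\le e$ applies), $F(x,y)=x\wedge y=F(y,x)$; analogously, if $x,y>e\vee f$, then $F(x,y)=x\vee y=F(y,x)$. The nontrivial case is, without loss of generality, $x<e\wedge f$ and $y>e\vee f$; I handle this by applying bisymmetry to the $2\times 2$ matrix $\begin{pmatrix}f & x\\ y & e\end{pmatrix}$, which yields
\[
F\bigl(F(f,x),F(y,e)\bigr)\;=\;F\bigl(F(f,y),F(x,e)\bigr).
\]

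The four inner products simplify cleanly. Since $f$ is the neutral element of $F_2$, $F(f,x)=x$ (as $x\le f$) and $F(y,f)=y$ (as $y\ge f$); since $e$ is the neutral element of $F_1$, $F(x,e)=x$ (as $x\le e$) and $F(e,y)=y$ (as $y\ge e$); and the (unlabeled) corollary following Corollary \ref{corcases}, which asserts $F(a,b)=F(b,a)$ whenever exactly one of $a,b$ lies in $[e\wedge f,e\vee f]$, upgrades these to $F(f,y)=F(y,f)=y$ and $F(y,e)=F(e,y)=y$. Substituting into the bisymmetry identity, the left-hand side collapses to $F(x,y)$ and the right-hand side to $F(y,x)$, forcing $F(x,y)=F(y,x)$; the case $y<e\wedge f$ and $x>e\vee f$ follows by swapping the roles of $x$ and $y$. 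The main obstacle is the simultaneous bookkeeping of these four auxiliary values so that the argument proceeds uniformly in the sub-cases $e\le f$ and $f\le e$ (the degenerate $e=f$ case being covered vacuously by Lemma \ref{lba}(a)); once these identities are in hand, the bisymmetry equation reduces to the desired symmetry essentially by inspection.
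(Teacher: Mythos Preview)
Your argument is correct, and it differs from the paper's in the decisive bisymmetry step. Both proofs start identically---associativity via Lemma~\ref{lba}(b), then half-neutrals $e,f$ via the symmetrizations---but for the symmetry claim the paper argues by contradiction: it picks $u<v$ outside $[e\wedge f,e\vee f]$ with $F(u,v)\ne F(v,u)$, invokes Corollary~\ref{corcases}(iii) to put itself in the situation of Lemma~\ref{limp}(a) or Remark~\ref{rimp}(a), and then applies bisymmetry to a matrix with entries drawn from $\{u,u+1,v-1,v\}$ to force $u=v$. Your approach is direct: after disposing of the ``both below'' and ``both above'' cases via Proposition~\ref{cor}, you handle the mixed case by applying bisymmetry to the matrix $\left(\begin{smallmatrix}f&x\\y&e\end{smallmatrix}\right)$, with the half-neutrals themselves as auxiliary points, and reduce the identity to $F(x,y)=F(y,x)$ using the unnamed corollary after Corollary~\ref{corcases}. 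This is arguably cleaner: it avoids the two-case contradiction split in the paper and works uniformly in $e\le f$ and $f\le e$. On the other hand, it leans on that unnamed corollary (and hence still, indirectly, on the Lemma~\ref{limp}/Remark~\ref{rimp} machinery), whereas the paper's proof uses that machinery in a more self-contained, local way at the witness pair $(u,v)$.
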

\begin{proof}
According to Lemma \ref{lba}(b), every quasitrivial bisymmetric
operations are associative. Thus, by Proposition \ref{cor} it has an
upper and lower half-neutral element ($e$ and $f$, respectively).

Let us assume that $e\le f$ (the case when $f\le e$ can be handled
similarly).

If there exists $u,v\in L_k$ such that $u<v$, $F(u,v)\ne F(v,u)$, then %We also assume that $e\le f$.
by Corollary \ref{corcases}, either $u,v \in [e,f]$ (then we do not
need to prove anything) or  $u,v\not\in [e,f]$. Moreover, if
$u,v\not\in [e,f]$, then Lemma \ref{limp}(a) or Remark \ref{rimp}(a)
holds. The existence of $e$ implies that $v-u\ge 2$.

If \begin{equation*}\label{equv} u=F(u,v) \ne F(v,u)=v
\end{equation*} is satisfied, then Lemma \ref{limp} (a) holds (i.e,
$F(x,v)=v$ if $x\in [u+1,v]$ and $F(y,u)=u$ if $y\in[u,v-1]$). Since
$v-u\ge 2$, $u+1\le v-1$, hence $F(u+1,u)=u$. On the other hand, $F$
is monotone and idempotent, thus by Observation \ref{l1}, $F(v,t)=v$
and $F(u,t)=u$ for all $t\in [u,v]$. Using bisymmetric equation we
get the following
\begin{equation*}
u=F(u,v)=F(F(u+1,u),F(v,v-1))=F(F(u+1,v), F(u,v-1))=F(v,u)=v,
\end{equation*}
which is a contradiction.

Similarly, if \begin{equation*}\label{eqvu} v=F(u,v) \ne F(v,u)=u
\end{equation*} is satisfied, then Remark \ref{rimp} (a) holds (i.e,
$F(v,x)=v$ if $x\in [u+1,v]$ and $F(u,y)=u$ if $y\in[u,v-1]$). Since
$v-u\ge 2$, $u+1\le v-1$, hence $F(v-1,v)=v$. Applying Observation
\ref{l1} again, we have $F(t,v)=v$ and $F(t,u)=u$ for all $t\in
[u,v]$. Using bisymmetric equation we get a contradiction as
\begin{equation*}
u=F(v,u)=F(F(v-1,v),F(u,u+1))=F(F(v-1,u), F(v,u+1))=F(u,v)=v.
\end{equation*}
%Similar argument shows the case when $e\le f$ and $b= F(a,b)\ne F(a,b)=a$.
\end{proof}
% Characterization.

Applying Theorem \ref{thmbi} we get the following characterization.

\begin{theorem}\label{tbfo1}
Let $F:L_k^2\to L_k$ be a quasitrivial nondecreasing operation. Then
$F$ is bisymmetric if and only if there exists $a,b\in L_k$ ($a\le
b$) and a downward-right path $\mathcal{P}=(p_j)_{j=1}^l$ (for some
$l\in \mathbb{N}$) from $(1,k)$ to $(a,b)$ containing only
horizontal and vertical line segments such that for every
$(x,y)\not\in\mathcal{P}\cup\mathcal{Q}$
\begin{equation}\label{eqbi1}F(x,y)=\begin{cases}
x\vee y, &\textrm{ if } (x,y) \textrm{ is above } \mathcal{P} \cup \mathcal{Q},\\
x\wedge y, &\textrm{ if } (x,y) \textrm{ is below } \mathcal{P} \cup \mathcal{Q},\\
Proj_x \textrm{ or } Proj_y \textrm{ (uniformly)}, &\textrm{ for every } (x,y)\in [a,b]^2. %\textrm{ if } (x,y)\in [a,b]^2.
\end{cases}
\end{equation}
and for every $(x,y)\in\mathcal{P}\cup\mathcal{Q}$
\begin{equation}\label{eqbi2}F(x,y)=\begin{cases}
%Proj_x \textrm{ or } Proj_y , &\textrm{ for every } (x,y)\in [a,b]^2,\\
x\wedge y  &\textrm{ if } (x,y)=p_i \textrm{ or } q_i \textrm{ and } \overline{p_i,p_{i+1}} \textrm{ is horizontal}, \\
x\vee y, &\textrm{ if } (x,y)=p_i \textrm{ or } q_i \textrm{ and }
\overline{p_i,p_{i+1}} \textrm{ is vertical,}
\end{cases}
\end{equation}
where $\mathcal{Q}=(q_j)_{j=1}^l$ is the reflection of $\mathcal{P}$
to the diagonal $\Delta_{L_k}$.

In particular, $F$ is symmetric on $L_k^2\setminus[a,b]^2$ and one
of the projections on $[a,b]^2$.

 \begin{figure}[ht!]
 \centering
\begin{tikzpicture}
%\node[] at (3,-0.3) {b};
\filldraw (0,3) circle[radius=1.5pt]; \filldraw (0.3,3)
circle[radius=1.5pt]; \filldraw (0.3,2.7) circle[radius=1.5pt];
\filldraw (0.6,2.7) circle[radius=1.5pt]; \node[] at (0.6,2.4)
{$\mathcal{P}$}; \filldraw (0.9,2.4) circle[radius=1.5pt]; \filldraw
(0.9,2.7) circle[radius=1.5pt]; \filldraw (1.2,2.4)
circle[radius=1.5pt];
%\filldraw[red] [->] (0.9,2.7) -- (0.9,2.4);
%\filldraw[red] [->] (0.9,2.7) -- (1.2,2.7);
%\filldraw[red] [->] (0.9,2.7) -- (1.15,2.45);

\filldraw(1.2,2.1) circle[radius=1.5pt];

%\node[] at (-0.3,3) {a};
%\filldraw (0,3) circle[radius=1.5pt];
%\node[] at (0.4,-0.3) {a};
%\filldraw (0.4,0) circle[radius=1.5pt];
\draw[] (0,3)--(3,3)--(3,0)--(0,0)--cycle; \draw[]
(0,3)--(0.3,3)--(0.3,2.7)--(0.6,2.7)--(0.9,2.7)--(0.9,2.4)--(1.2,2.4)--(1.2,2.1);
\draw[] (1.2,2.1)--(1.2,1.2)--(2.1,1.2)--(2.1,2.1)--cycle; \filldraw
(3,0) circle[radius=1.5pt]; \filldraw (3,0.3) circle[radius=1.5pt];

\filldraw (2.7,0.3) circle[radius=1.5pt]; \filldraw (2.7,0.6)
circle[radius=1.5pt]; \node[] at (2.4,0.6) {$\mathcal{Q}$};
\filldraw (2.7,0.9) circle[radius=1.5pt]; \filldraw (2.4,0.9)
circle[radius=1.5pt]; \filldraw (2.4,1.2) circle[radius=1.5pt];
\filldraw (2.1,1.2) circle[radius=1.5pt];
\draw[](3,0)--(3,0.3)--(2.7,0.3) -- (2.7,0.6)--
 (2.7,0.9)--(2.4,0.9) --(2.4,1.2) -- (2.1,1.2);
\node[] at (0.8,0.8) {$x\wedge y$ }; \node[] at (2.55,2.55) {$x\vee
y$}; \node[] at (1.65,1.65) {$Proj$};

\end{tikzpicture}
\caption{Characterization of bisymmetric quasitrivial nondecreasing
operations on finite chains}
\end{figure}
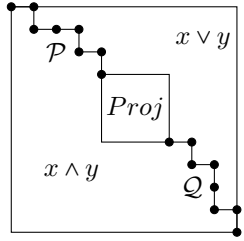
\end{theorem}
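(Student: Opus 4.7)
The plan is to prove the two implications separately: necessity follows quickly from previously established machinery, while sufficiency requires verifying the bisymmetry identity by a case analysis on membership patterns.

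For the forward direction, Lemma~\ref{lba}(b) ensures that a bisymmetric quasitrivial operation is automatically associative, so Theorem~\ref{tfo1} already produces half-neutral elements $a=e\wedge f$ and $b=e\vee f$, a downward-right path $\mathcal{P}$ from $(1,k)$ to $(a,b)$, and the stated values of $F$ outside $[a,b]^2$; all that is left is to exclude diagonal segments on $\mathcal{P}$. I would argue by contradiction. If $\overline{p_i,p_{i+1}}$ is diagonal, then the algorithm of Section~\ref{s4} passes from $Q_i=[a_i,b_i]^2$ to $Q_{i+1}=[a_i+1,b_i-1]^2$ in that step, and since the subsequent $Q_j$'s only shrink further, $a_i<a$ and $b_i>b$. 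Consequently $p_i\in(L_k\setminus[a,b])^2$, and Theorem~\ref{thmbi} forces $F(a_i,b_i)=F(b_i,a_i)$. But Case III of the algorithm, which is precisely the case that produces diagonal segments, is characterized by $F(a_i,b_i)\ne F(b_i,a_i)$, a contradiction.

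For the backward direction, Theorem~\ref{thmchar} immediately yields that $F$ is associative, quasitrivial, and nondecreasing. To verify bisymmetry I would first, assuming without loss of generality that the projection on $[a,b]^2$ is $\mathrm{Proj}_y$, read off the following three rules from the description of $F$ (writing $I=[a,b]$ and $O=L_k\setminus I$): $F(u,v)=v$ whenever $u\in I$; $F(u,v)=u$ whenever $u\in O$ and $v\in I$; and $F(u,v)=G(u,v)$ whenever $u,v\in O$, where $G:=F|_{O^2}$ is well defined by quasitriviality and is symmetric (thanks to the absence of diagonal segments), associative, quasitrivial, and nondecreasing. By Lemma~\ref{lba}(c), $G$ is bisymmetric.

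With these rules in hand, I would verify $F(F(x,y),F(z,w))=F(F(x,z),F(y,w))$ by a case analysis on the $2^4=16$ possible $I/O$-patterns of $(x,y,z,w)$. The all-$I$ pattern reduces to bisymmetry of a projection and the all-$O$ pattern reduces to bisymmetry of $G$. For each of the remaining fourteen patterns, both sides collapse under the three rules either to a common element of $\{x,y,z,w\}$ or to a $G$-iterate on the out-variables, which is then equal on the two sides by associativity and symmetry of $G$. The main obstacle is purely bookkeeping: the six mixed patterns with exactly two in-variables require care in tracking which rule applies to each subcomputation, but once one keeps in mind the guiding principle that an $I$-element is absorbed by any adjacent $O$-element, every case is routine.
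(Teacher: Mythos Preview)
Your proof is correct. The necessity direction matches the paper's argument essentially verbatim: both of you pass through Lemma~\ref{lba}(b), Theorem~\ref{tfo1}, and Theorem~\ref{thmbi} to rule out diagonal segments, and your extra line explaining why $a_i<a$ and $b_i>b$ just makes explicit what the paper leaves to the reader.

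For sufficiency the two approaches diverge. You establish three absorption rules (an $I$-element is swallowed by any adjacent $O$-element, and $F|_{O^2}=G$ is symmetric associative hence bisymmetric by Lemma~\ref{lba}(c)) and then grind through the sixteen $I/O$-patterns. The paper instead exploits associativity directly: it rewrites both sides of the bisymmetry identity as
\[
F(F(u,F(v,w)),z)\quad\text{versus}\quad F(F(u,F(w,v)),z),
\]
so the only question is whether $F(v,w)=F(w,v)$; when it fails, both $v,w$ lie in $[a,b]$ and the single rule $F(x,y)=x$ for $y\in[a,b]$ collapses both expressions to $F(u,z)$. This reduces your fourteen mixed cases to one. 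Your route has the virtue of being completely mechanical and of isolating the restriction $G$ as an honest bisymmetric operation in its own right, but the paper's associativity trick is considerably shorter and worth knowing.
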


\begin{proof}
(Necessity) Since $F$ is bisymmetric and quasitrivial, by Lemma \ref{lba}(b), $F$ is associative. By Theorem \ref{tfo1}, there exist half-neutral elements $a,b\in L_k$ ($a<b$) %such that $e$ and $f$ are the lower-half and upper-half-neutral element respectively.
and a downward-right path $\mathcal{P}$ from $(1,k)$ to $(a,b)$. By
Theorem \ref{tbfo1}, $F$ is symmetric on $L_k^2\setminus[a,b]^2$.
Thus $\mathcal{P}$ does not contain a diagonal line segment. Hence,
applying again Theorem \ref{tfo1} we get that $F$ satisfies
\eqref{eqbi1} and \eqref{eqbi2}.

(Sufficiency) The operation $F$ defined by \eqref{eqbi1} and
\eqref{eqbi2} satisfies the conditions of Theorem \ref{thmchar},
thus $F$ is quasitrivial nondecreasing and associative. Now we show
that $F$ is bisymmetric (i.e, $\forall u,v,w,z\in L_k$
\begin{equation}\label{eqbia}
F(F(u,v),F(w,z))=F(F(u,w),F(v,z)).)\end{equation} Let us assume that
$F(x,y)=Proj_x$ on $[a,b]^2$ (for $F(x,y)=Proj_y$ on $[a,b]^2$ the
proof is analogue). By Corollary \ref{corcases}, this implies that
$a=f$ and $b=e$ ($f<e$) and, by Proposition \ref{cor}, it is clear
that \begin{equation}\label{eqproj}F(x,y)=x \ \forall x \in L_k,
\forall y \in [a,b].\end{equation} Since $F$ is associative, we have
$$F(F(u,v),F(w,z))=F(F(F(u,v),w),z)=F(F(u, F(v,w)),z)$$
and
$$F(F(u,w),F(v,z))=F(F(F(u,w),v),z)=F(F(u, F(w,v)),z).$$

If $F(v,w)=F(w,v)$, then \eqref{eqbia} follows and we are done.

If  $F(v,w)\ne F(w,v)$, then $v,w\in [a,b]^2$ and, since
$F(x,y)=Proj_x$ on $[a,b]^2$, $F(v,w)=v$ and $F(w,v)=w$. Then, by
\eqref{eqproj},
\begin{align*}
&F(F(u, F(v,w)),z)=F(F(u,v),z)=F(u,z),\\
&F(F(u, F(v,w)),z)=F(F(u,w),z)=F(u,z).
\end{align*}
Thus $F$ is bisymmetric.
\end{proof}
\begin{remark}
\begin{enumerate}
    \item[(a)]  There is a one-to-one correspondence between downward-right paths containing only vertical and horizontal line segments and the quasitrivial nondecreasing bisymmetric operations if we fix that the operation is $Proj_x$ on $[a,b]^2$ ($a$ and $b$ are the half neutral-elements of the operation).
    The same is true, if the operation is $Proj_y$ on $[a,b]^2$.
\item[(b)]
The nondecreasing assumption can be substituted by monotonicity. Indeed, %by Lemma \ref{lba}(b), every bisymmetric quasitrivial operation is associative. By
by Corollary \ref{cnem}, monotonicity is equivalent with
nondecreasingness for quasitrivial operations.
\end{enumerate}
\end{remark}

\section{The number of operations of given class}\label{s6}

This section is devoted to calculate the number of associative
quasitrivial nondecreasing operations. Byproduct of the following
argument we also consider the number of associative quasitrivial
nondecreasing operations having neutral elements. With the same
technique one can easily deduce the number of bisymmetric
quasitrivial nondecreasing binary operations (see Proposition
\ref{binumb}).

\begin{theorem}\label{tnumb}
Let $A_k$ denote the number of associative quasitrivial
nondecreasing operations defined on $L_k$ and $B_k$ denote the
number of associative quasitrivial nondecreasing operations defined
on $L_k$ and having neutral elements. Then
$$A_k=\frac{1}{6}\big((2+\sqrt{3})(1+\sqrt{3})^k+(2-\sqrt{3})(1-\sqrt{3})^k-4\big),
$$
$$B_k=\frac{1}{2\cdot\sqrt{3}}\big((1+\sqrt{3})^k-(1-\sqrt{3})^k\big).$$

\end{theorem}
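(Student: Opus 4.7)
The plan is to exploit the bijection from Theorems \ref{tfo1} and \ref{thmchar}: each associative quasitrivial nondecreasing $F\colon L_k^2\to L_k$ corresponds uniquely to a downward-right path $\mathcal{P}=(p_j)_{j=1}^{l}$ in $T_1$ starting at $p_1=(1,k)$, together with a binary choice ($x$ or $y$) at each diagonal segment of $\mathcal{P}$ and---when $\mathcal{P}$ ends at $(a,b)$ with $a<b$---an additional binary choice ($\textrm{Proj}_x$ vs.\ $\textrm{Proj}_y$) on $[a,b]^2$. By Proposition \ref{cor} and the discussion after the algorithm, $F$ possesses a neutral element precisely when $e=f$, i.e., when $\mathcal{P}$ actually reaches the diagonal. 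Thus $B_k$ counts the same objects as $A_k$, restricted to paths ending on $\Delta_{L_k}$.

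To turn the bijection into a closed form, I would derive a linear recurrence for each quantity by conditioning on the first move of $\mathcal{P}$ out of $p_1=(1,k)$, following the four branches of the algorithm: (i) the trivial ``no move'' branch (Case~II applied to $Q_1$) contributes $2$ operations, namely $\textrm{Proj}_x$ and $\textrm{Proj}_y$ on the whole $L_k^2$; (ii) a right move (Case~I(a)) fixes $F$ on the boundary rows/columns of $L_k^2$ touching $(1,1)$ and reduces the problem to $[2,k]^2\cong L_{k-1}$; (iii) a down move (Case~I(b)) likewise reduces to $[1,k-1]^2\cong L_{k-1}$; (iv) a diagonal move (Case~III(a) or (b)) reduces to $[2,k-1]^2\cong L_{k-2}$ and contributes a factor $2$ for the value choice at $p_1$. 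Since each sub-square is closed under a quasitrivial $F$, its restriction is independently an associative quasitrivial nondecreasing operation on a smaller chain, counted by $A_{k-1}$ or $A_{k-2}$. Summing the branches yields
\[
A_k=2A_{k-1}+2A_{k-2}+2 \qquad (k\ge 2),
\]
with $A_0=0,\ A_1=1$. The same case analysis restricted to paths that reach the diagonal drops the $+2$ coming from branch (i), giving
\[
B_k=2B_{k-1}+2B_{k-2} \qquad (k\ge 2),
\]
with $B_0=0,\ B_1=1$.

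Both recurrences share the characteristic polynomial $x^2-2x-2$, whose roots are $1\pm\sqrt{3}$. Writing $B_k=c_1(1+\sqrt{3})^k+c_2(1-\sqrt{3})^k$ and imposing the initial conditions gives $c_1=-c_2=1/(2\sqrt{3})$, yielding the stated formula for $B_k$. For the inhomogeneous recurrence a constant particular solution $-2/3$ absorbs the ``$+2$''; combined with the general homogeneous solution, the conditions $A_0=0,\ A_1=1$ determine the coefficients as $(2\pm\sqrt{3})/6$, and after a short simplification we recover the stated closed form for $A_k$.

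The principal bookkeeping care required is at the boundary: the diagonal move from $(1,k)$ only lands in $T_1$ when $k\ge 3$, and the ``no move'' branch only produces two distinct projections when $k\ge 2$; both degeneracies are absorbed cleanly by the convention $A_0=B_0=0$. A secondary subtlety is to confirm that the restriction of an associative quasitrivial nondecreasing $F$ to a sub-square $[a,b]^2$ (which is closed under $F$ by quasitriviality) is itself an associative quasitrivial nondecreasing operation on that sub-chain, and conversely that any such sub-operation combines with the border values prescribed by the chosen branch to produce a genuinely associative $F$ on $L_k$---both of which are essentially content of Theorems \ref{tfo1} and \ref{thmchar}. The small cases $k=1,2,3$, producing $A_k=1,4,12$ and $B_k=1,2,6$, serve as sanity checks for the recurrences and the final formulas.
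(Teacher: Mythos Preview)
Your proof is correct. The treatment of $B_k$ is identical to the paper's: both condition on the first move of the path to obtain $B_k=2B_{k-1}+2B_{k-2}$ and solve via the characteristic equation $x^2-2x-2=0$. For $A_k$ the paper takes a slightly different route: rather than your direct inhomogeneous recurrence $A_k=2A_{k-1}+2A_{k-2}+2$ (obtained by including the ``no move'' branch, Case~II of the Algorithm, as the constant term), the paper proves the identity $A_k=2\sum_{i=1}^k B_i-B_k$ by grouping paths according to the size of their terminal square $[a,b]^2$ and then sums the resulting geometric series for the closed form. Your approach is a bit more self-contained, since it does not route the computation of $A_k$ through $B_k$; the paper's identity, on the other hand, makes the structural link between the two sequences explicit. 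Both derivations rest on the same bijection of Theorems~\ref{tfo1} and~\ref{thmchar} and yield the same closed forms.
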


The following observations show that these numbers are related to
the downward-right path $\mathcal{P}=(p_j)_{j=1}^l$ (for some $l\le
k$) in $T_1$ starting from $(1,k)$. Let $m_{\mathcal{P}}$ be the
number of diagonal line segments $\overline{p_i,p_{i+1}} \in
\mathcal{P}$ ($i\in\{1, \dots, l-1\}$). We say that the
downward-right path $\mathcal{P}$ is {\it weighted} with weight
$2^{m_{\mathcal{P}}}$.

\begin{lemma}\label{lcount1}
\begin{enumerate}
    \item[(a)] $B_k$ is the sum of the weights of weighted paths that starts at $(1,k)$ and reaches $\Delta_{L_k}$.

    \item[(b)] $A_k+B_k$ is twice the sum of the weights of weighted paths in $T_1$ that starts at $(1,k)$ and ends at any point of $T_1$.
 \end{enumerate}
\end{lemma}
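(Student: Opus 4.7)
The plan is to read both statements directly off the bijective correspondence supplied by Theorems \ref{tfo1} and \ref{thmchar}. Recall that by those results, an associative quasitrivial nondecreasing operation $F:L_k^2\to L_k$ is fully determined by (i) a downward-right path $\mathcal{P}=(p_j)_{j=1}^l$ in $T_1$ starting at $(1,k)$ and ending at $(a,b)$ with $a\le b$ (where $a=f$, $b=e$ are the half-neutral elements), (ii) for each diagonal segment $\overline{p_i,p_{i+1}}$ an independent binary choice of whether $F(p_i)\in\{x,y\}$ equals $x$ or $y$ (which automatically fixes the value at $q_i$), and (iii), when $a<b$, a single binary choice of whether $F$ equals $\mathrm{Proj}_x$ or $\mathrm{Proj}_y$ on $[a,b]^2$. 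The horizontal/vertical moves force $F$ to be $\wedge$ or $\vee$ on $\mathcal{P}\cup\mathcal{Q}$ and on the points above or below $\mathcal{P}\cup\mathcal{Q}$, so no further choice is available. Thus the operations corresponding to a fixed path $\mathcal{P}$ ending at $(a,b)$ number exactly $2^{m_{\mathcal{P}}}$ if $a=b$ and $2\cdot 2^{m_{\mathcal{P}}}$ if $a<b$.

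For part (a), note that $F$ has a neutral element if and only if $e=f$, equivalently $a=b$, equivalently $\mathcal{P}$ terminates on $\Delta_{L_k}$. For such paths, choice (iii) is vacuous and only choice (ii) remains, so each path contributes $2^{m_{\mathcal{P}}}$ operations with a neutral element. Summing,
\begin{equation*}
B_k \;=\; \sum_{\mathcal{P}\colon (1,k)\rightsquigarrow \Delta_{L_k}} 2^{m_{\mathcal{P}}},
\end{equation*}
which is exactly the sum of the weights of the weighted paths from $(1,k)$ to the diagonal.

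For part (b), partition all relevant paths in $T_1$ according to whether they terminate on $\Delta_{L_k}$ or strictly above it. Using the counts above,
\begin{equation*}
A_k \;=\; \sum_{\mathcal{P}\colon (1,k)\rightsquigarrow \Delta_{L_k}} 2^{m_{\mathcal{P}}} \;+\; \sum_{\mathcal{P}\colon (1,k)\rightsquigarrow T_1\setminus\Delta_{L_k}} 2\cdot 2^{m_{\mathcal{P}}}.
\end{equation*}
Adding the identity for $B_k$ from part (a) doubles the diagonal-ending contribution and so unifies the two sums:
\begin{equation*}
A_k+B_k \;=\; 2\sum_{\mathcal{P}\colon (1,k)\rightsquigarrow T_1} 2^{m_{\mathcal{P}}},
\end{equation*}
which is precisely twice the total weight of all weighted downward-right paths in $T_1$ starting at $(1,k)$ and ending anywhere in $T_1$.

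The only subtle point, and thus the main thing to verify carefully, is that the correspondence $F\leftrightarrow(\mathcal{P},\text{choices})$ really is a bijection, i.e.\ that distinct $(\mathcal{P},\text{choice})$-data produce distinct operations (injectivity) and that every choice of data produces a legitimate associative quasitrivial nondecreasing operation (surjectivity). Both directions are already contained in Theorems \ref{tfo1} and \ref{thmchar} together with the remark immediately following Theorem \ref{thmchar}; once this is invoked, the bookkeeping above is immediate and no further computation is required for this lemma.
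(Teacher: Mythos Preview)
Your proof is correct and follows essentially the same approach as the paper: both invoke Theorems \ref{tfo1} and \ref{thmchar} to count, for each downward-right path $\mathcal{P}$, the operations it represents as $2^{m_{\mathcal{P}}}$ (times an extra factor $2$ when the path stops strictly above the diagonal), and then split $A_k$ according to whether the path reaches $\Delta_{L_k}$ before adding $B_k$. Your write-up is slightly more explicit about the bijection, but the argument and its structure coincide with the paper's.
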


\begin{proof}\begin{enumerate}
\item[(a)]
Applying Theorem \ref{tfo1}, it is clear that if an associative
quasitrivial nondecreasing binary operation $F$ has a neutral
element, then the downward-right path $\mathcal{P}$ defined for $F$
reaches the diagonal $\Delta_{L_k}$. By Theorem \ref{thmchar}, there
can be defined $2^{m_{\mathcal{P}}}$ different operations for a
given path $\mathcal{P}$ that reaches the diagonal, since we have a
choice in each case when the path contains a diagonal line segment.
This show the first part of the statement.
\item[(b)]
This statement follows from the fact that for any associative
quasitrivial nondecreasing operation $F$ one can define a
downward-right path which starts at $(1,k)$ and ends somewhere in
$T_1$. If its end in $(a,b)$ where $a< b$ (not on $\Delta_{L_k}$),
then $F$ is one of the projections in $[a,b]^2$, and $a$ and $b$ are
the half-neutral elements of $F$. This makes the extra 2 factor in
the statement.

Let $\Pi_1$ denote set of the weighted paths in $T_1$ that starts at
$(1,k)$ and ends at $(a,b) $ where $a< b$. Similary, $\Pi_2$ denote
the set of weighted paths that starts at $(1,k)$ and reaches
$\Delta_{L_k}$. Hence,
\begin{equation*}
%\begin{split}
    A_k
    =2\cdot\sum_{\mathcal{P}\in \Pi_1}2^{m_{\mathcal{P}}}+\sum_{\mathcal{P}\in \Pi_2}2^{m_{\mathcal{P}}} %&\#\{\textrm{weighted downward-right paths from } (1,k) \textrm{ to } (a,b), \textrm{ where }a,b\in L_k, a<b\}\\
   % +&\#\{\textrm{weighted downward-right paths from } (1,k) \textrm{ to the diagonal } \Delta_{L_k}\},
%\end{split}
\end{equation*}
%where $\#$ denote the number of the corresponding set.
  According to the (a) part
\begin{equation*}
    B_k=\sum_{\mathcal{P}\in \Pi_2}2^{m_{\mathcal{P}}}.%\#\{\textrm{weighted downward-right paths from } (1,k) \textrm{ to the diagonal } \Delta_{L_k}\}
\end{equation*} Adding these equations, we get the statement for $A_k+B_k$.
\end{enumerate}
\end{proof}
Now we present a recursive formula for $A_k$ and $B_k$.
\begin{lemma}\label{lnumb}
\begin{enumerate}
\item[(a)]
$B_1=1$, $B_2=2$ and $B_k=2\cdot B_{k-1}+2\cdot B_{k-2}$ for every
$k\ge 3$.
\item[(b)]
$A_k=2\sum_{i=1}^k B_{i}-B_k$ for every $k\in \mathbb{N}$.
\end{enumerate}
\end{lemma}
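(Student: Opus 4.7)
The strategy is to invoke Lemma~\ref{lcount1} to reinterpret $A_k$ and $B_k$ as weighted sums over downward-right paths in $T_1\subset L_k^2$ starting at $(1,k)$, and then analyze these paths by first-move decomposition.

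For part~(a), the base cases $B_1=1$ and $B_2=2$ are by direct inspection: for $k=2$ the only paths from $(1,2)$ reaching $\Delta_{L_2}$ are the single horizontal step to $(2,2)$ and the single vertical step to $(1,1)$, both of weight~$1$, and no diagonal first move is available. For $k\ge 3$, I partition the weighted paths counted by $B_k$ according to the first move out of $(1,k)$. A horizontal first move lands at $(2,k)$, and the remaining path lies in $[2,k]^2\cap T_1$; via the shift $(x,y)\mapsto (x-1,y-1)$ this region is order-isomorphic to $T_1\subset L_{k-1}^2$ with top-left corner $(1,k-1)$, contributing $B_{k-1}$ by Lemma~\ref{lcount1}(a). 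The vertical first move contributes $B_{k-1}$ by the symmetric argument. The diagonal first move (available precisely because $k\ge 3$) lands at $(2,k-1)$, carries a weight factor of $2$, and the remaining path lies in $[2,k-1]^2\cap T_1$, which is isomorphic to $T_1\subset L_{k-2}^2$, contributing $2B_{k-2}$. Summing yields $B_k=2B_{k-1}+2B_{k-2}$.

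For part~(b), Lemma~\ref{lcount1}(b) gives $A_k+B_k=2S_k$, where $S_k$ denotes the total weight of downward-right paths from $(1,k)$ ending anywhere in $T_1\subset L_k^2$; hence it suffices to establish the identity $S_k=\sum_{i=1}^k B_i$, from which $A_k=2\sum_{i=1}^k B_i-B_k$ follows. The heart of the argument is a translation-invariance observation: let $\pi_k(a,b)$ denote the sum of weights of paths from $(1,k)$ to $(a,b)$ in $T_1$. Since each admissible move strictly decreases $y-x$ (by $1$ for a right or down move, by $2$ for a diagonal one), the quantity $y-x$ is monotonically decreasing along any path, so any sequence of moves with total displacement leading to an endpoint $(a,b)$ with $a\le b$ stays in $T_1$ throughout. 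Consequently $\pi_k(a,b)$ depends only on $(a-1,k-b)$, and can be written as $\pi_k(a,b)=\varphi(a-1,k-b)$ for some fixed function $\varphi$.

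With $\varphi$ in hand the identity becomes a reindexing:
$$B_k=\sum_{a=1}^k\varphi(a-1,k-a)=\sum_{\substack{x,y\ge 0\\ x+y=k-1}}\varphi(x,y),$$
$$S_k-B_k=\sum_{1\le a<b\le k}\varphi(a-1,k-b)=\sum_{\substack{x,y\ge 0\\ x+y\le k-2}}\varphi(x,y)=\sum_{i=1}^{k-1}\,\sum_{\substack{x,y\ge 0\\ x+y=i-1}}\varphi(x,y)=\sum_{i=1}^{k-1}B_i,$$
the last equality grouping the summand by $i-1=x+y$. I expect the main obstacle to be the clean justification of the translation-invariance, which turns on verifying that no intermediate point of a path landing in $T_1$ can leave $T_1$; the monotonicity of $y-x$ makes this transparent. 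If preferred, an alternative route is to prove $S_k=1+2S_{k-1}+2S_{k-2}$ by the same first-move analysis as in part~(a) (with the ``$1$'' coming from the empty path), and then verify by induction using $B_k=2B_{k-1}+2B_{k-2}$ that $T_k:=\sum_{i=1}^k B_i$ satisfies the same recurrence with $T_1=S_1=1$ and $T_2=S_2=3$.
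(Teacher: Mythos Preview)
Your proof is correct and follows essentially the same approach as the paper. For part~(a) both you and the paper argue by first-move decomposition of the path (the paper phrases it via the Algorithm's Cases~I and~III, which amounts to the same thing). For part~(b) both reduce via Lemma~\ref{lcount1}(b) to showing $S_k=\sum_{i=1}^k B_i$, and both obtain this by grouping the weighted paths according to the anti-diagonal $b-a$ of the endpoint; your explicit translation-invariance function $\varphi$ and the reindexing make rigorous what the paper asserts in one sentence (indeed the paper's phrasing ``$b-a=s$'' appears to contain a typo and should read $b-a=k-s$, which your computation confirms). Your alternative recurrence argument for $S_k$ is a nice bonus but not needed.
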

\begin{proof}
\begin{enumerate}
    \item[(a)]  $B_1=1$, $B_2=2$ are clear.
    The recursive formula follows from the Algorithm presented in Section \ref{s4} and the definition of downward-right path $\mathcal{P}=(p_j)_{j=1}^l$.  Now we assume that $k\ge 3$. If $\overline{p_1, p_2}$ is horizontal or vertical, then Case I. (a) or (b) of the Algorithm holds (see also Figure \ref{figmove1}). Thus we reduce the square $Q_1$ of size $k$ to a square $Q_2$ of size $k-1$.  If $\overline{p_1, p_2}$ is diagonal, then Case III (a) or (b) holds (see also Figure \ref{figmove3}). Thus we reduce the square $Q_1$ of size $k$ to a square $Q_2$ of size $k-2$. By definition, the number of associative quasitrivial nondecreasing operations having neutral elements defined on a square of size $k$ is $B_k$. Thus we get that $B_k=2\cdot B_{k-1}+2\cdot B_{k-2}$.
    \item[(b)] This follows from Lemma \ref{lcount1} (b) and the fact that
    'sum of the weights of weighted paths from $(1, k)$ to any point of $T_1$' is exactly $\sum_{i=1}^k B_i$. Indeed, let $s\in\{1, \dots, k\}$ be fixed. Then $B_s$ is equal to the sum of the weights of weighted paths $\mathcal{P}$ that starts at $(1,k)$ and ends at $(a,b)$  where $b-a=s$.
\end{enumerate}
\end{proof}

%By Lemma \ref{lnumb}, we can calculate the value of $B_k$ and $A_k$.

{\it Proof of Theorem \ref{tnumb}.} We use a standard method of
second-order linear recurrence equations for the formula of Lemma
\ref{lnumb} (a). Therefore,
$$B_k=c_1\cdot(\alpha_1)^k+c_2(\alpha_2)^k,$$ where $\alpha_1,
\alpha_2$ ($\alpha_1< \alpha_2$) are the solutions of the equation
$x^2-2x-2=0$. Thus, $\alpha_1=1-\sqrt{3}, \alpha_2=1+\sqrt{3}$. By
the initial condition $B_1=1$ and $B_2=2$, we get that
$c_1=-c_2=\frac{1}{2 \sqrt{3}}$. Thus,
$$B_k=\frac{1}{2\cdot\sqrt{3}}\big((1+\sqrt{3})^k-(1-\sqrt{3})^k\big).$$

According to Lemma \ref{lnumb} (b), $A_k$ can be calculated as
$2\cdot\sum_{i=1}^k B_i-B_k$.

This provides that
\begin{equation*}
A_k=%\frac{1}{3}\big((1+\sqrt{3})^{k+1}+(1-\sqrt{3})^{k+1}-2\big)-\frac{1}{2\cdot\sqrt{3}}\big((1+\sqrt{3})^k-(1-\sqrt{3})^k\big)\\
%&=\frac{1}{6}\big(2\cdot(1+\sqrt{3})^{k+1}+2\cdot(1-\sqrt{3})^{k+1}-\sqrt{3}\cdot(1+\sqrt{3})^k+\sqrt{3}\cdot(1-\sqrt{3})^k-4\big)\\
\frac{1}{6}\big((2+\sqrt{3})(1+\sqrt{3})^k+(2-\sqrt{3})(1-\sqrt{3})^k-4\big)
\end{equation*}
\qed

Here we present a list of the first 10 value of $A_k$: $A_1=1$,
$A_2=4$, $A_3=12, A_4=34, A_5=94, A_6=258, A_7=706, A_8=1930,
A_9=5274, A_{10}=14410$.

By Theorem \ref{t2}, we get the similar results for the $n$-ary
case.
\begin{corollary}
\begin{enumerate}
    \item [(a)]
The number of associative quasitrivial nondecreasing operations
$F:L_k^n\to L_k$ ($k\in \mathbb{N}$) having neutral elements is
$$\frac{1}{2\cdot\sqrt{3}}\big((1+\sqrt{3})^k-(1-\sqrt{3})^k\big),$$
\item[(b)]
The number of associative quasitrivial nondecreasing operations
$F:L_k^n\to L_k$ ($k\in \mathbb{N}$)  is
$$\frac{1}{6}\big((2+\sqrt{3})(1+\sqrt{3})^k+(2-\sqrt{3})(1-\sqrt{3})^k-4\big).
$$
\end{enumerate}
\end{corollary}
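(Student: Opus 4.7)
The plan is to reduce the $n$-ary counting problem to the binary case already handled in Theorem \ref{tnumb}, by exhibiting a bijection that preserves both quasitriviality and the presence of a neutral element. The whole argument hinges on Theorem \ref{t2}; nothing new needs to be proved beyond checking that its bijection restricts correctly to the relevant subclasses.

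First I would recall that every quasitrivial operation is in particular idempotent (taking $x_1=\dots=x_n=x$ in the definition forces $F(n\cdot x)=x$), so Theorem \ref{t2} applies and yields a bijection
\[
\Phi\colon F\longmapsto G,\qquad G(a,b)=F(a,(n-1)\cdot b),
\]
between associative idempotent nondecreasing $n$-ary operations on $L_k$ and associative idempotent nondecreasing binary operations on $L_k$, with inverse given by the derivation formula \eqref{eqjdef}.

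Next I would verify that $\Phi$ restricts to a bijection on each of the two subclasses. For quasitriviality: the forward direction is Remark \ref{rem2}; for the converse, if $G$ is quasitrivial then a trivial induction on $n$ shows that the iterated product $x_1\circ x_2\circ\cdots\circ x_n$ lies in $\{x_1,\dots,x_n\}$, so the derived $F$ is quasitrivial. For the neutral element property: if $G$ has neutral element $e$, then repeatedly applying $G(e,x)=x$ and $G(x,e)=x$ inside the expression \eqref{eqjdef} gives $F((i-1)\cdot e, x,(n-i)\cdot e)=x$ for every $i$. Conversely, if $F$ has neutral element $e$, then from $F(e,\dots,e,x)=x$ and the derivation formula one obtains $G(e,x)=G(G(\cdots G(e,e)\cdots,e),x)=x$ (using $G(e,e)=e$ by idempotency), and symmetrically $G(x,e)=x$. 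Thus $\Phi$ preserves quasitriviality in both directions and preserves the existence (and identity) of a neutral element.

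With the bijection established, both parts of the corollary follow at once: the $n$-ary operations counted in (a) and (b) are in one-to-one correspondence with the binary operations counted by $B_k$ and $A_k$ respectively in Theorem \ref{tnumb}, and the explicit closed forms are read off from that theorem. I do not expect any genuine obstacle; the only mildly non-automatic step is the transfer of the neutral-element property from $G$ back to $F$ and vice versa, and this reduces to an elementary unfolding of \eqref{eqjdef}.
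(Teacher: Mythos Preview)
Your proposal is correct and follows the same approach as the paper, which simply invokes Theorem \ref{t2} to transfer the binary counts of Theorem \ref{tnumb} to the $n$-ary setting without spelling out the details. Your explicit verification that the reduction preserves quasitriviality and the existence of a neutral element is more careful than the paper's one-line justification, but the underlying idea is identical.
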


\begin{proposition}\label{binumb}
Let $C_k$ denote number of bisymmetric quasitrivial nondecreasing
binary operations defined in $L_k$ and $D_k$ denote the number of
bisymmetric quasitrivial nondecreasing binary operations having
neutral elements. Then
$$D_k=2^{k-1},$$
$$C_k=3\cdot 2^{k-1}-2.$$
\end{proposition}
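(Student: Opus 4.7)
The plan is to count both quantities by enumerating the downward-right paths that classify the operations via Theorem \ref{tbfo1}, noting that the only degree of freedom beyond the path itself is a binary choice (of $\text{Proj}_x$ versus $\text{Proj}_y$ on $[a,b]^2$) which is available precisely when the endpoint $(a,b)$ of the path satisfies $a<b$.

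For $D_k$, the shortest route is to combine the previously established equivalences: Lemma \ref{lba}(a) gives that a bisymmetric operation with a neutral element is associative and symmetric, while Lemma \ref{lba}(c) shows the converse holds for associative symmetric operations. Proposition \ref{prop:ane} further asserts that every associative quasitrivial nondecreasing symmetric operation on $L_k$ already possesses a neutral element. Thus $D_k$ equals the number of associative quasitrivial nondecreasing symmetric operations on $L_k$, which by Corollary \ref{ccard} is $2^{k-1}$.

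For $C_k$, I would first count the total number $N_k$ of H/V downward-right paths in $T_1$ starting at $(1,k)$ and ending at any $(a,b)$ with $a \le b$. A path from $(1,k)$ to $(a,b)$ uses exactly $a-1$ horizontal and $k-b$ vertical moves, of total length $(a-1)+(k-b) \le k-1$; moreover after $t$ such moves we are at $(1+r,k-d)$ with $r+d=t$, and the constraint $1+r \le k-d$ reduces to $t \le k-1$, so \emph{every} H/V path of length at most $k-1$ automatically stays in $T_1$. With the substitution $i=a-1$, $j=k-b$ this gives
$$
N_k \;=\; \sum_{\substack{i,j\ge 0\\ i+j\le k-1}}\binom{i+j}{i}
\;=\;\sum_{s=0}^{k-1}2^{s}\;=\;2^{k}-1.
$$
Among these, the paths ending on $\Delta_{L_k}$ all have length exactly $k-1$ and, by the previous observation, every $\{H,V\}$-sequence of length $k-1$ produces such a path, giving $2^{k-1}$ of them (recovering the $D_k$ count). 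Hence there are $N_k-2^{k-1}=2^{k-1}-1$ paths ending at off-diagonal points, each contributing two operations via the $\text{Proj}_x/\text{Proj}_y$ choice. Putting this together,
$$
C_k \;=\; 2^{k-1} + 2\,(2^{k-1}-1) \;=\; 3\cdot 2^{k-1}-2.
$$

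There is essentially no obstacle beyond the basic bookkeeping. The one point to verify carefully is the $T_1$-containment of every H/V path of length $\le k-1$ (so that the unconstrained binomial count is the correct one), which is immediate from the length bound above; and the fact that the two projection choices on $[a,b]^2$ really yield two distinct operations when $a<b$, which is part of the characterization in Theorem \ref{tbfo1}.
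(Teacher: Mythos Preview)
Your proof is correct and follows essentially the same approach as the paper. For $D_k$ you use exactly the paper's argument (Lemma~\ref{lba}, Proposition~\ref{prop:ane}, Corollary~\ref{ccard}); for $C_k$ the paper simply invokes the analogue of Lemma~\ref{lnumb}(b) to write $C_k=2\sum_{i=1}^k D_i - D_k=2(2^k-1)-2^{k-1}$, while you compute the same total path count $N_k=\sum_{s=0}^{k-1}2^s=2^k-1$ directly via binomials and then split into diagonal versus off-diagonal endpoints---the underlying decomposition $C_k=2N_k-D_k$ is identical.
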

\begin{proof}
\begin{enumerate}
\item[(a)]
By Lemma \ref{lba} and Proposition \ref{prop:ane}, bisymmetric quasitrivial nondecreasing binary operations having neutral elements defined on $L_k$ are exactly the associative quasitrivial symmetric nondecreasing binary operations. Thus by Corollary \ref{ccard}, we get that $D_k=2^{k-1}$.%a symmetric and associative operation is bisymmetric. According to Proposition \ref{prop:ane}, a quasitrivial symmetric nondecreasing operation defined on a finite chain has a neutral element. Thus, .
%Hence, the statement follows from Corollary \ref{ccard}.
\item[(b)]
Same argument as in Lemma \ref{lnumb}(b) shows that
$C_k=2\sum_{i=1}^k{D_i}-D_k$. Using this we get that $C_k=2\cdot
(2^{k}-1)-2^{k-1}=3\cdot 2^{k-1}-2.$
\end{enumerate}
\end{proof}

\begin{remark}
During the finalization of this paper the author have been informed
that  Miguel Couceiro, Jimmy Devillet and Jean-Luc Marichal found an
alternative and independent approach for similar estimations in
their upcoming paper \cite{Jimmy}.
\end{remark}

\section{Open problems and further perspectives}\label{s7}
First we summarize the most important results of our paper. In this
article we introduced a geometric interpretation of quasitrivial
nondecreasing associative binary operations. We gave a
characterization of such operations on finite chains using
downward-right paths. Combining this with a reducibility argument we
provided characterization for the $n$-ary analogue of the problem.
As a remarkable application of our visualization method we gave
characterization of bisymmetric quasitrivial nondecreasing binary
operation on finite chains. As a byproduct of our argument we
estimated the number of operations belonging to these classes.

These results initiate the following open problems.
\begin{enumerate}
    \item Characterize the $n$-ary bisymmetric quasitrivial nondecreasing operations. If these operations are also associative, then we can apply reducibility to deduce a characterization for them. On the other hand if $n\ge 3$, then not all of such operations are associative as the following example shows. Let $F\colon X^n\to X$ ($n\ge 3$) be the projection on the $i^{th}$ coordinate where $i$ is neither 1 or $n$. Then it is easy to show that it is bisymmetric quasitrivial nondecreasing but not associative. %The original construction of the author was much complicated, this example was found independently by Jimmy Devillet and the anonymous referee.
    \item Find a visual characterization of associative idempotent nondecreasing operations. Quasitrivial operations are automatically idempotent. Since idempotent operations are essentially important in fuzzy logic, this problem has its own interest.
\end{enumerate}

%It is an open question how can we characterize bisymmetric quasitrivial $n$-ary operations for $n\ge 3$ either with or without the assumption of nondecreasingness.

\section*{Acknowledgements} The author would like to thank Jimmy Devillet and the anonymous referee for the example given in the first open problem in Section 7.
This research is supported by the internal research project
R-AGR-0500 of the University of Luxembourg. The author was partially
supported by the Hungarian Scientific Research Fund (OTKA) K104178.

\section*{Appendix}
This section is devoted to prove the analogue of Corollary
\ref{cnem}. As it was already mentioned in Remark \ref{rnem}, the
proof is just a slight modification of the proof of \cite[Theorem
3.2]{KS}. The difference is based on the following easy lemma.
\begin{lemma}\label{lASP}
Let $X$ be a chain and $F: X^n\to X$ be an associative monotone
operation. Then $F$ is non-decreasing in the first and the last
variable.
\end{lemma}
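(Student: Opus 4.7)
The plan is to argue by contradiction, using associativity to pit two incompatible monotonicity types against each other. Since monotonicity of $F$ means nondecreasing or nonincreasing in each variable, it suffices to rule out the possibility that $F$ is genuinely nonincreasing (that is, nonincreasing but not constant, and hence not nondecreasing) in the first or in the last variable.

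For the last variable I would use the associativity identity with $i=n-1$, namely
\begin{equation*}
F(x_1,\ldots,x_{n-2},F(x_{n-1},\ldots,x_{2n-2}),x_{2n-1}) \;=\; F(x_1,\ldots,x_{n-1},F(x_n,\ldots,x_{2n-1})),
\end{equation*}
and regard both sides as functions of $x_{2n-1}$ with all other $2n-2$ entries fixed. On the left-hand side, $x_{2n-1}$ sits only in the last slot of the outer $F$, so the LHS is nonincreasing in $x_{2n-1}$. On the right-hand side, $x_{2n-1}$ sits in the last slot of the inner $F$ (nonincreasing) and the value of that inner $F$ then feeds into the last slot of the outer $F$ (again nonincreasing); the composition of two nonincreasing functions is nondecreasing, so the RHS is nondecreasing in $x_{2n-1}$. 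Since the two sides are equal, both must be constant in $x_{2n-1}$. Taking $z=F(x_{n-1},\ldots,x_{2n-2})$ this yields that $F(x_1,\ldots,x_{n-2},z,x_{2n-1})$ is independent of $x_{2n-1}$ for every $z\in\mathrm{Im}(F)$ and every choice of the remaining arguments.

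To promote the conclusion from $z\in\mathrm{Im}(F)$ to arbitrary $z\in X$ I would sandwich $z$ between two nearby image values. Using monotonicity of $F$ in the $(n-1)$-st slot together with the other associativity identities (which let us represent other slots by image values, and hence set up the sandwich), one squeezes the one-variable function $x_{2n-1}\mapsto F(\ldots,z,x_{2n-1})$ between two functions that are constant in $x_{2n-1}$. A nonincreasing function trapped between two constants must itself be constant, which contradicts the assumption that $F$ is strictly nonincreasing (not merely constant) in the last variable. Hence $F$ is nondecreasing in its last variable.

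The claim for the first variable follows from the mirror-image argument, using the $i=1$ identity
\begin{equation*}
F(F(x_1,\ldots,x_n),x_{n+1},\ldots,x_{2n-1}) \;=\; F(x_1,F(x_2,\ldots,x_{n+1}),x_{n+2},\ldots,x_{2n-1}),
\end{equation*}
viewed as functions of $x_1$; the sign count is identical. The main obstacle will be the sandwich step: without quasitriviality the image of $F$ need not exhaust $X$, and one has to track how image sets compose under nested applications of $F$ so as to confirm that every $z\in X$ really does lie between two image values. This is precisely where the argument modifies the proof of \cite[Theorem 3.2]{KS}.
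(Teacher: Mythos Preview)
Your underlying observation --- that when a variable passes through the same extreme slot of two nested copies of $F$, the resulting one-variable map is nondecreasing regardless of the sign of $F$'s monotonicity in that slot --- is exactly the mechanism the paper uses. The difference is that the paper does not argue by contradiction. It fixes $x_2,\dots,x_{2n-1}$, sets
\[
h(x)=F\bigl(F(x,x_2,\dots,x_n),\,x_{n+1},\dots,x_{2n-1}\bigr),
\]
observes that $h$ is nondecreasing (two applications of $F$ with $x$ in the first slot compose to a nondecreasing map whichever direction $F$ has there), and then rewrites $h$ via the $i=1$ identity as $h(x)=F\bigl(x,\,F(x_2,\dots,x_{n+1}),\,x_{n+2},\dots,x_{2n-1}\bigr)$. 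This already exhibits $F$ as nondecreasing in its first argument, and the paper stops there; the last-variable case is symmetric.

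Your contradiction framing is precisely what forces the sandwich step you flag as the main obstacle, and you do not close it. In the direct argument there is nothing to sandwich: one lands on ``$x\mapsto F(x,a_2,\dots,a_n)$ is nondecreasing'' outright, rather than on a constancy statement that must then be promoted from $z\in\mathrm{Im}(F)$ to arbitrary $z\in X$. It is true that in both arguments the entry $a_2$ a priori ranges only over $\mathrm{Im}(F)$, but the paper does not dwell on this, and in the lemma's sole application $F$ is quasitrivial, so $\mathrm{Im}(F)=X$ anyway. In short, drop the contradiction and the sandwich: the same associativity identity you wrote down already gives the conclusion directly.
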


\begin{proof}
The argument for the first and for the last variable is similar. We
just consider it for the first variable. From the definition of
associativity it is clear that an associative operation $F\colon X^n
\to X$ is satisfies
\begin{equation}\label{ASP}
    \begin{split}
    &F(F(x_1,\dots, x_n),x_{n+1}, \dots,  x_{2n-1})= \\ & F(x_1, F(x_{2}, \dots, x_{n+1}), x_{n+2}, \dots, x_{2n-1}).
    \end{split}
    \end{equation}
    for every $x_1, \dots, x_{2n-1}\in X$.
Now let us fix $x_2, \dots, x_{2n-1}\in X$  and define $$h(x)=F(F(x,
x_2,\dots ,x_n),x_{n+1}, \dots, x_{2n-1}).$$ The operation $F$ is
monotonic in the first variable thus it is clear that $h(x)$ is
nondecreasing, since we apply $F$ twice when $x$ is in the first
variable. Then using \eqref{ASP} we get that $F$ must be
nondecreasing in the first variable.
\end{proof}

As it was also mentioned in \cite{KS} the following condition is an
easy application of \cite[Theorem 1.4]{A} using the statement
therein for $A_2=\emptyset$.

\begin{theorem}\label{thmAkk}
Let $X$ be an arbitrary set. Suppose $F:X^n\to X$ be a quasitrivial
associative operation. If $F$ is not derived from a binary operation
$G$, then $n$ is odd and there exist $b_1 , b_2$ $(b_1\ne b_2)$ such
that for any $a_1, \dots, a_n \in \{b_1, b_2\}$
\begin{equation}\label{eqbk}
    F(a_1, \dots, a_n)=b_i~~(i=\{1,2\}),
\end{equation} where $b_i$ occurs odd number of times. %\red{itt nem kell feltenni, hogy $b_1 \ne b_2$?, bar ez kesobb jon}
\end{theorem}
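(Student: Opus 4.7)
The plan is to derive the statement from the cited characterization \cite[Theorem~1.4]{A} of quasitrivial associative $n$-ary operations. That result describes every such $F$ by a pair of data $(G, A_2)$, where $G$ is a binary operation from which $F$ is (partially) derived and $A_2 \subseteq X$ is an exceptional set governing the non-reducible behaviour. The special case $A_2 = \emptyset$ isolates operations which are either genuinely reducible to $G$ or behave on a distinguished two-element subset as a parity operation returning the ``majority'' symbol.

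First I would recall the relevant portion of \cite[Theorem~1.4]{A}, making the parameters $G$ and $A_2$ explicit. Next, using the hypothesis that $F$ is not derived from any binary operation $G$, I would rule out the reducible branch of the classification. Since no further structure on $X$ is assumed, the only remaining possibility is the exceptional branch specialized to $A_2 = \emptyset$: this yields two distinct elements $b_1, b_2 \in X$ together with the prescription that, on $\{b_1, b_2\}^n$, the value of $F(a_1,\dots,a_n)$ equals whichever of $b_1, b_2$ occurs an odd number of times among the arguments.

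It remains to observe why $n$ must be odd. If $n$ were even, one could not consistently assign a ``majority'' among two symbols in an $n$-tuple (ties with $n/2$ copies of each occur), so the parity prescription would not even be total. Equivalently, associativity together with quasitriviality on $\{b_1,b_2\}^n$ forces $n$ odd: iterating $F$ under the parity rule on long blocks of $b_1$'s and $b_2$'s would otherwise contradict the associativity equation in Definition~\ref{dbase}, since one bracketing would deliver $b_1$ while another delivers $b_2$. I expect the verification of this compatibility between parity behaviour and associativity to be the only genuinely non-trivial step; the rest is direct extraction from the cited dichotomy.
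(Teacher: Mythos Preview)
Your approach matches the paper's exactly: the paper does not give an independent proof of this statement at all, but simply records it as ``an easy application of \cite[Theorem 1.4]{A} using the statement therein for $A_2=\emptyset$'' and then uses it. Your plan to invoke Ackerman's classification and specialize to $A_2=\emptyset$ is precisely that. The additional paragraph you sketch about why $n$ must be odd is not something the paper supplies; presumably the oddness of $n$ is already part of the conclusion of \cite[Theorem 1.4]{A} in the relevant branch, so you need not re-derive it.
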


\begin{proposition}\label{paqm}
Let $X$ be a totally ordered set and let $F\colon X^n\to X$ be an
associative, quasitrivial, monotone operation. Then $F$ is
reducible.
\end{proposition}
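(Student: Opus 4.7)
The plan is to argue by contradiction using the structural classification provided by Theorem \ref{thmAkk} together with the ``nondecreasing in the first variable'' guarantee of Lemma \ref{lASP}. Suppose $F$ is not reducible. Then by Theorem \ref{thmAkk}, $n$ is odd and there exist distinct $b_1, b_2 \in X$ such that, for every tuple $(a_1,\dots,a_n) \in \{b_1,b_2\}^n$, the value $F(a_1,\dots,a_n)$ equals whichever of $b_1, b_2$ occurs an odd number of times among the $a_j$. Since $X$ is totally ordered, I may assume without loss of generality that $b_1 < b_2$.

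The core of the argument is to exhibit two tuples differing only in the first coordinate whose $F$-values strictly decrease, contradicting Lemma \ref{lASP} (which guarantees that $F$ is nondecreasing in its first variable). Consider the tuples
\[
\bigl(b_1,\; b_2,\; (n{-}2)\!\cdot\! b_1\bigr) \quad\text{and}\quad \bigl(b_2,\; b_2,\; (n{-}2)\!\cdot\! b_1\bigr).
\]
In the first tuple, $b_2$ occurs exactly once (odd) and $b_1$ occurs $n-1$ times (even, since $n$ is odd); hence the classification forces $F(b_1, b_2, (n{-}2)\!\cdot\! b_1) = b_2$. In the second tuple, $b_2$ occurs twice (even) and $b_1$ occurs $n-2$ times (odd, again because $n$ is odd); hence $F(b_2, b_2, (n{-}2)\!\cdot\! b_1) = b_1$. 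Since $b_1 < b_2$, this is a strict decrease in the first coordinate, contradicting Lemma \ref{lASP}.

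The only step requiring any care is checking the parities, which hinges on $n$ being odd, and this is precisely the hypothesis Theorem \ref{thmAkk} hands us in the non-reducible case. No genuine obstacle arises: once Lemma \ref{lASP} is in hand, the contradiction is immediate because the ``parity rule'' from Theorem \ref{thmAkk} is incompatible with any monotonicity in a single coordinate (flipping one coordinate from $b_1$ to $b_2$ always flips the parity, hence the output). Thus $F$ must be reducible, completing the proof.
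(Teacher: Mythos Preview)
Your proof is correct and follows essentially the same strategy as the paper: assume non-reducibility, invoke Theorem~\ref{thmAkk} to get the parity rule on $\{b_1,b_2\}^n$, and then derive a contradiction with Lemma~\ref{lASP}. The only cosmetic difference is in the witnessing tuples: the paper compares $(n\cdot b_1)$, $(b_2,(n-1)\cdot b_1)$, and $(b_2,(n-2)\cdot b_1,b_2)$ using nondecreasingness in both the first and last variables, whereas you flip only the first coordinate between $(b_1,b_2,(n-2)\cdot b_1)$ and $(b_2,b_2,(n-2)\cdot b_1)$---a slightly more economical choice, but the same idea.
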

\begin{proof}
According to Theorem \ref{thmAkk}, if $F$ is not reducible, then $n$
is odd. Hence $n\ge 3$ and there exist $b_1, b_2$ satisfying
equation \eqref{eqbk}.
Since $b_1\ne b_2$, %, otherwise $b_1=b_2$ is a neutral element with respect to $F$. Moreover
we may assume that $b_1<b_2$ (the case $b_2<b_1$ can be handled
similarly). By the assumption \eqref{eqbk} for $b_1$ and $b_2$ we
have
\begin{equation}\label{eq:b1b2}
F(n \cdot b_1)=b_1, ~ F(b_2, (n-1) \cdot b_1)=b_2, ~ F(b_2,
(n-2)\cdot b_1, b_2)=b_1.
\end{equation}
By Lemma \ref{lASP}, $F$ is nondecreasing in the first and the last
variable. Thus we have $$F(n \cdot b_1) \le F(b_2, (n-1) \cdot
b_1)\le F(b_2, (n-2)\cdot b_1, b_2).$$ This implies $b_1=b_2$, a
contradiction.
\end{proof}

The following was proved as \cite[Corollary 4.9]{KS}.
\begin{corollary}\label{cormain}
Let $X$ be a nonempty chain and $n\ge 2$ be an integer. An
associative, idempotent, monotone operation $F:X^n\to X$ is
reducible if and only if
$F$ is nondecreasing.% \red{itt az van, hogy a ketvaltozosra belattuk ezt, es hivatkozni kene ra?}
\end{corollary}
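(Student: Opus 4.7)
The plan is to prove the two directions of the biconditional separately. The backward direction (nondecreasingness implies reducibility) is immediate from Theorem~\ref{t2}, which asserts precisely that every associative, idempotent, nondecreasing $n$-ary operation on a chain is derived from a (unique) binary operation.

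For the forward direction, I would assume $F$ is reducible and fix a binary associative operation $\circ$ with $F(x_1,\ldots,x_n)=x_1\circ x_2\circ\cdots\circ x_n$. By Lemma~\ref{lASP}, $F$ is already nondecreasing in its first and last variables, so the goal is to upgrade this to nondecreasingness in every variable. I would do this by showing that $\circ$ itself is nondecreasing in both arguments, after which the conclusion follows by composition, since $F$ is just an iterated $\circ$.

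To show that $\circ$ is nondecreasing in its first argument, it suffices to exhibit, for each $y\in X$, elements $a_2,\ldots,a_n\in X$ with $a_2\circ\cdots\circ a_n=y$: granted this, the map $x\mapsto x\circ y=F(x,a_2,\ldots,a_n)$ is nondecreasing by Lemma~\ref{lASP}. The witness I would use is $a_2=\cdots=a_{n-1}=y^{(n-1)}$ and $a_n=y$, writing $y^{(k)}$ for the $k$-fold $\circ$-power of $y$. The key identity is $(y^{(n-1)})^{(n-2)}=y^{(n-1)}$, proved as follows: the idempotency of $F$ reads $y^{(n)}=y$, which implies $y^{(k+n-1)}=y^{(k)}$ for all $k\ge 1$, so the sequence $(y^{(k)})_{k\ge 1}$ is periodic with period dividing $n-1$, whence $y^{((n-2)(n-1))}=y^{(n-1)}$. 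Once the identity is in hand, $a_2\circ\cdots\circ a_n=(y^{(n-1)})^{(n-2)}\circ y=y^{(n-1)}\circ y=y^{(n)}=y$, as required. A symmetric argument, applied to the last variable of $F$ instead of the first, gives nondecreasingness of $\circ$ in its second argument.

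The main obstacle is the periodicity identity $(y^{(n-1)})^{(n-2)}=y^{(n-1)}$; this is where the idempotency of $F$ is really used, closing the orbit of $y$ under $\circ$-powers. Everything else is routine: Lemma~\ref{lASP}, associativity of $\circ$, and a straightforward inductive composition argument passing from nondecreasingness of $\circ$ to nondecreasingness of $F$ in every variable. For $n=2$ the entire argument collapses to Lemma~\ref{lASP} alone, since the representation $y=a_2$ is trivial and $F$ itself coincides with $\circ$.
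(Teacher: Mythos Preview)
The paper does not give its own proof of Corollary~\ref{cormain}; it is quoted from \cite[Corollary~4.9]{KS} and used as a black box in the Appendix to derive Corollary~\ref{cnemh}. So there is no in-paper argument to compare against, and your proposal should be judged on its own.

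Your argument is correct. The backward direction is exactly Theorem~\ref{t2}. For the forward direction, the strategy of proving that the underlying binary operation $\circ$ is nondecreasing---by realising each map $x\mapsto x\circ y$ as a first-variable section $F(x,a_2,\ldots,a_n)$ and then invoking Lemma~\ref{lASP}---is sound, and once $\circ$ is nondecreasing the nondecreasingness of $F$ in every coordinate follows by iterated composition. The periodicity identity $(y^{(n-1)})^{(n-2)}=y^{((n-1)(n-2))}=y^{(n-1)}$ is correctly derived from the idempotency relation $y^{(n)}=y$ (so that $y^{(k+(n-1))}=y^{(k)}$ for all $k\ge 1$).

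One small simplification you might note: the periodicity detour is not needed. For $n\ge 3$ take instead $a_2=y\circ y$ and $a_3=\cdots=a_n=y$; then
\[
a_2\circ a_3\circ\cdots\circ a_n \;=\; y^{(2)}\circ y^{(n-2)} \;=\; y^{(n)} \;=\; y,
\]
so $F(x,\,y\circ y,\,y,\ldots,y)=x\circ y$ directly, and Lemma~\ref{lASP} gives the desired monotonicity in one stroke. This is logically equivalent to your construction but avoids computing $(y^{(n-1)})^{(n-2)}$.
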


Using Proposition \ref{paqm} and Corollary \ref{cormain} we get the
statement.

\begin{corollary}\label{cnemh}
Let $n\ge 2 \in \mathbb{N}$ be given, $X$ be a nonempty chain and
$F:X^n \to X$ be an associative quasitrivial operation. $$F \textrm{
is monotone } \Longleftrightarrow F \textrm{ is nondecreasing}.$$
\end{corollary}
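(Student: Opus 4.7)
The ($\Leftarrow$) direction is immediate, since nondecreasingness in every variable is a particular case of monotonicity in every variable. The plan is therefore to establish the ($\Rightarrow$) direction by bundling together the three preparatory results already stated in the Appendix.

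First, observe that quasitriviality implies idempotency: indeed, $F(n\cdot x) \in \{x\}$ forces $F(n\cdot x) = x$ for every $x \in X$. Hence an associative, quasitrivial, monotone operation $F\colon X^n \to X$ is in particular an associative, idempotent, monotone operation.

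Now apply Proposition \ref{paqm} to $F$. Since $F$ is associative, quasitrivial, and monotone, it follows that $F$ is reducible, i.e.\ derived from some binary operation $G$ in the sense of Definition \ref{defred}.

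At this point one feeds the three attributes ``associative, idempotent, monotone'' plus reducibility into Corollary \ref{cormain}: that corollary says that, among associative idempotent monotone operations on a nonempty chain, the reducible ones are precisely the nondecreasing ones. Consequently $F$ is nondecreasing, which is the conclusion sought. No step in this chain requires any new calculation beyond what the Appendix has already supplied; the only conceptual obstacle would be if quasitriviality did not force idempotency (which it does), or if Proposition \ref{paqm} failed to hand us reducibility (which it does not). Thus the proof is essentially a two-line concatenation: quasitrivial $\Rightarrow$ idempotent, and then Proposition \ref{paqm} followed by Corollary \ref{cormain}.
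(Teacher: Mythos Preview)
Your proof is correct and follows exactly the route the paper intends: apply Proposition~\ref{paqm} to obtain reducibility, then feed this into Corollary~\ref{cormain} (using that quasitriviality implies idempotency) to conclude nondecreasingness. The paper's own proof is the one-line remark ``Using Proposition~\ref{paqm} and Corollary~\ref{cormain} we get the statement,'' so your write-up simply unpacks this with the necessary bookkeeping.
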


\end{document}